\providecommand{\noopsort}[1]{} 
\newtheorem{thm}{Theorem}[section]
\newtheorem{lem}[thm]{Lemma}
\newtheorem{prop}[thm]{Proposition}
\newtheorem{defn}[thm]{Definition}
\newtheorem{remark}[thm]{Remark}
\def\la{{\lambda}}
\newcommand{\N}{\mathbb N}
\newcommand{\Z}{\mathbb Z}
\newcommand{\R}{\mathbb R}
\newcommand{\C}{\mathbb C}
\renewcommand{\S}{\mathcal{S}}
\renewcommand{\H}{\mathcal{H}}
\renewcommand{\L}{\mathcal{L}}
\newcommand{\mB}{\mathcal{B}}
\newcommand{\mW}{\mathfrak{W}}
\renewcommand{\O}{O}
\def\Re{{\mathrm{Re}\,}}
\def\Im{{\mathrm{Im}\,}}
\newcommand{\supnorm}[1]{\norm{ #1 }_\infty}
\newcommand{\absnorm}[1]{\norm{ #1 }_1}
\newcommand{\norm}[1]{\left\| #1 \right\|}
\newcommand{\nrm}[2]{\left\|#1\right\|_{#2}}
\newcommand{\tnrm}[2]{\|#1\|_{#2}}
\newcommand{\vect}[2]{\begin{pmatrix}#1\\#2\end{pmatrix}}
\newcommand{\dom}{\textnormal{dom}}
\newcommand{\supp}{\operatorname{supp}}
\newcommand{\Tr}{\operatorname{Tr}}
\begin{document}

\date{\today}

\title{Spectral shift for relative Schatten class perturbations}

\author[van Nuland]{Teun D.H. van Nuland$^{*}$}
\address{T.v.N., Institute for Mathematics, Astrophysics and Particle Physics, Radboud University, Heyendaalseweg 135, 6525 AJ Nijmegen, The Netherlands}
\email{t.vannuland@math.ru.nl}

\thanks{\footnotesize $^{*}$Research supported by NWO Physics Projectruimte (680-91-101)}

\author[Skripka]{Anna Skripka$^{**}$}
\address{A.S., Department of Mathematics and Statistics, University of New Mexico, 311 Terrace Street NE, Albuquerque, NM  87106, USA}
\email{skripka@math.unm.edu}

\thanks{\footnotesize $^{**}$Research supported in part by NSF grant DMS-1554456}

\subjclass[2000]{Primary 47A56, 47B10}

\keywords{Relative Schatten class perturbation, trace formula, multiple operator integral, spectral shift}

\begin{abstract}
We affirmatively settle the question on existence of a real-valued higher order spectral shift function for a pair of self-adjoint operators $H$ and $V$ such that $V$ is bounded and $V(H-iI)^{-1}$ belongs to a Schatten-von Neumann ideal $\S^n$ of compact operators in a separable Hilbert space. We also show that the function satisfies the same trace formula as in the known case of $V\in\S^n$ and that it is unique up to a polynomial summand of order $n-1$. Our result significantly advances earlier partial results where counterparts of the spectral shift function for noncompact perturbations lacked real-valuedness and aforementioned uniqueness as well as appeared in more complicated trace formulas for much more restrictive sets of functions.
Our result applies to models arising in noncommutative geometry and mathematical physics.
\end{abstract}

\maketitle

\section{Introduction}
\label{sec1}

The spectral shift function originates from the foundational work of M.G.~Krein \cite{Krein53} which followed I.M.~Lifshits's physics research summarised in \cite{Lifshits}.
It is a central object in perturbation theory that allows to approximate a perturbed operator function by the unperturbed one while controlling noncommutativity in the remainder.
In 1984, Koplienko \cite{Koplienko84} suggested an interesting and useful generalization by considering higher order Taylor remainders and conjecturing existence of higher order spectral shift functions. Many partial results were obtained in that direction, but they were confined to either lower order approximations, weakened trace functionals and representations, or compact perturbations.
This paper closes a gap between theory and applications, where perturbations are often noncompact, by proving existence of a higher order spectral shift function under a general condition on a weighted resolvent of the initial operator and obtaining bounds and properties stricter than previously known.

Our prime result is that, given a self-adjoint operator $H$ densely defined in a separable Hilbert space $\H$ and a bounded self-adjoint operator $V$ on $\H$ satisfying
\begin{align}
\label{vresinsn}
V(H-iI)^{-1}\in\S^n,
\end{align}
there exists a real-valued spectral shift function $\eta_n=\eta_{n,H,V}$ of order $n$. Namely, the trace formula
\begin{align}
\label{trvinsn}
\Tr\left(f(H+V)-\sum_{k=0}^{n-1}\frac{1}{k!}\frac{d^k}{dt^k} f(H+tV)\big|_{t=0}\right)
=\int_\R f^{(n)}(x)\,\eta_n(x)\,dx
\end{align}
holds for a wide class of functions $f$ and the function $\eta_n$ satisfies suitable uniqueness and summability properties and bounds detailed below.
The {\it relative Schatten class condition} \eqref{vresinsn} applies, in particular, to
\begin{enumerate}[(I)]
	\item\label{vinsn} $V\in\S^n;$
	\item\label{rinsn} $(H-iI)^{-1}\in\S^n$;
	\item\label{inner perturbations} inner fluctuations of $H=D$ in a regular locally compact spectral triple $(\mathcal{A},\H,D)$ (see Section \ref{sec4a});
	\item\label{diff operators} differential operators on manifolds perturbed by multiplication operators (see Section \ref{sec4b}).
\end{enumerate}

To prove our main result, we develop new techniques, which were also applied in the subsequent work \cite{vNvS21a} in the setting \eqref{rinsn} to resolve analytical issues occurring in the study of the spectral action in noncommutative geometry. The latter application suggests that our techniques can be used to substantially generalize \cite{vNvS21a,vNvS21b} as well as can be useful in other problems of noncommutative geometry.

\medskip
\paragraph{\bf New and prior results.}

Under the assumption \eqref{vinsn}, the problem on existence of higher order spectral shift functions was resolved in \cite{PSS13}.
More precisely, \eqref{trvinsn} was established in \cite{Krein53}, \cite{Koplienko84}, \cite{PSS13} for $n=1$, $n=2$, $n\ge 3$, respectively, for important test functions $f$ (see, e.g., \cite[Section 5.5]{ST19} for details), where the function $\eta_n=\eta_{n,H,V}$ is unique, real-valued, and satisfies the bound
\begin{align*}
\|\eta_n\|_1\le c_n\|V\|_n^n.
\end{align*}

Taylor approximations and respective trace formulas were also derived in the study of the spectral action functional $\Tr(f(H))$ occuring in noncommutative geometry \cite{CC97} for operators $H$ with compact resolvent $(H-iI)^{-1}$. The case of \eqref{rinsn} and functions $f$ in the form $f(x)=g(x^2)$, where $g$ is the Laplace transform of a regular Borel measure, was handled in \cite{WvS}. The case of compact $(H-iI)^{-1}$ and
$f\in C_c^{n+1}(\R)$ was handled in \cite{S14,S18}. In particular, the existence of a locally integrable spectral shift function was established in \cite{S18}.

In our main result, Theorem \ref{rsmain}, given $n\in\N$ and $H,V$ satisfying \eqref{vresinsn}, we establish the existence of a real-valued function $\eta_n=\eta_{n,H,V}$ such that $\eta_n\in L^1\big(\R,\tfrac{dx}{(1+|x|)^{n+\epsilon}}\big)$ for every $\epsilon>0$ and such that \eqref{trvinsn} holds for every $f\in\mW_n$, where the class $\mW_n$ is given by Definition \ref{fracw}. In particular, $\mW_n$ includes all $(n+1)$-times continuously differentiable functions whose derivatives decay at infinity at the rate $f^{(k)}(x)=\O\left(|x|^{-k-\alpha}\right)$, $k=0,\ldots,n+1$, for some $\alpha>\frac12$ (see Proposition \ref{inclusions}\eqref{inclusionsi}). The weighted $L^1$-norm of the spectral shift function $\eta_n$ admits the bound
\begin{align*}
\int_\R|\eta_n(x)|\,\frac{dx}{(1+|x|)^{n+\epsilon}}
\le c_n(1+\epsilon^{-1})(1+\norm{V})\|V(H-iI)^{-1}\|_n^n
\end{align*}
for every $\epsilon>0$. Moreover, the locally integrable spectral shift function $\eta_n$ is unique up to a polynomial summand of degree at most $n-1$.

Below we briefly summarize advantages of our main result in comparison to most relevant prior results.
Other results on approximation of operator functions and omitted details can be found in \cite[Chapter 5]{ST19} and references cited therein.

The existence of a real-valued function $\eta_1\in L^1\big(\R,\tfrac{dx}{1+x^2}\big)$ satisfying the trace formula \eqref{trvinsn} with $n=1$ for bounded rational functions was established in \cite[Theorem~3]{Krein62} (see also \cite[p.~48, Corollary 0.9.5]{Yafaev10}). The formula \eqref{trvinsn} was extended to twice-differentiable $f$ with bounded $f',f''$ such that
\begin{align}
\label{yafaevcond}
\frac{d^k}{dx^k}(f(x)-c_f x^{-1})
=\O(|x|^{-k-1-\epsilon})\quad{\rm as }\,\;|x|\rightarrow\infty,\quad k=0,1,2,\quad \epsilon>0,
\end{align}
where $c_f$ is a constant, in \cite[p.~47, Theorem 0.9.4]{Yafaev10}. It was shown in \cite[Section 8.8 (3)]{YGT} that $\eta_1\in L^1\big(\R,\frac{dx}{(1+|x|)^{1+\epsilon}}\big)$ for $\epsilon>0$.
The respective function $\eta_1$ was determined by \eqref{trvinsn} uniquely up to a constant summand.
We prove that \eqref{trvinsn} with $n=1$ holds for all $\mathfrak{W}_1$, which contains all functions satisfying \eqref{yafaevcond} (see Proposition \ref{inclusions}\eqref{inclusionsi}) as well as functions not included in \eqref{yafaevcond} (see, e.g., Remark \ref{mncontains}).

In \cite[Corollary 3.7]{Neidhardt88}, the trace formula \eqref{trvinsn} with $n=2$ and real-valued $\eta_2\in L^1\big(\R,\tfrac{dx}{(1+x^2)^2}\big)$ was proved for a set of functions including Schwartz functions
along with ${\rm span}\,\{(z-\cdot)^{-k}:\, \Im(z)\neq 0,\, k\in\N,\, k\ge 2\}$.
The respective $\eta_2\in L^1\big(\R,\tfrac{dx}{(1+x^2)^2}\big)$ was determined by \eqref{trvinsn} uniquely up to a linear summand.
We prove that \eqref{trvinsn} with $n=2$ holds for all $f\in\mW_2$, which contains the functions $(z-\cdot)^{-1}$, $\Im(z)\neq 0$ not included in \cite[Corollary 3.7]{Neidhardt88} and the Schwartz functions included in \cite[Corollary 3.7]{Neidhardt88}, and that
$\eta_2$ is integrable with a significantly smaller weight, namely, $\eta_2\in L^1\big(\R,\frac{dx}{(1+|x|)^{2+\epsilon}}\big)$ for $\epsilon>0$.

Let $n\ge 2$. The existence of a complex-valued $\tilde\eta_n\in L^1\big(\R,\tfrac{dx}{(1+x^2)^{n/2}}\big)$ satisfying the trace formula
\begin{align}
\label{trcs18}
\Tr\Big(f(H+V)-\sum_{k=0}^{n-1}\frac{1}{k!}\frac{d^k}{dt^k}f(H+tV)|_{t=0}\Big)
=\int_\R\frac{d^{n-1}}{dx^{n-1}}\big((x-i)^{2n}f'(x)\big)\tilde\eta_n(x)\,dx
\end{align}
for a set of functions $f$ including ${\rm span}\,\{(z-\,\cdot)^{-k},\; \Im(z)>0,\; k\in\N,\; k\ge 2n\}$ was established in \cite[Theorem 4.6]{CS18} (see also \cite[Remark 4.8(ii)]{CS18}). The weighted $L^1$-norm of $\tilde\eta_n$ satisfies the bound
\begin{align*}
\int_\R|\tilde\eta_n(x)|\,\frac{dx}{(1+x^2)^{\frac{n}{2}}}
\le c_n(1+\|V\|)^{n-1}\|V(H-iI)^{-1}\|_n^n.
\end{align*}
As distinct from the aforementioned result of \cite{CS18} for $n\ge 2$, the function $\eta_n$ in our main result is real-valued and satisfies the  simpler trace formula \eqref{trvinsn} for the larger class $\mW_n$ of functions $f$ described in terms of familiar function classes. Moreover, the set of functions $\mW_n$ is large enough to ensure the uniqueness of $\eta_n$ up to a polynomial term of degree at most $n-1$.
\medskip

Other assumptions on $H$ and $V$, each having its merits and limitations, were also considered in the literature. For instance, the existence of a nonnegative function $\eta_2=\eta_{2,H,V}\in L^1\big(\R,\tfrac{dx}{(1+x^2)^\gamma}\big)$, $\gamma>1/2$, satisfying the trace formula \eqref{trvinsn} with $n=2$ for bounded rational functions $f$ was established in \cite[Theorem 2]{Koplienko84} under the assumption $V|H-iI|^{-\frac12}\in\S^2$. A more relaxed condition $(H+V-iI)^{-1}-(H-iI)^{-1}\in\S^n$ was traded off for a more restrictive set of functions $f$ and, when $n\ge 2$, for more complicated trace formulas where both the left and right hand sides of \eqref{trvinsn} are modified. The respective results for $n=1$ can be found in \cite[Theorem 3]{Krein62} and \cite[Theorem 2.2]{Yafaev05}; for $n=2$ in \cite[Theorem 3.5, Corollary 3.6]{Neidhardt88}; for $n\ge 2$ in \cite[Theorem 3.5]{PSS15} and \cite{S17}.

\medskip

\paragraph{\bf Methods.}
The major technical tools and novelty of our approach are briefly discussed below.

The technical scheme leading to the representation \eqref{trvinsn} under the assumption \eqref{vresinsn} is more subtle
than the one under the assumption \eqref{vinsn}. The derivatives and Taylor approximations of operator functions are known to be expressible in terms of multiple operator integrals (see Theorems \ref{dm} and \ref{rm}). The prime technique to handle these multiple operator integrals (see Theorem \ref{thm:PSS Higher Order}) only applies to compact perturbations satisfying \eqref{vinsn}. To bridge the gap between existing results for \eqref{vinsn} and our setting \eqref{vresinsn} we impose suitable weights on the perturbations and involve multi-stage approximation arguments for functions and perturbations.

In Theorem~\ref{thm:adding resolvents} we create Schatten class perturbations out of relative Schatten class perturbations \eqref{vresinsn} inside a multiple operator integral whose integrand is the $n$th order divided difference $f^{[n]}$ of a function $f\in C^n(\R)$ satisfying the properties $f^{(k)}(x)=o(|x|^{-k})$ as $|x|\rightarrow\infty$, $k=0,\ldots,n$, and $\widehat{f^{(n)}}\in L^1(\R)$.

Our Theorem \ref{thm:adding resolvents} significantly generalizes and extends earlier attempts in that direction made in \cite[Lemma 3.6]{S14}, \cite[Proposition 2.7]{S18}, \cite[Lemma 4.1]{CS18}.

The proof of Theorem \ref{thm:adding resolvents} involves the introduction of novel function classes (see Definition \ref{fracw}, \eqref{mBn}, and \eqref{mbn}), approximation arguments (see Lemma \ref{lem:density}), and analysis of multilinear operator integrals.

Based on the aforementioned results and analysis of distributions,
in Proposition \ref{prop:SSM with growth} we establish the trace formula
\begin{align}
\label{trexist}
\Tr\left(f(H+V)-\sum_{k=0}^{n-1}\frac{1}{k!}\frac{d^k}{dt^k} f(H+tV)\big|_{t=0}\right)
=\int_\R f^{(n)}(x)\,d\mu_n(x)
\end{align}
for every $f\in\mW_n$, where $\mu_n$ is a Borel measure determined uniquely up to an absolutely continuous term whose density is a polynomial of degree at most $n-1$ and such that for every $\epsilon>0$ the measure $(x-i)^{-n-\epsilon}\,d\mu_n(x)$ is finite and satisfies
\begin{align}
\|(\cdot-i)^{-n-\epsilon}\,d\mu_n\|\le c_n\,(1+\epsilon^{-1})(1+\norm{V})\nrm{V(H-iI)^{-1}}{n}^n.
\end{align}

In order to obtain absolute continuity of $\mu_n$ (and hence obtain a spectral shift \textit{function}) we apply the change of variables provided by Theorem~\ref{thm:adding resolvents} again, this time to multiple operator integrals of order $n-1$. This entails new terms for which the trace is defined only when perturbations satisfy additional summability requirements.
We establish an auxiliary result for finite rank perturbations in Proposition \ref{prop36} and then extend it
to relative Schatten class perturbations appearing in our main result with help of two new approximation results, one for operators obtained in Lemma \ref{lem:approximating V by V_k} and the other for Taylor remainders obtained in Lemma \ref{prop:|Tr(R)|}. In order to apply those approximation results, in Lemma \ref{lem:pth part of remainder} we derive a new representation for the remainder of the Taylor approximation of $f(H+V)$ in terms of handy components that are continuous in $V$ in a very strong sense.

In order to strengthen \eqref{trexist}, in Proposition \ref{prop:SSF locally} we establish another weaker version of \eqref{trvinsn} for $f\in C_c^{n+1}(\R)$, where on the left hand side we have a certain component of the Taylor remainder and on the right hand side in place of $f$ we have its product with some complex weight. By combining advantages of the results of Propositions \ref{prop:SSM with growth} and \ref{prop:SSF locally} we derive the trace formula \eqref{trvinsn}.
\medskip

\paragraph{\bf Examples.}

The relative Schatten class condition \eqref{vresinsn} arises in noncommutative geometry; see, for instance, \cite{WvS,SZ18}.
In that setting, $H$ is a generalized Dirac operator occuring in a (possibly non-unital) spectral triple and $V$ a generalized vector potential \cite[Section IV.1]{Connes}, which is also known as an inner fluctuation or Connes' differential one-form \cite{CC97,WvS}. For unital spectral triples, the condition \eqref{rinsn}, which is known as finite summability, is often assumed. For non-unital spectral triples, conditions similar to \eqref{inner perturbations} are discussed in Section \ref{sec4a}. Both in the unital and non-unital case, it is important to relax assumptions on the function $f$ appearing in the spectral action \cite{CC97} since that function might be prescribed by the model \cite{CCS}. Sometimes it is impossible or at least inconvenient to assume that $f$ is  given by a Laplace transform, as it was done in \cite{WvS}, and a general class of functions considered in this paper is more beneficial.

The condition \eqref{vresinsn} is also satisfied by many Dirac as well as random and deterministic Schr\"{o}dinger operators $H$ with $L^p$-potentials $V$. Appearance of such operators in problems of mathematical physics is discussed in, for instance, \cite{S21,Yafaev10} and references cited therein.
Sufficient conditions for \eqref{vresinsn} are discussed in Section \ref{sec4}.

\section{Notations}

\label{intro}
Let $\H$ be a separable Hilbert space, $\mB(\H)$ the $C^*$-algebra of all bounded linear operators on $\H$, and $\mB(\H)_{\text{sa}}$ the subset of all self-adjoint operators in $\mB(\H)$. For $p\in[1,\infty)$ we denote the respective Schatten-von Neumann ideal of compact operators on $\H$ by $\S^p$ and briefly call it the Schatten $p$-class. Basic properties of Schatten-von Neumann ideals can be found in, for instance, \cite{Simon05,ST19}. In some cases it will also be convenient to denote $\S^\infty:=\mB(\H)$.

Let $\N$ denote the positive natural numbers and let $n\in\N$.
When $H$ is a self-adjoint operator densely defined in $\H$, we briefly write
{\it $H$ is a self-adjoint operator in $\H$}.
Given a self-adjoint operator $H$ in $\H$ and $V\in\mB(\H)$, we denote
$$\tilde{V}:=V(H-iI)^{-1}.$$
Throughout the paper we will also use the notations $$u(\lambda):=\lambda-i$$ and $u^{-k}(x):=(u(x))^{-k}$. If $H_0,\ldots,H_m$ are self-adjoint operators in $\H$, and $V_1,\ldots,V_m$ are bounded operators, we denote
$$\tilde{V}_j:=V_ju^{-1}(H_j)=V_j(H_j-iI)^{-1}.$$

Given two Banach spaces $\mathcal{X}$ and $\mathcal{Y}$, let $\mB(\mathcal{X},\mathcal{Y})$ denote the Banach space of all bounded linear operators mapping $\mathcal{X}$ to $\mathcal{Y}$. For $T\in\mB(\mathcal{X},\mathcal{Y})$, we denote its norm by $\|T\|_{\mathcal{X}\to\mathcal{Y}}$.

We denote positive constants by letters $c,C$ with superscripts indicating dependence on their parameters. For instance, the symbol $c_\alpha$ denotes a constant depending only on the parameter $\alpha$.

\paragraph{\bf Function Spaces.}\label{sct:a new function space}
Let $C_0=C_0(\R)$ denote the space of continuous functions on $\R$ decaying to $0$ at infinity, $C_c=C_c(\R)$ the space of compactly supported continuous functions on $\R$, $C_c^n$ the class of $n$ times continuously differentiable functions in $C_c$, and $C^n_c[-a,a]$ the class of functions in $C_c^n$ whose support is contained in $[-a,a]$.
Let $C_b^n$ denote the subset of $C^n$ of such $f$ for which $f^{(n)}$ is bounded and let $C_0^n$ denote the subset of $C^n$ of such $f$ for which $f^{(n)}\in C_0(\R)$. We write $f(x)=\O(g(x))$ if there exists $M>0$ such that $|f(x)|\leq Mg(x)$ for all $x$ outside a compact set. We write $f(x)=o(g(x))$ if for all $\epsilon>0$, we have $|f(x)|\leq\epsilon g(x)$ for all $x$ outside a compact set depending on $\epsilon$.

Let $L^p$ denote the space of measurable $f$ for which $|f|^p$ is Lebesgue integrable on $\R$ equipped with the standard norm $\|f\|_p=\|f\|_{L^p}:=(\int_\R|f(x)|^p\,dx)^{1/p}$, $1\le p<\infty$, and let $L^\infty$ denote the space of essentially bounded functions on $\R$ equipped with the ${\rm ess\,sup}$ norm $\|\cdot\|_\infty$.
Let $L^1_{\text{loc}}$ denote the space of functions locally integrable on $\R$ equipped with the seminorms $f\mapsto \int_{-a}^a|f(x)|\,dx$, $a>0$. By $\ell^p(L^2(\R^d))$, where $p\ge 1$, we denote the space of functions consisting of those measurable functions $f:\R^d\to\C$ for which
\begin{align}
\label{lLdef}
\|f\|_{\ell^p(L^2(\R^d))}^p
:=\sum_{k\in\Z^d}\Big(\int\limits_{(0,1)^d+k}|f(x)|^2\,dx\Big)^{\frac{p}{2}}
<\infty.
\end{align}

Whenever we write $\hat{f}\in L^1$, it is implicitly assumed that $f\in C_0\subseteq\S'$, in order to define the Fourier transform. This can be done without loss of generality by the Riemann-Lebesgue lemma.

We recall that the divided difference\index{divided difference $f^{[n]}$} of the zeroth order $f^{[0]}$ is the function $f$ itself. Let $\la_0,\la_1,\dots,\la_n$ be points in $\R$ and let $f\in C^n(\R)$. The divided difference $f^{[n]}$ of order $n$ is defined recursively by
\begin{align*}
f^{[n]}(\la_0,\dots,\la_n)
=\lim\limits_{\la\rightarrow\la_n}\frac{f^{[n-1]}(\la_0,\dots,\la_{n-2},\la)
-f^{[n-1]}(\la_0,\dots,\la_{n-2},\la_{n-1})}{\la-\la_{n-1}}.
\end{align*}

\section{Auxiliary technical results}

In this section we set a technical foundation for the proof of our main result.

\subsection{New function classes}
\label{sec2}

In this subsection we introduce a new class of functions $\mW_n$, for which our main result holds, along with auxiliary classes $\mathfrak{B}_n$ and $\mathfrak{b}_n$ and derive their properties.

\begin{defn}
\label{fracw}
Let $\mathfrak{W}_n$ denote the set of functions $f\in C^n(\R)$ such that
\begin{enumerate}[(i)]
\item $\widehat{f^{(k)}u^k}\in L^1(\R),~k=0,\ldots,n$,
\item\label{fracwii} $f^{(k)}\in L^1\big(\R,(1+|x|)^{k-1}\,dx\big)$, $k=1,\ldots,n$.
\end{enumerate}
\end{defn}

The following sufficient condition for integrability of the Fourier transform of a function is a standard exercise and, thus, its proof is omitted.

\begin{lem}\label{lem:FT in L1}
If $f\in L^2(\R)\cap C^1(\R)$ and $f'\in L^2(\R)$, then $\hat{f}\in L^1(\R)$.
\end{lem}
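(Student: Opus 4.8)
The plan is to show $\hat f \in L^1(\R)$ by splitting the integral $\int_\R |\hat f(\xi)|\,d\xi$ over the unit ball $\{|\xi|\le 1\}$ and its complement, and estimating each piece via Cauchy--Schwarz against a weight that is square-integrable on the relevant region. First I would recall that since $f \in L^2(\R)$, the Plancherel theorem gives $\hat f \in L^2(\R)$; and since $f' \in L^2(\R)$, the identity $\widehat{f'}(\xi) = i\xi\,\hat f(\xi)$ (valid for $f \in C^1 \cap L^2$ with $f' \in L^2$, after checking $f$ is absolutely continuous and the boundary terms vanish) shows that $\xi \mapsto \xi\,\hat f(\xi)$ is also in $L^2(\R)$.

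For the low-frequency part, I would write
\begin{align*}
\int_{|\xi|\le 1} |\hat f(\xi)|\,d\xi
\le \Big(\int_{|\xi|\le 1} 1\,d\xi\Big)^{1/2}\Big(\int_{|\xi|\le 1}|\hat f(\xi)|^2\,d\xi\Big)^{1/2}
\le \sqrt 2\,\|\hat f\|_2 = \sqrt 2\,\|f\|_2 < \infty.
\end{align*}
For the high-frequency part, I would insert the weight $|\xi|^{-1}$:
\begin{align*}
\int_{|\xi|> 1} |\hat f(\xi)|\,d\xi
= \int_{|\xi|>1} \frac{1}{|\xi|}\,\big||\xi|\hat f(\xi)\big|\,d\xi
\le \Big(\int_{|\xi|>1}\frac{d\xi}{\xi^2}\Big)^{1/2}\Big(\int_{|\xi|>1}|\xi\hat f(\xi)|^2\,d\xi\Big)^{1/2},
\end{align*}
and here $\int_{|\xi|>1}\xi^{-2}\,d\xi = 2 < \infty$ while the second factor is bounded by $\|\widehat{f'}\|_2 = \|f'\|_2 < \infty$. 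Adding the two bounds gives $\|\hat f\|_1 \le \sqrt 2\big(\|f\|_2 + \|f'\|_2\big) < \infty$, which is the claim.

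The only genuine point requiring care — the ``main obstacle'', though it is minor — is justifying the Fourier-transform identity $\widehat{f'} = i\xi\,\hat f$ for functions in this class rather than for Schwartz functions: one needs that $f \in C^1(\R) \cap L^2(\R)$ with $f' \in L^2(\R)$ forces $f$ to vanish at $\pm\infty$ (so that integration by parts in the tempered-distribution pairing, or an approximation argument, produces no boundary contribution). This follows since $f(x) = f(0) + \int_0^x f'(t)\,dt$ with $f' \in L^2$ makes $f$ of at most square-root growth, while $f \in L^2$ then forces $f(x)\to 0$ along a sequence, and $f' \in L^2 \subseteq L^1_{\mathrm{loc}}$ controls oscillation enough to upgrade this to a genuine limit; alternatively one simply interprets all identities distributionally, where $\widehat{f'} = i\xi\,\hat f$ holds automatically. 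Everything else is the routine Cauchy--Schwarz splitting above.
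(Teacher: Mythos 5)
Your proof is correct and is precisely the standard low/high frequency Cauchy--Schwarz splitting against $|\xi|^{-1}$, using Plancherel and the identity $\widehat{f'}(\xi)=i\xi\hat f(\xi)$ (which holds because the classical derivative of a $C^1$ function agrees with its distributional derivative, so no boundary-term argument is really needed). The paper explicitly omits the proof of this lemma as a standard exercise, so there is nothing to compare against; your argument is the expected one.
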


\begin{prop}
\label{inclusions}
Let $n\in\N$. Then, the following assertions hold.
\begin{enumerate}[(i)]
\item\label{inclusionsi}
For every $\alpha>\frac12$,
\begin{align*}
\mW_n\supseteq\left\{f\in C^{n+1}:~ f^{(k)}(x)=\O\left(|x|^{-k-\alpha}\right)
\text{ as } |x|\to\infty,\;k=0,\ldots,n+1\right\}.
\end{align*}

\item\label{inclusionsii}
Furthermore,
\begin{align*}
\mW_n\subseteq\left\{f\in C^n:\;f^{(k)},\widehat{f^{(k)}}\in L^1(\R),\; k=1,\ldots,n\right\}.
\end{align*}
\end{enumerate}
\end{prop}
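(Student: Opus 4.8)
The plan is to establish both inclusions by directly unwinding Definition \ref{fracw}. For part (i) I verify the three defining conditions of $\mW_n$ for a function $f\in C^{n+1}$ satisfying $f^{(k)}(x)=\O(|x|^{-k-\alpha})$ as $|x|\to\infty$, $k=0,\dots,n+1$; for part (ii) I read off the $L^1$-statements from the weighted integrability condition together with a short argument on the Fourier side.

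For part (i), the first condition of Definition \ref{fracw} is immediate: $f(x)=\O(|x|^{-\alpha})\to0$ gives $f\in C_0(\R)$, while $f^{(n)}(x)=\O(|x|^{-n-\alpha})=o(|x|^{-n})$ because $\alpha>0$. For the second condition, observe that for $k=1,\dots,n$ the function $x\mapsto f^{(k)}(x)(1+|x|)^{k-1}$ is $\O(|x|^{-1-\alpha})$ at infinity, hence integrable there since $1+\alpha>1$, and it is continuous and therefore locally bounded; thus $f^{(k)}\in L^1\big(\R,(1+|x|)^{k-1}\,dx\big)$. For the third condition I would apply Lemma \ref{lem:FT in L1} to $g:=f^{(n)}$: since $f\in C^{n+1}$ we have $g\in C^1(\R)$, and the bounds $f^{(n)}(x)=\O(|x|^{-n-\alpha})$ and $f^{(n+1)}(x)=\O(|x|^{-n-1-\alpha})$ together with $n\ge1$ force $g,g'\in L^2(\R)$ (the tails $|x|^{-2n-2\alpha}$ and $|x|^{-2n-2-2\alpha}$ are integrable at infinity and $g,g'$ are locally bounded). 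Lemma \ref{lem:FT in L1} then yields $\widehat{f^{(n)}}\in L^1(\R)$, so $f\in\mW_n$.

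For part (ii), let $f\in\mW_n$. Since $(1+|x|)^{k-1}\ge1$ for $k\ge1$, the second condition of Definition \ref{fracw} immediately gives $f^{(k)}\in L^1(\R)$ for $k=1,\dots,n$. To see that $\widehat{f^{(k)}}\in L^1(\R)$ for these $k$, note that the case $k=n$ is the third condition, so fix $k<n$. Because $f\in C_0(\R)\subseteq\S'(\R)$ and each $f^{(j)}$, $j=1,\dots,n$, lies in $L^1(\R)$, the identity $\widehat{f^{(n)}}(\xi)=(i\xi)^{n-k}\,\widehat{f^{(k)}}(\xi)$ holds as tempered distributions, hence for almost every $\xi$ and, by continuity of $\widehat{f^{(k)}}$, for all $\xi\ne0$. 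I would then split $\int_\R|\widehat{f^{(k)}}|$: on $\{|\xi|\le1\}$ the Riemann--Lebesgue lemma (applicable since $f^{(k)}\in L^1$) makes $\widehat{f^{(k)}}$ continuous and bounded, hence integrable there; on $\{|\xi|\ge1\}$ one has $|\widehat{f^{(k)}}(\xi)|=|\xi|^{-(n-k)}|\widehat{f^{(n)}}(\xi)|\le|\widehat{f^{(n)}}(\xi)|$, which is integrable by the third condition of Definition \ref{fracw}. Thus $\widehat{f^{(k)}}\in L^1(\R)$.

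Both arguments are essentially bookkeeping with orders of decay, and no step presents a serious obstacle; the only non-formal ingredients are the appeal to Lemma \ref{lem:FT in L1} in part (i) (equivalently, that $L^2$-decay of a function and its derivative forces the Fourier transform into $L^1$) and the near-zero/far-from-zero splitting of $\widehat{f^{(k)}}$ in part (ii).
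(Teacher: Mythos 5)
Your proof of part~(i) is essentially the paper's: check the three defining conditions directly, with Lemma~\ref{lem:FT in L1} supplying $\widehat{f^{(n)}}\in L^1$. Your part~(ii) is also correct, but it takes a genuinely different and more direct route than the paper. The paper first proves the intermediate statement $f^{(k)}u^{k}\in C_0$ for $k=1,\dots,n-1$ (by integrating the defining bound on $g'=(f^{(k-1)}u^{k-1})'$ and ruling out a nonzero limit), infers $f^{(k)}\in L^\infty\cap L^1\subseteq L^2$, and then applies Lemma~\ref{lem:FT in L1} again. You instead invoke the Fourier-multiplier identity $\widehat{f^{(n)}}(\xi)=(i\xi)^{n-k}\widehat{f^{(k)}}(\xi)$ (valid as tempered distributions since $f\in C^n$ with $f^{(k)},f^{(n)}\in L^1$, and then pointwise by continuity of both sides) and split $\int|\widehat{f^{(k)}}|$ at $|\xi|=1$: Riemann--Lebesgue handles the compact piece, and the multiplier bound $|\widehat{f^{(k)}}|\le|\widehat{f^{(n)}}|$ for $|\xi|\ge1$ handles the tail.

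Your version is shorter and uses strictly less of Definition~\ref{fracw} — only $f^{(k)}\in L^1$ for $k\le n$ and $\widehat{f^{(n)}}\in L^1$, not the full weighted integrability in condition (ii). The trade-off is that the paper's detour proves the auxiliary fact labeled \eqref{fuc0}, namely $f^{(k)}u^{k}\in C_0$, which is not merely a stepping stone here: it is cited again to establish $\mW_n\subset\mathfrak{B}_n$ and reused in the proof of Proposition~\ref{prop:SSM with growth} (the boundary terms \eqref{partbounds} in the integration by parts). So for the narrow statement of Proposition~\ref{inclusions}\eqref{inclusionsii} your argument is a clean improvement, but the paper's longer route amortizes its cost by producing a fact it needs downstream. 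Both proofs are valid.
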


\begin{proof}
The inclusion in (i) is straightforward, as it follows from Lemma \ref{lem:FT in L1}.

(ii) The properties $f^{(k)}\in L^1(\R),$ $k=1,\ldots,n$ follow immediately from the definition of $\mW_n$. To prove $\widehat{f^{(k)}}\in L^1(\R)$, $k=1,\dots,n$, firstly we note that
\begin{align*}
\widehat{u^{-k}}\in L^1,\quad k=1,\ldots n
\end{align*}
by Lemma \ref{lem:FT in L1}. By the convolution theorem we find
\begin{align*}
\widehat{f^{(k)}}=\widehat{f^{(k)}u^k}*\widehat{u^{-k}},\quad k=1,\ldots,n,
\end{align*}
which is in $L^1$ because $L^1$ is closed under the convolution product. Therefore, the proof of (ii) is complete.
\end{proof}

\begin{remark}
\label{mncontains}
It follows from Proposition \ref{inclusions}\eqref{inclusionsi} that
$\mW_n$ contains all bounded rational functions except for linear combinations with constant functions, which are trivial in the context of our paper. In particular, $\mW_n$ contains the space ${\rm span}\,\{(z-\,\cdot)^{-k},\; \Im(z)>0,\; k\in\N,\; k\ge 2n\}$ considered in \cite{CS18}. In addition, $\mW_n$ contains all Schwartz functions and every $f\in C^{n+1}$ such that $f(x)=|x|^{-\alpha}$ outside a bounded neighborhood of zero for some $\alpha>\frac12$.
\end{remark}

We will need the auxiliary function classes
\begin{align}
\label{mBn}
\mathfrak{B}_n:=&\left\{f\in C^n:~f^{(k)}u^k\in C_0(\R),~k=0,\ldots,n,~\widehat{f^{(n)}}\in L^1(\R)\right\}
\end{align}
and
\begin{align}
\label{mbn}
\mathfrak{b}_n:=&\left\{f\in \mathfrak{B}_n:~\widehat{f^{(p)}u^p}\in L^1(\R),~p=0,\ldots,n\right\}.
\end{align}
It follows from Definition \ref{fracw} and Proposition \ref{inclusions}\eqref{inclusionsii} that
\begin{align*}
\mW_n\subset\mathfrak{B}_n.
\end{align*}
We also have the following result relating $\mathfrak{b}_n$ and $\mathfrak{B}_n$.

\begin{lem}\label{lem:density}
	The space $\mathfrak{b}_n$ is dense in $\mathfrak{B}_n$ with respect to the norm
		$$\norm{f}_{\mathfrak{B}_n}:=\sum_{p=0}^n\supnorm{f^{(p)}u^p}+\absnorm{\widehat{f^{(n)}}}.$$
\end{lem}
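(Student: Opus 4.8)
The plan is to prove the slightly stronger statement that $C_c^\infty(\R)$ is $\norm{\cdot}_{\mathfrak{B}_n}$-dense in $\mathfrak{B}_n$; this suffices because $C_c^\infty(\R)\subseteq\mathfrak{b}_n$, since any $g\in C_c^\infty(\R)\subseteq\mathcal{S}(\R)$ has $g^{(p)}u^p\in\mathcal{S}(\R)\subseteq C_0(\R)$ and $\widehat{g^{(p)}u^p}\in\mathcal{S}(\R)\subseteq L^1(\R)$ for every $p$. Fix $f\in\mathfrak{B}_n$, and approximate it by combining a smooth compactly supported cut-off with a mollification. Choose $\chi\in C_c^\infty(\R)$ with $\chi\equiv1$ on a neighbourhood of $[-1,1]$ and $\supp\chi\subseteq[-2,2]$, put $\chi_R:=\chi(\cdot/R)$; choose $\psi\in C_c^\infty(\R)$ with $\int_\R\psi=1$ and $\supp\psi\subseteq[-1,1]$, put $\psi_\epsilon:=\epsilon^{-1}\psi(\cdot/\epsilon)$; and set
\[
g_{R,\epsilon}:=(f\chi_R)*\psi_\epsilon,\qquad g_\epsilon:=f*\psi_\epsilon.
\]
Since $f\chi_R\in C_c^n$ and $\psi_\epsilon\in C_c^\infty$, the convolution $g_{R,\epsilon}$ belongs to $C_c^\infty(\R)$. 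The proof then reduces to two limits: $\norm{g_\epsilon-f}_{\mathfrak{B}_n}\to0$ as $\epsilon\downarrow0$, and $\norm{g_{R,\epsilon}-g_\epsilon}_{\mathfrak{B}_n}\to0$ as $R\to\infty$ for each fixed $\epsilon$. Granting these, for $\delta>0$ one first picks $\epsilon\in(0,1]$ with $\norm{g_\epsilon-f}_{\mathfrak{B}_n}<\delta/2$ and then $R=R(\epsilon)$ with $\norm{g_{R,\epsilon}-g_\epsilon}_{\mathfrak{B}_n}<\delta/2$, so that $g_{R,\epsilon}\in\mathfrak{b}_n$ and $\norm{g_{R,\epsilon}-f}_{\mathfrak{B}_n}<\delta$.

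For the mollification limit, I would control the weighted sup-norms by writing, for $p\le n$, $u^p(x)(g_\epsilon^{(p)}(x)-f^{(p)}(x))=\int_{|y|\le\epsilon}u^p(x)(f^{(p)}(x-y)-f^{(p)}(x))\psi_\epsilon(y)\,dy$ and estimating $|u^p(x)|\,|f^{(p)}(x-y)-f^{(p)}(x)|$ by comparing $|u^p(x)|$ with $|u^p(x-y)|$: for $|x|$ large it is dominated by a multiple of $\sup_{|z|\ge|x|-\epsilon}|f^{(p)}u^p(z)|+|f^{(p)}u^p(x)|$, which is small because $f^{(p)}u^p\in C_0(\R)$, and for $|x|$ in a fixed compact set it is small by uniform continuity of $f^{(p)}$; hence $\supnorm{g_\epsilon^{(p)}u^p-f^{(p)}u^p}\to0$. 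The Fourier term is immediate: $\widehat{g_\epsilon^{(n)}}-\widehat{f^{(n)}}=\widehat{f^{(n)}}(\widehat{\psi_\epsilon}-1)$ with $\widehat{\psi_\epsilon}(\xi)=\widehat\psi(\epsilon\xi)$, $\|\widehat\psi\|_\infty\le1$, $\widehat\psi(0)=1$, so $\absnorm{\widehat{g_\epsilon^{(n)}}-\widehat{f^{(n)}}}\to0$ by dominated convergence (here $\widehat{f^{(n)}}\in L^1$ is used). For the cut-off limit, I would use that $(f\chi_R)^{(p)}-f^{(p)}=f^{(p)}(\chi_R-1)+\sum_{j<p}\binom{p}{j}f^{(j)}\chi_R^{(p-j)}$ is supported in $\{|x|\ge R\}$, and that on this set, using $|f^{(j)}u^j|\le\sup_{|z|\ge R}|f^{(j)}u^j(z)|=:\omega_j(R)\to0$ together with the scaling $\supnorm{\chi_R^{(p-j)}}\le C R^{-(p-j)}$ and $\supp\chi_R^{(p-j)}\subseteq\{R\le|x|\le2R\}$, one gets $|((f\chi_R)^{(p)}-f^{(p)})(x)|\le\rho_p(R)(1+|x|)^{-p}$ with $\rho_p(R)\to0$; since for $\epsilon\le1$ the points where the convolution with $\psi_\epsilon$ is nonzero have $|x|\gtrsim R$, the ratio $|u^p(x)|/|u^p(x-y)|$ stays bounded and $\supnorm{g_{R,\epsilon}^{(p)}u^p-g_\epsilon^{(p)}u^p}\to0$ as $R\to\infty$. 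For the corresponding Fourier term, $\widehat{g_{R,\epsilon}^{(n)}}-\widehat{g_\epsilon^{(n)}}=\widehat{(f\chi_R)^{(n)}-f^{(n)}}\,\widehat{\psi_\epsilon}$, and one splits $(f\chi_R)^{(n)}-f^{(n)}$ the same way: the piece $f^{(n)}(\chi_R-1)$ contributes at most $\|\widehat\psi\|_\infty\absnorm{\widehat{f^{(n)}\chi_R}-\widehat{f^{(n)}}}$, which tends to $0$ because $\widehat{\chi_R}$ is an approximate identity and $\widehat{f^{(n)}}\in L^1$, whereas each piece $f^{(j)}\chi_R^{(n-j)}$ ($j<n$) contributes at most $\absnorm{f^{(j)}\chi_R^{(n-j)}}\,\absnorm{\widehat{\psi_\epsilon}}\le C\,\omega_j(R)R^{-(n-1)}\epsilon^{-1}\to0$ as $R\to\infty$ with $\epsilon$ fixed.

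The main obstacle is that neither operation alone keeps us inside $\mathfrak{b}_n$: a pure cut-off $f\chi_R$ is only $C_c^n$, so $\widehat{(f\chi_R)^{(n)}}$ and $\widehat{(f\chi_R)^{(p)}u^p}$ need not be integrable, while a pure mollification $f*\psi_\epsilon$ does not improve the Fourier-integrability of $f^{(p)}u^p$ either, because multiplication by the polynomial $u^p$ does not commute with convolution and $\widehat{f^{(p)}u^p}$ is a priori only a tempered distribution; one must combine the two to reach $C_c^\infty(\R)$. Once this is done, the real work is in the two quantitative limits, whose delicate points are: the weighted sup-norm convergence $\supnorm{(\,\cdot\,)^{(p)}u^p}\to0$ in spite of the growing weight, obtained by splitting $x$ into a bounded region (uniform continuity) and its complement (decay of $f^{(p)}u^p$, i.e.\ membership in $C_0$); and the $L^1$-convergence of the Fourier transforms of the $n$-th derivatives, where the term $f^{(n)}(\chi_R-1)$ must be handled through the approximate-identity property of $\widehat{\chi_R}$ on $\widehat{f^{(n)}}\in L^1$ — one cannot argue via $f^{(n)}(\chi_R-1)\to0$ in $L^1$, since $f^{(n)}$ itself need not be in $L^1$ (e.g.\ when $n=1$) — the remaining lower-order terms being absorbed by choosing $R$ large after $\epsilon$.
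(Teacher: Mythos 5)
Your proof is correct, but it follows a genuinely different route from the paper. The paper multiplies $f$ by a single family $\phi_k(x):=\phi(x/k)$ where $\phi$ is Schwartz, $\phi(0)=1$, and \emph{$\hat\phi\in C_c^\infty$}; this lands directly in $\mathfrak b_n$ because the rapid decrease of $\phi_k^{(m)}$ handles the weighted-sup-norm and lower-order Fourier conditions, while the one problematic term $\widehat{f^{(n)}\phi_k u^n}=\widehat{f^{(n)}}*\widehat{\phi_k u^n}$ is integrable precisely because $\widehat{\phi_ku^n}$ is a constant-coefficient differential operator applied to the compactly supported $\hat\phi_k$, hence again $C_c^\infty\subset L^1$. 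You instead reach the smaller class $C_c^\infty\subset\mathfrak b_n$ by chaining a sharp compactly supported cut-off $\chi_R$ with a mollifier $\psi_\epsilon$, and you correctly identify why neither step alone suffices: $f\chi_R$ is only $C_c^n$ so its $n$-th derivative's Fourier transform need not be $L^1$, while $f*\psi_\epsilon$ doesn't tame $\widehat{f^{(p)}u^p}$ because the polynomial weight does not commute with convolution. Your two quantitative limits are carried out correctly: for the mollification step you split the sup-norm estimate into a compact region (uniform continuity) and a tail (decay of $f^{(p)}u^p\in C_0$, using that $u^p(x)/u^p(x-y)$ is bounded for $|y|\le\epsilon$), and you handle $\widehat{g_\epsilon^{(n)}}-\widehat{f^{(n)}}=\widehat{f^{(n)}}(\widehat{\psi_\epsilon}-1)$ by dominated convergence; for the cut-off step you correctly observe that the piece $f^{(n)}(\chi_R-1)$ must be treated via the approximate-identity property of $\widehat{\chi_R}$ acting on $\widehat{f^{(n)}}\in L^1$ (rather than via $L^1$-convergence of $f^{(n)}(\chi_R-1)$, since $f^{(n)}$ need not be $L^1$), and you absorb the lower-order pieces $f^{(j)}\chi_R^{(n-j)}$ using the scaling $\supnorm{\chi_R^{(n-j)}}\lesssim R^{-(n-j)}$ at the cost of a harmless $\epsilon^{-1}$ factor, sent to zero by taking $R\to\infty$ after fixing $\epsilon$. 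The paper's single-multiplier argument is more economical; your two-stage argument is more elementary in that it avoids the need to produce a Schwartz function with compactly supported Fourier transform, and it proves the slightly stronger statement that $C_c^\infty$ is $\|\cdot\|_{\mathfrak B_n}$-dense in $\mathfrak B_n$.
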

\begin{proof}
	Let $f\in\mathfrak{B}_n$. Fix a Schwartz function $\phi$ such that $\hat{\phi}\in C^\infty_c(\R)$ and $\phi(0)=1$. For every $k\in\N$, define
\begin{align*}
\phi_k(x):=\phi(x/k),\quad x\in\R.
\end{align*}
We note that $\big\{\,\widehat{\phi_k}\,\big\}_{k=1}^\infty$ is an approximate identity.
In particular, it satisfies the property
\begin{align}
\label{ai1}
\|\widehat{\phi_k}*g-g\|_1\rightarrow 0\quad\text{as }\;k\rightarrow\infty
\end{align}
for every $g\in L^1$.
Define
$$f_k:=\phi_kf.$$
Because every $\phi_k^{(m)}$ is of rapid decrease, it is obvious that $f_k^{(p)}u^p=\sum_{m=0}^p \vect{p}{m} \phi_k^{(m)}f^{(p-m)}u^p$ is integrable for every $p\in\{0,\ldots,n\}$.
	By Lemma \ref{lem:FT in L1} and the rapid decrease of every $\phi_k^{(m)}$, we obtain that $\widehat{f_k^{(p)}u^p}\in L^1$ for every $p\in\{0,\ldots,n-1\}$.
	In the same way, we obtain that $(f^{(p)}\phi_k^{(n-p)}u^n)\hat{~}\in L^1$ for every $p\in\{0,\ldots,n-1\}$.
	Moreover, we have $(f^{(n)}\phi_ku^n)\hat{~}=\widehat{f^{(n)}}*\widehat{\phi_ku^n}\in L^1$. Hence,
		$$\widehat{f_k^{(n)}u^n}=\sum_{p=0}^n\vect{n}{p}(f^{(p)}\phi_k^{(n-p)}u^n)\hat{~}\in L^1.$$
	We conclude that $f_k\in\mathfrak{b}_n$.

	In order to prove that $\|f^{(p)}u^p-f_k^{(p)}u^p\|_\infty\to0$ as $k\to\infty$, we write
	\begin{align}\label{eq:supnorm bounds phi_k}
		\supnorm{f^{(p)}u^p-f_k^{(p)}u^p}\leq \supnorm{(1-\phi_k)f^{(p)}u^p}+\sum_{m=1}^p\vect{p}{m}\supnorm{\phi_k^{(m)}u^mf^{(p-m)}u^{p-m}}.
	\end{align}
Since $f^{(p)}u^p\in C_0(\R)$, we obtain
\begin{align}
\label{term1}
\supnorm{(1-\phi_k)f^{(p)}u^p}\rightarrow 0\quad\text{as }\, k\rightarrow\infty.
\end{align}
By using $\phi^{(m)}_k(x)=\phi^{(m)}(x/k)/k^m$, we obtain
\begin{align}
\label{phik}
|\phi_k^{(m)}(x)u^m(x)|\leq\sqrt{2}^{\,m}\supnorm{\phi^{(m)}}k^{-m/2}\quad\text{for }\, x\in[-\sqrt{k},\sqrt{k}]
\end{align}
and
\begin{align}
\label{phiglobal}
\supnorm{\phi^{(m)}_ku^m}\leq \sqrt{2}^{\,m}\supnorm{\phi^{(m)}u^m}.
\end{align}
We now analyze the terms on the right hand side of \eqref{eq:supnorm bounds phi_k} as $k\rightarrow\infty$.
By \eqref{phik}, \eqref{phiglobal}, and the assumption $f^{(p-m)}u^{p-m}\in C_0$, we obtain $\|\phi_k^{(m)}u^mf^{(p-m)}u^{p-m}\|_\infty\to0$ as $k\rightarrow\infty$.
Combining the latter with \eqref{eq:supnorm bounds phi_k} and \eqref{term1} implies
$$\supnorm{f^{(p)}u^p-f_k^{(p)}u^p}\to0\quad\text{as }\, k\rightarrow\infty,\quad p=0,\dots,n.$$

We are left to prove that $\|\widehat{f^{(n)}}-\widehat{f_k^{(n)}}\|_1\to0$.
Applying $f_k^{(n)}=\sum_{m=0}^n \begin{psmallmatrix}n\\ m\end{psmallmatrix}\phi_k^{(m)}f^{(n-m)}$ along with standard properties of the Fourier transform and convolution yields
\begin{align}
\label{above}
\absnorm{\widehat{f^{(n)}}-\widehat{f_k^{(n)}}}\leq\absnorm{\widehat{f^{(n)}}-\widehat{\phi_k}*\widehat{f^{(n)}}}+\sum_{m=1}^n\vect{n}{m}\frac{\absnorm{\widehat{\phi^{(m)}}}}{k^m}\absnorm{\widehat{f^{(n-m)}}}.
\end{align}
The first term on the right hand side of \eqref{above} converges to $0$ as $k\rightarrow\infty$ by \eqref{ai1} applied to $g=\widehat{f^{(n)}}$. The other terms on the right hand side of \eqref{above} converge to $0$ as $k\rightarrow\infty$ because $1/k^m\to 0$.
\end{proof}

\subsection{Multilinear operator integration}
\label{sct:MOI}
In this subsection we recall known as well as establish new technical results on operator integration that are important in the proof of our main theorem. An interested reader can find a more detailed discussion of the known results in \cite{ST19}.

The following multilinear operator integral was introduced in \cite{PSS13} (see also \cite[Definition 4.3.3]{ST19}).

\begin{defn}
\label{nosep}
For $n\in\N$, let $\phi:\R^{n+1}\to\C$ be a bounded Borel function and fix $\alpha,\alpha_1,\ldots,\alpha_n\in [1,\infty]$ such that $\tfrac{1}{\alpha}=\tfrac{1}{\alpha_1}+\ldots+\tfrac{1}{\alpha_n}$.
Let $H_0,\dots,H_n$ be self-adjoint operators in $\H$.
Denote $E^j_{l,m}:=E_{H_j}\big(\big[\frac{l}{m},\frac{l+1}{m}\big)\big)$.
If for all $V_j\in\S^{\alpha_j}$, $j=1,\ldots,n$,
the iterated limit
$$T^{H_0,\ldots,H_n}_{\phi}(V_1,\ldots,V_n)
:=\lim_{m\to\infty}\lim_{N\to\infty}\sum_{|l_0|,\ldots,|l_n|<N}
\phi\left(\frac{l_0}{m},\ldots,\frac{l_n}{m}\right)E^0_{l_0,m}V_1E^1_{l_1,m}\cdots V_nE^n_{l_n,m}\,$$
exists in $\S^\alpha$, then the transformation $T^{H_0,\ldots,H_n}_{\phi}$, which belongs to $\mB(\S^{\alpha_1}\times\cdots\times\S^{\alpha_n},\S^\alpha)$ by the Banach-Steinhaus theorem, is called a multilinear operator integral.
\end{defn}

We write $T^{H_0,\ldots,H_n}_\phi\in\mB(\S^{\alpha_1}\times\cdots\times\S^{\alpha_n},\S^\alpha)$ to indicate that $T^{H_0,\ldots,H_n}_\phi$ exists in the sense of Definition \ref{nosep}. The transformation given by the latter definition satisfies the following powerful estimate.

\begin{thm}\label{thm:PSS Higher Order}
Let $\alpha,\alpha_1,\ldots,\alpha_n\in (1,\infty)$. If $f\in C^n$ is such that $f^{(n)}\in C_b$, then $T^{H_0,\ldots,H_n}_{f^{[n]}}\in\mB(\S^{\alpha_1}\times\cdots\times\S^{\alpha_n},\S^\alpha)$ and
\begin{align}
\big\|T^{H_0,\ldots,H_n}_{f^{[n]}}\big\|_{\S^{\alpha_1}\times\cdots\times\S^{\alpha_n}\to\S^\alpha}\leq c_{\alpha_1,\ldots,\alpha_n}\supnorm{f^{(n)}}.
\end{align}	

\end{thm}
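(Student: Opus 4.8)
The plan is to reduce Theorem~\ref{thm:PSS Higher Order} to its known version on a bounded interval and then bootstrap to all of $\R$ by a limiting argument using functions that are more regular at infinity. First I would recall the classical estimate, due to Potapov--Skripka--Sukochev (see \cite[Theorem~5.3.5]{ST19}), that for $\alpha,\alpha_1,\ldots,\alpha_n\in(1,\infty)$ with $\tfrac1\alpha=\sum_j\tfrac1{\alpha_j}$ one has $T^{H_0,\ldots,H_n}_{f^{[n]}}\in\mB(\S^{\alpha_1}\times\cdots\times\S^{\alpha_n},\S^\alpha)$ with
\begin{align*}
\norm{T^{H_0,\ldots,H_n}_{f^{[n]}}}_{\S^{\alpha_1}\times\cdots\times\S^{\alpha_n}\to\S^\alpha}
\le c_{\alpha_1,\ldots,\alpha_n}\supnorm{f^{(n)}}
\end{align*}
\emph{whenever $f$ is a Schwartz function} (or, say, $f\in C_c^n$). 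This uses the deep fact that $T_{f^{[n]}}$ factors through a bounded multilinear Schur-type multiplier coming from the separable representation $f^{[n]}=\int \varphi_0(\lambda_0,s)\cdots\varphi_n(\lambda_n,s)\,d\nu(s)$, with $\int\|\varphi_0(\cdot,s)\|_\infty\cdots\|\varphi_n(\cdot,s)\|_\infty\,d|\nu|(s)\le c\,\supnorm{f^{(n)}}$, which is the content of the cited theorem; I would not reprove it.

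Next I would show that the map $f\mapsto T^{H_0,\ldots,H_n}_{f^{[n]}}$ extends continuously from the Schwartz class, in the $\supnorm{f^{(n)}}$-seminorm on divided differences, to all $f\in C^n$ with $f^{(n)}\in C_b$. The key step is to observe that if $f,g\in C^n$ agree to order $n$ at a point, the estimate controls $T_{(f-g)^{[n]}}$ by $\supnorm{(f-g)^{(n)}}$; more importantly, $f^{[n]}$ itself, as a bounded Borel function on $\R^{n+1}$, satisfies $\supnorm{f^{[n]}}\le\tfrac1{n!}\supnorm{f^{(n)}}$, so divided differences depend only on $f^{(n)}$ up to lower-order polynomial corrections that integrate to zero against the spectral measures. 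Concretely, given $f$ with $f^{(n)}\in C_b$, choose Schwartz $f_k$ with $\supnorm{f_k^{(n)}-f^{(n)}}\to0$ on every compact set and $\supnorm{f_k^{(n)}}\le 2\supnorm{f^{(n)}}$; truncate the spectral projections of $H_0,\ldots,H_n$ to a large box, apply the Schwartz-case bound on the box where $f_k^{[n]}\to f^{[n]}$ uniformly, and pass to the limit using dominated convergence for the Schatten-norm-valued integrals together with the uniform bound. This yields both existence of the iterated limit defining $T^{H_0,\ldots,H_n}_{f^{[n]}}$ in $\S^\alpha$ and the claimed norm estimate with the same constant.

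The main obstacle is the passage from compact spectral truncations back to the full operators: one must verify that the iterated-limit definition (Definition~\ref{nosep}) genuinely converges in $\S^\alpha$-norm, not merely weakly, for the non-decaying $f$, and that the box-truncation does not lose the bound in the limit. This is handled by noting that the separable representation of $f^{[n]}$ is valid globally (the functions $\varphi_j(\cdot,s)$ are bounded on all of $\R$ with the integrable bound above), so the multilinear multiplier estimate of \cite[Theorem~5.3.5]{ST19} applies verbatim once one knows $\widehat{f^{(n)}}$ need not be assumed — only $f^{(n)}\in C_b$ — because the representation is built from the boundedness and continuity of $f^{(n)}$ alone via the standard formula for $f^{[n]}$ as an average of $f^{(n)}$ over a simplex. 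Thus the approximation argument is really only needed to justify interchanging the limit with the operator-integral construction; once that is in place the estimate is immediate from the Schwartz case.
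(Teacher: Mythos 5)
The paper does not reprove this theorem; it simply cites \cite[Theorem~5.6]{PSS13} for the case $H_0=\cdots=H_n$ and \cite[Theorem~4.3.10]{ST19} for distinct base operators. Your proposal, by contrast, attempts to derive the bound from scratch, and the derivation contains a fundamental error.

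The crux of your argument is the claim that $f^{[n]}$ admits a separable representation $f^{[n]}=\int\varphi_0(\lambda_0,s)\cdots\varphi_n(\lambda_n,s)\,d\nu(s)$ with $\int\prod_j\|\varphi_j(\cdot,s)\|_\infty\,d|\nu|(s)\leq c\,\supnorm{f^{(n)}}$, ``built from the boundedness and continuity of $f^{(n)}$ alone via the standard formula for $f^{[n]}$ as an average of $f^{(n)}$ over a simplex.'' This is false. The Hermite--Genocchi formula $f^{[n]}(\lambda_0,\dots,\lambda_n)=\int_{\Delta_n}f^{(n)}(s_0\lambda_0+\cdots+s_n\lambda_n)\,d\sigma(s)$ is \emph{not} a separated representation: the integrand depends on the $\lambda_j$ only through their linear combination. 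To separate the variables one must further Fourier-decompose $f^{(n)}$, which is exactly the route taken in Theorem~\ref{thm:FT-L1-bound of MOI} of this paper, and it produces the bound $\frac{1}{n!}\|\widehat{f^{(n)}}\|_1$, not $c\supnorm{f^{(n)}}$. These are genuinely different estimates: the Fourier bound requires $\widehat{f^{(n)}}\in L^1$ and is not controlled by $\supnorm{f^{(n)}}$, while the supnorm bound of \cite{PSS13} holds only on $\S^\alpha$ with $\alpha\in(1,\infty)$ (crucially excluding $\alpha=1,\infty$) and is proved by dyadic decomposition and Marcinkiewicz-type multiplier techniques on UMD Schatten classes --- it provably \emph{cannot} be achieved through a representation of the form \eqref{phi representation} with total variation of order $\supnorm{f^{(n)}}$. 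So your approximation scheme is resting on a nonexistent lemma, and the passage from the Schwartz case to general $f\in C^n$ with $f^{(n)}\in C_b$ is left unjustified: the convergence of the iterated limit in Definition~\ref{nosep} in $\S^\alpha$-norm cannot be obtained by the box-truncation argument without the very bound you are trying to prove. You also implicitly use the bound circularly when you say the estimate ``controls $T_{(f-g)^{[n]}}$ by $\supnorm{(f-g)^{(n)}}$.'' The correct move here is simply to invoke the cited deep theorem of Potapov--Skripka--Sukochev; it is not the kind of result that reduces to the separable-symbol case by soft approximation.
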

\begin{proof}
The result for $H_0=\ldots=H_n$ is proved in \cite[Theorem 5.6]{PSS13}. Its extension to the case of distinct $H_0,\ldots,H_n$ is explained in the proof of \cite[Theorem 4.3.10]{ST19}.
\end{proof}

The domain of $T^{H_0,\ldots,H_n}_{\phi}$ extends to $\mB(\H)^{\times n}=\S^\infty\times\cdots\times\S^\infty$ for functions $\phi$ admitting a certain separation of variables. The proof of the following result can be found in \cite[Lemma 3.5]{PSS13}.

\begin{thm}\label{thm:coincidence}
Let $H_0,\ldots,H_n$ be self-adjoint operators in $\H$. Let $\phi:\R^{n+1}\to\C$ be a function admitting the representation
\begin{align}\label{phi representation}
	\phi(\lambda_0,\ldots,\lambda_n)=\int_\Omega\alpha_0(\lambda_0,s)\cdots\alpha_n(\lambda_n,s)\,d\nu(s),
\end{align}
where $(\Omega,\nu)$ is a finite measure space,
$$\alpha_j(\cdot,s):\R\to\C,\quad s\in\Omega,$$ are bounded continuous functions, and there is a sequence $\{\Omega_k\}_{k=1}^\infty$ of growing measurable subsets
of $\Omega$ such that $\Omega=\cup_{k=1}^\infty\Omega_k$ and the families
$$\{\alpha_j(\cdot,s)\}_{s\in\Omega_k},\quad j=0,\dots,n$$
are uniformly bounded and uniformly equicontinuous. Then,
$T^{H_0,\ldots,H_n}_\phi\in\mB( \S^{\alpha_1}\times\cdots\times\S^{\alpha_n},\S^\alpha)$ for all $\alpha,\alpha_j\in[1,\infty]$ with $\tfrac{1}{\alpha_1}+\ldots+\tfrac{1}{\alpha_n}=\tfrac{1}{\alpha}$, as well as
\begin{align*}
T^{H_0,\ldots,H_n}_\phi(V_1,\ldots,V_n)(y)=\int_\Omega\alpha_0(H_0,s)V_1\alpha_1(H_1,s)\cdots V_n\alpha_n(H_n,s)y\,d\nu(s),\quad y\in\H,
\end{align*}
and
\begin{align*}
\big\|T_{\phi}^{H_0,\dots,H_n}\big\|_{\S^{\alpha_1}\times\cdots\times\S^{\alpha_n}\to\S^\alpha}
\le \inf\int_\Omega\prod_{j=0}^n\|\alpha_j(\cdot,s)\|_\infty\,d|\nu|(s),
\end{align*}
where the infimum is taken over all possible representations \eqref{phi representation}.
\end{thm}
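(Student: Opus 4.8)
The plan is to substitute the separation-of-variables representation \eqref{phi representation} into the defining double sum of Definition \ref{nosep} and then carry the iterated limit through the integral over $\Omega$, using uniform boundedness of the $\alpha_j(\cdot,s)$ to supply an integrable majorant and uniform equicontinuity to upgrade strong operator convergence of spectral approximations to \emph{norm} convergence that is uniform in the integration parameter $s$. Fix an admissible tuple $\alpha,\alpha_1,\dots,\alpha_n\in[1,\infty]$ with $\tfrac{1}{\alpha_1}+\dots+\tfrac{1}{\alpha_n}=\tfrac{1}{\alpha}$ and operators $V_j\in\S^{\alpha_j}$, and set $M_j:=\sup_{s\in\Omega}\supnorm{\alpha_j(\cdot,s)}$; in the representations occurring in this paper (e.g.\ $\alpha_j(\lambda,s)=e^{is_j\lambda}$) one has $M_j<\infty$, so $\int_\Omega\prod_{j=0}^n\supnorm{\alpha_j(\cdot,s)}\,d|\nu|(s)\le\big(\prod_jM_j\big)|\nu|(\Omega)<\infty$, and I would first carry out the argument in this case. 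For fixed $m,N$ the defining sum is finite, so inserting \eqref{phi representation} and interchanging the finite sum with $\int_\Omega(\cdot)\,d\nu$ (legitimate since the integrand is bounded and $\nu$ is finite) rewrites it as $\int_\Omega P^{(m)}_N(s)\,d\nu(s)$, where $P^{(m)}_N(s)$ is the product of the $n+1$ truncated spectral sums $\sum_{|l_j|<N}\alpha_j(\tfrac{l_j}{m},s)E^j_{l_j,m}$, $j=0,\dots,n$, interlaced with $V_1,\dots,V_n$. Each truncated sum equals $g^{(m)}_{j,s}(H_j)\sum_{|l_j|<N}E^j_{l_j,m}$, where $g^{(m)}_{j,s}$ is the bounded Borel step function taking the value $\alpha_j(\tfrac{l}{m},s)$ on $[\tfrac{l}{m},\tfrac{l+1}{m})$; these operators are normal and $\supnorm{g^{(m)}_{j,s}}\le\supnorm{\alpha_j(\cdot,s)}$ uniformly in $N,s$.

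For the inner limit $N\to\infty$ (with $m,s$ fixed) each truncated sum converges, together with its adjoint, in the strong operator topology to $g^{(m)}_{j,s}(H_j)$, since $\sum_{|l_j|<N}E^j_{l_j,m}\to I$ strongly. Since left (and, after passing to adjoints, right) multiplication by a fixed element of $\S^{\alpha_j}$ is continuous from norm-bounded subsets of $\mB(\H)$ with the strong operator topology into $\S^{\alpha_j}$ (reduce to finite-rank perturbations), a telescoping estimate combined with the product inequality $\nrm{XY}{\gamma}\le\nrm{X}{\gamma_1}\nrm{Y}{\gamma_2}$ for Schatten norms yields $P^{(m)}_N(s)\to P^{(m)}(s):=g^{(m)}_{0,s}(H_0)V_1g^{(m)}_{1,s}(H_1)\cdots V_ng^{(m)}_{n,s}(H_n)$ in $\S^\alpha$, with $\nrm{P^{(m)}_N(s)}{\alpha}\le\big(\prod_jM_j\big)\prod_{j=1}^n\nrm{V_j}{\alpha_j}$ uniformly in $N,s$. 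Dominated convergence over $\Omega$ gives $\lim_{N\to\infty}\int_\Omega P^{(m)}_N(s)\,d\nu(s)=\int_\Omega P^{(m)}(s)\,d\nu(s)$. For the outer limit $m\to\infty$, uniform equicontinuity on each $\Omega_k$ is decisive: $\supnorm{g^{(m)}_{j,s}-\alpha_j(\cdot,s)}\le\sup\{|\alpha_j(x,s)-\alpha_j(y,s)|:|x-y|\le\tfrac1m\}\to0$ as $m\to\infty$ \emph{uniformly in $s\in\Omega_k$}, so $g^{(m)}_{j,s}(H_j)\to\alpha_j(H_j,s)$ in operator norm uniformly in $s\in\Omega_k$, and another telescoping estimate gives $P^{(m)}(s)\to P(s):=\alpha_0(H_0,s)V_1\alpha_1(H_1,s)\cdots V_n\alpha_n(H_n,s)$ in $\S^\alpha$ uniformly in $s\in\Omega_k$. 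Splitting $\int_\Omega=\int_{\Omega_k}+\int_{\Omega\setminus\Omega_k}$, using the uniform bound $\nrm{P^{(m)}(s)-P(s)}{\alpha}\le 2\big(\prod_jM_j\big)\prod_j\nrm{V_j}{\alpha_j}$ on the second piece together with $|\nu|(\Omega\setminus\Omega_k)\to0$ as $k\to\infty$, I conclude $\int_\Omega P^{(m)}(s)\,d\nu(s)\to\int_\Omega P(s)\,d\nu(s)$ in $\S^\alpha$. This proves existence of $T^{H_0,\dots,H_n}_\phi$ and the stated formula.

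The norm bound now follows from $\|\alpha_j(H_j,s)\|\le\supnorm{\alpha_j(\cdot,s)}$ and the triangle inequality for the ($\S^\alpha$-valued) integral, and taking the infimum over admissible representations of $\phi$ gives the last inequality. When the $\alpha_j(\cdot,s)$ are only bounded on each $\Omega_k$ but not globally, one reduces to the above by applying it to $\phi_k(\lambda_0,\dots,\lambda_n):=\int_{\Omega_k}\alpha_0(\lambda_0,s)\cdots\alpha_n(\lambda_n,s)\,d\nu(s)$ (whose representation over $(\Omega_k,\nu|_{\Omega_k})$ has globally uniformly bounded and equicontinuous families) and passing $k\to\infty$ along the exhaustion $\Omega=\bigcup_k\Omega_k$; this last passage is where the pointwise $|\nu|$-integrability of $s\mapsto\prod_j|\alpha_j(\lambda_j,s)|$ implicit in \eqref{phi representation} is used. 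In the applications of interest the families are globally uniformly bounded, so only the equicontinuity genuinely requires the exhausting sequence $\{\Omega_k\}$.

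I expect the main obstacle to be precisely this interchange of the iterated limit with the integral over $\Omega$ — equivalently, producing convergence of the spectral step-function approximations that is uniform in the integration parameter. Uniform equicontinuity is what turns strong operator convergence of the approximants into operator-norm convergence uniform in $s$, and uniform boundedness supplies the integrable majorant; because these two properties are available only on the exhausting sets $\Omega_k$, one must split the $\Omega$-integral accordingly and, in the non-globally-bounded case, also run the separate $\phi_k$-exhaustion. A subsidiary point, already implicit in the well-definedness of the transformation in Definition \ref{nosep}, is the $\S^\alpha$-continuity of the multilinear product $A_0V_1A_1\cdots V_nA_n$ under strong convergence (with adjoints) of uniformly bounded $A_j$ when the $V_j$ lie in Schatten classes, which I would settle via the usual telescoping decomposition together with the reduction of $A^{(N)}V\to AV$ (for $V\in\S^{\alpha_j}$, $A^{(N)}\to A$ strongly, $\sup_N\|A^{(N)}\|<\infty$) to the case of finite-rank $V$.
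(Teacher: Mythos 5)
The paper does not prove Theorem \ref{thm:coincidence} itself but delegates to \cite[Lemma 3.5]{PSS13}; your reconstruction (Fubini on the finite double sum, strong-operator-to-Schatten absorption of the spectral truncations for the inner $N\to\infty$ limit, operator-norm step-function approximation via uniform equicontinuity for the outer $m\to\infty$ limit, and the $\{\Omega_k\}$-exhaustion for dominated convergence) is the standard argument behind that lemma and is essentially correct. The only places that deserve a touch more care, and which you partly flag yourself, are (a) the inner-limit telescoping when two adjacent indices satisfy $\alpha_k=\alpha_{k+1}=\infty$ (there one must combine the strong-operator convergence of the deficient factor with Schatten-norm convergence of the remaining partial product containing a compact $V_j$, rather than Hölder alone), and (b) the boundary case $\alpha=\infty$, where all $V_j$ are merely bounded and the $N\to\infty$ limit does not in general converge in operator norm when $H_j$ has unbounded spectrum — a genuine subtlety of Definition \ref{nosep} in this edge case rather than of your argument.
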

We will also need the following particular case of Theorem \ref{thm:coincidence}.
\begin{thm}\label{thm:FT-L1-bound of MOI}
If $f\in C^n$ and $\widehat{f^{(n)}} \in L^1$, then $\phi=f^{[n]}$ satisfies the assumptions of Theorem \ref{thm:coincidence} and, for all $\alpha,\alpha_j\in[1,\infty]$ with $\tfrac{1}{\alpha_1}+\ldots+\tfrac{1}{\alpha_n}=\tfrac{1}{\alpha}$,
\begin{align}
\label{fourierbound}
\big\|T_{f^{[n]}}^{H_0,\dots,H_n}\big\|_{\S^{\alpha_1}\times\cdots\times\S^{\alpha_n}\to\S^\alpha}
\le \frac{1}{n!}\big\|\widehat{f^{(n)}}\big\|_1.
\end{align}
\end{thm}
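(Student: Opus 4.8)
The statement is that if $f\in C^n$ with $\widehat{f^{(n)}}\in L^1$, then $f^{[n]}$ admits a representation of the form \eqref{phi representation} with the required uniform equicontinuity, and moreover the infimum bound yields $\frac{1}{n!}\|\widehat{f^{(n)}}\|_1$. The key idea is the classical integral formula for the divided difference in terms of the Fourier transform of $f^{(n)}$. First I would recall that for $f\in C^n$ one has the Hermite–Genocchi representation
\begin{align*}
f^{[n]}(\lambda_0,\ldots,\lambda_n)=\int_{\Delta_n}f^{(n)}\Big(\lambda_0+\sum_{j=1}^n s_j(\lambda_j-\lambda_{j-1})\Big)\,ds,
\end{align*}
where $\Delta_n$ is the standard simplex. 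Substituting $f^{(n)}(x)=\frac{1}{2\pi}\int_\R\widehat{f^{(n)}}(t)e^{itx}\,dt$ and using Fubini (justified since $\widehat{f^{(n)}}\in L^1$ and $\Delta_n$ has finite measure) lets one write
\begin{align*}
f^{[n]}(\lambda_0,\ldots,\lambda_n)=\frac{1}{2\pi}\int_\R\widehat{f^{(n)}}(t)\int_{\Delta_n}e^{it(\lambda_0+\sum_j s_j(\lambda_j-\lambda_{j-1}))}\,ds\,dt.
\end{align*}
The inner integral factors, after expanding the exponential and regrouping the simplex coordinates, into a product of the form $\alpha_0(\lambda_0,\cdot)\cdots\alpha_n(\lambda_n,\cdot)$ where each $\alpha_j$ is (up to the scalar weight absorbed into the measure) of the shape $e^{i\sigma\lambda_j}$ for parameters $\sigma$ running over a compact set.

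**Key steps.** (1) Write down the Hermite–Genocchi formula; (2) insert the Fourier inversion for $f^{(n)}$ and swap the order of integration; (3) exhibit $f^{[n]}$ in the form \eqref{phi representation}: the measure space $\Omega$ is $\R\times\Delta_n$ (or after a coordinate change, a compact parameter set times $\R$), with $d\nu(t,s)=\frac{1}{2\pi}\widehat{f^{(n)}}(t)\,dt\,ds$, and $\alpha_j(\lambda_j,(t,s))=\exp\big(i t\,c_j(s)\,\lambda_j\big)$ for appropriate affine functions $c_j$ of the simplex variables with $|c_j(s)|\le 1$; (4) verify the hypotheses of Theorem~\ref{thm:coincidence}: each $\alpha_j(\cdot,s)$ is bounded continuous, and taking $\Omega_k=\{|t|\le k\}\times\Delta_n$ one gets uniform boundedness ($|\alpha_j|\le 1$ everywhere) and uniform equicontinuity on each $\Omega_k$ since the frequencies $t\,c_j(s)$ are bounded by $k$ there; also $|\nu|(\Omega)=\frac{1}{2\pi}\|\widehat{f^{(n)}}\|_1\cdot|\Delta_n|<\infty$; (5) compute the bound: $\prod_{j=0}^n\|\alpha_j(\cdot,s)\|_\infty=1$, so Theorem~\ref{thm:coincidence} gives $\|T_{f^{[n]}}^{H_0,\ldots,H_n}\|\le\int_\Omega 1\,d|\nu|=\frac{1}{2\pi}\|\widehat{f^{(n)}}\|_1\,\mathrm{vol}(\Delta_n)=\frac{1}{2\pi n!}\|\widehat{f^{(n)}}\|_1$. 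Here a factor-of-$2\pi$ normalization will need to be matched against whatever Fourier-transform convention the paper uses so that the final constant comes out as exactly $\frac{1}{n!}$; with the convention $\hat f(t)=\int f(x)e^{-itx}\,dx$ and inversion $f(x)=\frac{1}{2\pi}\int\hat f(t)e^{itx}\,dt$ the $2\pi$ is as written, but the paper may use the symmetric or the $e^{-2\pi i}$ convention, in which case $\mathrm{vol}(\Delta_n)=\frac{1}{n!}$ alone produces the stated constant.

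**Main obstacle.** The routine but slightly delicate point is Step (3)–(4): carrying out the change of variables on the simplex so that the exponential genuinely separates as a product $\prod_j\alpha_j(\lambda_j,s)$ with each factor depending on $\lambda_j$ only, and then confirming that the resulting families are uniformly equicontinuous on the $\Omega_k$. The separation is transparent once one writes $\lambda_0+\sum_{j=1}^n s_j(\lambda_j-\lambda_{j-1})=\sum_{j=0}^n \theta_j(s)\lambda_j$ with $\theta_j(s)\ge 0$, $\sum_j\theta_j(s)=1$ — these are just the barycentric coordinates — so $e^{it\sum_j\theta_j(s)\lambda_j}=\prod_j e^{it\theta_j(s)\lambda_j}$, and the equicontinuity on $\{|t|\le k\}$ follows from $|\partial_{\lambda_j}\alpha_j|=|t\theta_j(s)|\le k$. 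The genuinely substantive input — that divided differences have this Fourier-integral shape at all — is classical, so the proof is essentially an application of Theorem~\ref{thm:coincidence} plus bookkeeping, and the only real care needed is tracking the normalization constant so that it lands on $\frac{1}{n!}\|\widehat{f^{(n)}}\|_1$ exactly.
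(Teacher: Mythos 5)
Your proof follows essentially the same route as the paper: both rest on the integral representation $f^{[n]}(\lambda_0,\ldots,\lambda_n)=\int_{\Delta_n}\int_\R e^{its_0\lambda_0}\cdots e^{its_n\lambda_n}\widehat{f^{(n)}}(t)\,dt\,d\sigma(s)$ over $\Omega=\Delta_n\times\R$ and then invoke Theorem~\ref{thm:coincidence} with $\Omega_k=\Delta_n\times[-k,k]$. The only cosmetic differences are that the paper cites \cite{PSS13} for this representation (which your Hermite--Genocchi-plus-Fourier-inversion derivation establishes directly) and uses a Fourier normalization without a $2\pi$ factor, so the constant lands on $\tfrac{1}{n!}\|\widehat{f^{(n)}}\|_1$ exactly as you anticipated.
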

\begin{proof}
Let $\phi=f^{[n]}$, where $f\in C^n$ and $\widehat{f^{(n)}} \in L^1$. A straightforward induction argument (see, e.g., the proofs of \cite[Lemma 5.1 and Lemma 5.2]{PSS13}) gives
\begin{align}
\label{ddfourier}
f^{[n]}(\lambda_0,\ldots,\lambda_n)=\int_{\Delta_n}\int_\R e^{its_0\lambda_0}\cdots e^{its_n\lambda_n}\widehat{f^{(n)}}(t)\,dt\,d\sigma(s),
\end{align}
where $\Delta_n=\big\{s=(s_0,\dots,s_n)\in\R_{\ge 0}^{n+1}:\; \sum_{j=0}^n s_j=1\big\}$ is the $n$-simplex, $d\sigma$ is the Lebesgue measure on $\Delta_n$, and $dt$ is the Lebesgue measure on $\R$.
That is, $f^{[n]}$ admits a representation of the form \eqref{phi representation}, where
$(\Omega,\nu)=\big(\Delta_n\times\R,d\sigma\times\big(\widehat{f^{(n)}}(t)\,dt\big)\big)$. Since $\big\|d\sigma\times\big(\widehat{f^{(n)}}(t)\,dt\big)\big\|\le\tfrac{1}{n!}\|\widehat{f^{(n)}}\|_1$,
the estimate
\eqref{fourierbound} follows.
\end{proof}

All three of the above known theorems (Theorems \ref{thm:PSS Higher Order}, \ref{thm:coincidence}, \ref{thm:FT-L1-bound of MOI}) are needed to prove the following new crucial result, Theorem \ref{thm:adding resolvents}. That theorem  creates Schatten class perturbations $\tilde V_j=V_j(H_j-iI)^{-1}$ out of relative Schatten class perturbations $V_j$ inside a multiple operator integral by means of a certain change of variables.
It will be used throughout this paper, in particular to apply the bound from Theorem \ref{thm:PSS Higher Order} to the relative Schatten case, in which the perturbation $V$ is generally noncompact.

\begin{thm}\label{thm:adding resolvents}
Let $n\in\N$, let $H_0,\ldots,H_n$ be self-adjoint operators in $\H$, and let $V_1,\ldots,V_n\in\mB(\H)$. Then, the multiple operator integral given by Definition \ref{nosep} satisfies the following properties.
\begin{enumerate}[(i)]
\item\label{weight}
For every $f\in C^{n}$ satisfying $\widehat{f^{(n)}},\widehat{(fu)^{(n)}},\widehat{f^{(n-1)}}\in L^1$, we have
\begin{align}
\label{weight0}
 T^{H_0,\ldots,H_n}_{f^{[n]}}(V_1,\ldots,V_n)
=& T^{H_0,\ldots,H_n}_{(fu)^{[n]}}((H_0-iI)^{-1}V_1,V_2,\ldots,V_n)\\
\nonumber
&-(H_0-iI)^{-1}V_1T^{H_1,\ldots,H_n}_{f^{[n-1]}}(V_2,\ldots,V_n),\\
\label{weight1}
T^{H_0,\ldots,H_n}_{f^{[n]}}(V_1,\ldots,V_n)
=&T^{H_0,\ldots,H_n}_{(fu)^{[n]}}(V_1,\ldots,\tilde{V}_j,\ldots,V_n)\\
\nonumber
&-T^{H_0,\ldots,H_{j-1},H_{j+1},\ldots,H_n}_{f^{[n-1]}}(V_1,\ldots,\tilde{V}_jV_{j+1},\ldots,V_n)
\end{align}
for $j=1,\ldots,n-1$, and
\begin{align*}
T^{H_0,\ldots,H_n}_{f^{[n]}}(V_1,\ldots,V_n)
&=T^{H_0,\ldots,H_n}_{(fu)^{[n]}}(V_1,\ldots,V_{n-1},\tilde{V}_n)
-T^{H_0,\ldots,H_{n-1}}_{f^{[n-1]}}(V_1,\ldots,V_{n-1})\tilde{V}_n.
\end{align*}

\item\label{weights}
Denote $\tilde{V}_{j,l}:=\tilde V_{j+1}\cdots \tilde V_l$. Then, for all $f\in C_c^{n+1}$,
$$T^{H_0,\ldots,H_n}_{f^{[n]}}(V_1,\ldots,V_n)=\sum_{p=0}^n\sum_{0<j_1<\cdots<j_{p}\leq n}\!(-1)^{n-p}\, T^{H_0,H_{j_1},\ldots,H_{j_p}}_{(fu^p)^{[p]}}
(\tilde{V}_{0,j_1},\ldots,\tilde{V}_{j_{p-1},j_p})\,
\tilde{V}_{j_{p},n}.$$
If $V_k(H_k-iI)^{-1}\in\S^n$ for all $k=1,\ldots,n$, then the above formula holds for every $f\in\mathfrak{B}_n$ introduced in \eqref{mBn}, and hence, for every $f\in\mW_n$.

\item\label{trace class} If $V_k(H_k-iI)^{-1}\in\S^n$ for every $k=1,\dots,n$, then
$$T^{H_0,\ldots,H_n}_{f^{[n]}}(V_1,\ldots,V_n)\in\S^1$$
for every $f\in\mathfrak{b}_n$.
\end{enumerate}
\end{thm}
\begin{proof}
Since $u^{[1]}=1_{\R^2}$ and $u^{[p]}=0$ for all $p\geq2$, the Leibniz rule for divided differences gives
$$(fu)^{[n]}(\lambda_0,\ldots,\lambda_n)=f^{[n]}(\lambda_0,\ldots,\lambda_n)u(\lambda_n)+f^{[n-1]}(\lambda_0,\ldots,\lambda_{n-1}).$$
	If we swap $\lambda_n$ with $\lambda_j$ (for any $j\in\{0,\ldots,n\}$), and rearrange using symmetry of the divided difference, we obtain
\begin{align}\label{eq:adding one weight}
f^{[n]}(\lambda_0,\ldots,\lambda_n)
=&(fu)^{[n]}(\lambda_0,\ldots,\lambda_n)u^{-1}(\lambda_j)\\
\nonumber
&-f^{[n-1]}(\lambda_0,\ldots,\lambda_{j-1},\lambda_{j+1},\ldots,\lambda_n)u^{-1}(\lambda_j).
\end{align}
Applying \eqref{eq:adding one weight} repeatedly, we obtain
\begin{align}	
\label{eq:ddweights}	
&f^{[n]}(\lambda_0,\ldots,\lambda_n)\\
\nonumber
&=\sum_{p=0}^n\sum_{0<j_1<\cdots<j_{p}\leq n}(-1)^{n-p}
(fu^p)^{[p]}(\lambda_0,\lambda_{j_1},\ldots,\lambda_{j_p})
u^{-1}(\lambda_1)\cdots u^{-1}(\lambda_n).
\end{align}
Since $\widehat{f^{(n)}},\widehat{(fu)^{(n)}},\widehat{f^{(n-1)}}\in L^1$,
by Theorem \ref{thm:FT-L1-bound of MOI}, the functions $(fu)^{[n]}$ and $f^{[n-1]}$  admit the representation \eqref{phi representation}. Hence, the function on the right hand side of \eqref{eq:adding one weight} also admits the representation \eqref{phi representation}. Therefore, by Theorem \ref{thm:coincidence} applied
to $\phi=f^{[n]}$ and $\phi=\text{r.h.s of }\eqref{eq:adding one weight}$,
we obtain \eqref{weight}.
Similarly, applying Theorem \ref{thm:coincidence} and Theorem \ref{thm:FT-L1-bound of MOI} to \eqref{eq:ddweights} gives
\begin{align}\label{weights with hats}
	T^{H_0,\ldots,H_n}_{f_k^{[n]}}(V_1,\ldots,V_n)
=&\sum_{p=0}^n\sum_{0<j_1<\cdots<j_{p}\leq n}\!(-1)^{n-p}\, T^{H_0,H_{j_1},\ldots,H_{j_p}}_{(f_ku^p)^{[p]}}
(\tilde{V}_{0,j_1},\ldots,\tilde{V}_{j_{p-1},j_p})\,
\tilde{V}_{j_{p},n}\,
\end{align}
for all $f_k\in\mathfrak{b}_n$ introduced in \eqref{mbn}.

Let $f\in\mathfrak{B}_n$. By Lemma \ref{lem:density} we can choose $f_k\in\mathfrak{b}_n$ for all $k\in\N$ such that
\begin{align}\label{eq:density}
	\|\widehat{f_k^{(n)}}-\widehat{f^{(n)}}\|_1\to0\quad\text{and}\quad\|(f_ku^p)^{(p)}-(fu^p)^{(p)}\|_\infty\to0.
\end{align}
	The above $L^1$-norm-convergence implies that the left hand side of \eqref{weights with hats} converges (in operator norm) to $T^{H_0,\ldots,H_n}_{f^{[n]}}(V_1,\ldots,V_n)$ by \eqref{fourierbound}.
	Moreover, we find that $\tilde{V}_{j_{m-1},j_{m}}\in\S^{\alpha_m}$, where $\alpha_m:=n/(j_m-j_{m-1})\in(1,\infty)$ for $m=2,\ldots,p$, and $\tilde V_{0,j_1}\in\S^{\alpha_1}$, $\tilde V_{j_p,n}\in\S^{\alpha_{p+1}}$, where $\alpha_1=n/j_1\in[1,\infty)$, $\alpha_{p+1}=n/(n-j_p)\in(1,\infty]$.
By H\"{o}lder's inequality and Theorems \ref{thm:coincidence} and \ref{thm:FT-L1-bound of MOI}, we obtain that the right hand side of \eqref{weights with hats} is in $\S^1$, implying \eqref{trace class}.
On the strength of Theorem \ref{thm:PSS Higher Order} applied to $\S^{2\alpha_m}$, the supnorm-convergence in \eqref{eq:density} implies that the right hand side of \eqref{weights with hats} converges to the right hand side of \eqref{weights} in the operator norm (since convergence in Schatten norms implies uniform convergence). By uniqueness of limits in $\mB(\H)$, we conclude \eqref{weights}.
\end{proof}

\begin{remark}
(i) Although the condition $V(H-iI)^{-1}\in\S^n$ is equivalent to $V(H^2+I)^{-1/2}\in\S^n$, this paper makes use of the complex weight $u(t)=t-i$ rather than the real weight $\tilde u(t)=\sqrt{t^2+1}$ because there is no suitable analog of Theorem \ref{thm:adding resolvents} for the latter. For instance, an analog of \eqref{eq:adding one weight} for $\tilde{u}(t):=\sqrt{t^2+1}$ with $n=4$ and $j=1$ contains terms like
\begin{align}
f^{[2]}(\lambda_0,\lambda_2,\lambda_4)\,\tilde{u}^{[2]}(\lambda_1,\lambda_2,\lambda_3)\,u^{-1}(\lambda_1).
\end{align}
The latter is an obstacle to creating weights in the spirit of Theorem \ref{thm:adding resolvents}.
\end{remark}

\subsection{Taylor remainder via operator integrals}

The following two results are known. We refer the interested reader to \cite{ST19} for additional details.

\begin{thm}
\label{dm}
Let $n\in\N$ and let $f\in C^n(\R)$ be such that $\widehat{f^{(k)}}\in L^1(\R)$, $k=1,\dots,n$. Let $H$ be a self-adjoint operator in $\H$, let $V\in\mB(\H)_{\text{sa}}$.
Then, the Fr\'{e}chet derivative $\frac{1}{k!}\frac{d^k}{dt^k}f(H+tV)|_{t=0}$ exists in the operator norm and admits the multiple operator integral representation
\begin{align}
\label{dermoi}
\frac{1}{k!}\frac{d^k}{ds^k}f(H+sV)\big|_{s=t}=T_{f^{[k]}}^{H+tV,\dots,H+tV}(V,\dots,V).
\end{align}
The map $t\mapsto\frac{d^k}{ds^k}f(H+sV)|_{s=t}$
is continuous in the strong operator topology and, when $V\in\S^k$, in the $\S^1$-norm.
\end{thm}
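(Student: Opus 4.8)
The plan is to reduce both the existence of the Fréchet derivatives and their multiple-operator-integral representation to the already-established machinery for multilinear operator integrals with a Fourier-integrable symbol (Theorem \ref{thm:FT-L1-bound of MOI}, hence Theorems \ref{thm:coincidence} and the norm bound \eqref{fourierbound}). First I would record the integral representation \eqref{ddfourier} of the divided difference $f^{[k]}$, valid since $\widehat{f^{(k)}}\in L^1$, which writes $T_{f^{[k]}}^{H_0,\dots,H_k}(V,\dots,V)$ as an absolutely convergent Bochner integral over the simplex $\Delta_k\times\R$ of products $e^{its_0H_0}Ve^{its_1H_1}\cdots Ve^{its_kH_k}$. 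With this in hand, the standard computation is to expand $f(H+tV)$ via the same Fourier representation $f(x)=\frac{1}{2\pi}\int \widehat f(\xi)e^{ix\xi}\,d\xi$ (justified because $\widehat{f'}\in L^1$ implies $\widehat f\in L^1$, using $f\in C_0$), apply Duhamel's formula to $e^{i\xi(H+tV)}$ iteratively to generate the Taylor expansion in $t$, and identify the $k$-th coefficient with the operator integral. This is the route taken in \cite{PSS13}; I would cite it for the algebraic identity and concentrate on the convergence bookkeeping.

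Concretely, the key steps in order: (1) Fix $t$ and set $H_t:=H+tV$; expand $e^{i\xi H_{t+s}}$ using the iterated Duhamel formula $e^{i\xi(H_t+sV)}=\sum_{k=0}^{n-1} (is)^k\int_{\xi\Delta_k}e^{ir_0H_t}Ve^{ir_1H_t}\cdots Ve^{ir_kH_t}\,dr + \text{(remainder)}$, valid in operator norm since $V$ is bounded. (2) Integrate against $\widehat f(\xi)\,d\xi$ and swap the order of integration (Fubini, legitimate by the $L^1$ bounds and $\|V\|<\infty$); the $k$-th term becomes exactly $s^k\,T_{f^{[k]}}^{H_t,\dots,H_t}(V,\dots,V)$ after the change of variables $r_j=\xi s_j$ that converts the $\xi$-integral into $\widehat{f^{(k)}}$ via $\xi^k\widehat f(\xi)=i^{-k}\widehat{f^{(k)}}(\xi)$. (3) Show the remainder term is $o(s^k)$, indeed $O(s^{k+1})$, in operator norm, which gives differentiability and the formula \eqref{dermoi}. (4) For the continuity statement: the map $t\mapsto T_{f^{[k]}}^{H_t,\dots,H_t}(V,\dots,V)$ is continuous in the strong operator topology because $t\mapsto e^{ir_jH_t}$ is SOT-continuous (by Trotter–Kato, since $H_t\to H_{t_0}$ in strong resolvent sense as $t\to t_0$) and the integrand is dominated uniformly by $|\widehat{f^{(k)}}(\xi)|\,\|V\|^k$ on $\Delta_k\times\R$, so dominated convergence applies to the Bochner integral tested against any vector. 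When $V\in\S^n$, I would instead invoke Theorem \ref{thm:PSS Higher Order} together with a perturbation-of-the-operators argument: write the difference $T^{H_t,\dots}_{f^{[k]}}(V,\dots,V)-T^{H_{t_0},\dots}_{f^{[k]}}(V,\dots,V)$ as a telescoping sum where one $H_t$ is replaced by $H_{t_0}$ at a time, use the resolvent-type identity $e^{ir H_t}-e^{irH_{t_0}}=i(t-t_0)\int_0^r e^{iuH_t}Ve^{i(r-u)H_{t_0}}\,du$, and bound each resulting term in $\S^1$ using Hölder and the $\S^n$-norm of the $V$'s, getting an estimate of order $|t-t_0|$.

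The main obstacle I anticipate is step (3)/(4) in the $\S^1$ case: controlling the Taylor remainder and the continuity in trace-class norm rather than merely operator norm. In operator norm everything follows from boundedness of $V$ and the $L^1$-integrability of $\widehat{f^{(k)}}$, but to land in $\S^1$ one must carefully distribute the $n$ copies of $V\in\S^n$ across the $n$ slots via Hölder's inequality $\|A_1\cdots A_n\|_1\le \prod\|A_j\|_n$, and for the \emph{remainder} (which naively has $n+1$ factors of $V$, only $n$ of them in $\S^n$) one uses that the extra factor is merely bounded — so the remainder sits in $\S^1$ as well, with norm $O(|s|^{k+1})$ when $k=n-1$ using exactly $n$ of the $n$ available $\S^n$ factors. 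Keeping track of which estimate (Theorem \ref{thm:FT-L1-bound of MOI} for the general bounded case, Theorem \ref{thm:PSS Higher Order} for the compact case) is being invoked where, and making sure the strong-operator-topology continuity argument's dominating function is genuinely integrable on the noncompact factor $\R$, are the delicate points; the rest is a routine application of Duhamel's formula and Fubini. Since the statement asserts these are "known" results, in the write-up I would state them with a reference to \cite{ST19} and \cite{PSS13} and only sketch the Duhamel expansion, rather than grinding through all the constants.
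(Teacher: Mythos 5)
Your plan reconstructs the Duhamel--Fourier argument that underlies the references the paper actually cites: the paper's proof simply invokes \cite[Theorem 5.3.5]{ST19} for the existence/representation and \cite[Proposition 4.3.15]{ST19} for continuity, and those references are built on the representation \eqref{ddfourier} and the Duhamel expansion you outline. So the high-level strategy is correct and aligned with the paper.

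One technical wrinkle deserves flagging. In step (3) you claim the Taylor remainder at order $k$ is $O(s^{k+1})$. This is fine for $k<n$, where $\widehat{f^{(k+1)}}\in L^1$ controls the $(k+1)$-st divided difference. But for $k=n$ there is no control on $f^{[n+1]}$ under the hypothesis $f\in C^n$, $\widehat{f^{(n)}}\in L^1$, so the remainder cannot be bounded by $O(s^{n+1})$; only $o(s^n)$ is available. Moreover, that $o(s^n)$ does not follow from a naive dominated-convergence argument under the Fourier representation: to pass from $T_{f^{[n]}}^{H_{t+h},H_t,\ldots,H_t}(V,\ldots,V)$ to $T_{f^{[n]}}^{H_t,\ldots,H_t}(V,\ldots,V)$ in \emph{operator norm} you would need to dominate $\|e^{i\xi s_0 H_{t+h}}-e^{i\xi s_0 H_t}\|\le |\xi|\,s_0\,|h|\,\|V\|$ by an integrable function of $\xi$, and $|\xi|\,|\widehat{f^{(n)}}(\xi)|$ need not be in $L^1$. (The same argument is fine for the strong operator topology, where one tests against a fixed vector, which is why your step (4) goes through.) Establishing $n$-th order differentiability in operator norm under the stated hypothesis is exactly the nontrivial content of \cite[Theorem 5.3.5]{ST19}, so your closing instinct --- to cite rather than grind through it --- is the right call and is precisely what the paper itself does.
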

\begin{proof}
The first assertion is given in \cite[Theorem 5.3.5]{ST19} and, in fact, holds for a larger set of functions.
The second assertion follows from \cite[Proposition 4.3.15]{ST19}. The proof relies on Theorems \ref{thm:coincidence} and \ref{thm:FT-L1-bound of MOI}.
\end{proof}

Given a function $f\in C^n(\R)$ satisfying $\widehat{f^{(k)}}\in L^1(\R)$, $k=1,\dots,n$, a self-adjoint operator $H$ in $\H$, and $V\in\mB(\H)_{\text{sa}}$, we denote the $n^{\text{th}}$ Taylor remainder by
\begin{align}\label{eq:def of the remainder}	R_{n,H,f}(V):=f(H+V)-f(H)-\sum_{k=1}^{n-1}\frac{1}{k!}\frac{d^k}{dt^k}f(H+tV)\big|_{t=0}.
\end{align}

The Taylor remainder admits the following representation in terms of a multiple operator integral.
\begin{thm}
\label{rm}
Let $n\in\N$ and let $f\in C^n(\R)$ be such that $\widehat{f^{(k)}}\in L^1(\R)$, $k=1,\dots,n$.
Let $H$ be a self-adjoint operator in $\H$, let $V\in\mB(\H)_{\text{sa}}$. Then,
\begin{align}
\label{remmoi}
R_{n,H,f}(V)=T^{H,H+V,H,\ldots,H}_{f^{[n]}}(V,\ldots,V),
\end{align}
where $T^{H,H+V,H,\ldots,H}_{f^{[n]}}$ is the multilinear operator integral given by
Definition \ref{nosep}.
\end{thm}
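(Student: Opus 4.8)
The plan is to reduce the identity \eqref{remmoi} to the known derivative formula \eqref{dermoi} of Theorem \ref{dm} together with an integral (telescoping) identity for divided differences, exactly as is standard for Taylor remainder formulas of operator functions. First I would record the algebraic identity for divided differences that drives everything: for fixed $\lambda_1=\dots=\lambda_n=\mu$ and a variable first slot $\lambda_0=\la$, one has
\begin{align*}
f^{[n-1]}(\la,\mu,\dots,\mu)=\sum_{k=0}^{n-1}\frac{f^{(k)}(\mu)}{k!}(\la-\mu)^{k-(n-1)}\cdot(\text{lower terms})
\end{align*}
— more precisely, the Newton-form expansion of $f$ around $\mu$ gives
$$f(\la)=\sum_{k=0}^{n-1}\frac{f^{(k)}(\mu)}{k!}(\la-\mu)^k+f^{[n]}(\la,\mu,\dots,\mu)(\la-\mu)^n,$$
and this is the scalar shadow of the operator statement. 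The operator-level counterpart that I would actually use is that, by Theorem \ref{dm}, each Taylor term $\frac1{k!}\frac{d^k}{dt^k}f(H+tV)|_{t=0}$ equals $T^{H,\dots,H}_{f^{[k]}}(V,\dots,V)$, and $f(H+V)-f(H)$ itself has a first-order integral representation $f(H+V)-f(H)=T^{H+V,H}_{f^{[1]}}(V)$ coming from the fundamental theorem of calculus along the path $H+tV$.

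The key steps, in order: (1) start from $R_{n,H,f}(V)=f(H+V)-f(H)-\sum_{k=1}^{n-1}T^{H,\dots,H}_{f^{[k]}}(V,\dots,V)$ using \eqref{dermoi}; (2) apply the recursion/perturbation formula for multiple operator integrals, namely the identity
$$T^{H+V,H,\dots,H}_{f^{[k]}}(V,\dots,V)=T^{H,\dots,H}_{f^{[k]}}(V,\dots,V)+T^{H+V,H,\dots,H}_{f^{[k+1]}}(V,\dots,V),$$
which follows from the divided-difference resolvent identity $f^{[k]}(\lambda_0,\dots)-f^{[k]}(\lambda_0',\dots)=(\lambda_0-\lambda_0')f^{[k+1]}(\lambda_0,\lambda_0',\dots)$ by inserting $E_{H+V}$ and $E_H$ spectral decompositions in the leading slot; (3) telescope: summing this identity for $k=1,\dots,n-1$ collapses $\sum_{k=1}^{n-1}T^{H,\dots,H}_{f^{[k]}}(V,\dots,V)$ against $T^{H+V,H}_{f^{[1]}}(V)=f(H+V)-f(H)$, leaving exactly $T^{H+V,H,\dots,H}_{f^{[n]}}(V,\dots,V)$; (4) verify that every multiple operator integral appearing is well-defined in the required sense — here the hypothesis $\widehat{f^{(k)}}\in L^1(\R)$ for $k=1,\dots,n$ is precisely what lets us invoke Theorem \ref{thm:FT-L1-bound of MOI} (via Theorem \ref{thm:coincidence}) to guarantee $T^{H_0,\dots,H_k}_{f^{[k]}}\in\mB(\S^\infty\times\cdots\times\S^\infty,\S^\infty)$, so that in particular all terms make sense for the bounded operator $V$ and the manipulations in steps (2)–(3) are legitimate.

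The main obstacle, and the only genuinely delicate point, is justifying the perturbation identity in step (2) at the level of the iterated double limit defining $T$ in Definition \ref{nosep}, rather than merely formally: one must check that inserting the spectral projections $E_{H+V}$ and $E_H$ in the first variable and using $f^{[k]}(\lambda_0,\vec\lambda)-f^{[k]}(\lambda_0',\vec\lambda)=(\lambda_0-\lambda_0')f^{[k+1]}(\lambda_0,\lambda_0',\vec\lambda)$ commutes with the limits — this is where one leans on the separation-of-variables representation \eqref{phi representation} for $f^{[k]}$ and the continuity/boundedness estimates of Theorems \ref{thm:coincidence} and \ref{thm:FT-L1-bound of MOI}, so that all the sums and limits converge absolutely and can be rearranged. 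Since Theorem \ref{rm} is flagged as "known" with a pointer to \cite{ST19}, I would keep this verification brief, citing the relevant lemmas there for the perturbation formula for multiple operator integrals and for the strong-operator/norm continuity of $t\mapsto f(H+tV)$ established in Theorem \ref{dm}, and spell out only the telescoping computation in step (3) explicitly.
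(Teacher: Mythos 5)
Your proposal is correct and follows essentially the same route as the paper: you isolate the same perturbation identity $T^{H+V,H,\dots,H}_{f^{[k]}}(V,\dots,V)-T^{H,\dots,H}_{f^{[k]}}(V,\dots,V)=T^{H+V,H,\dots,H}_{f^{[k+1]}}(V,\dots,V)$ (which the paper imports directly from \cite[Theorems 3.3.8, 4.3.14]{ST19}), combine it with Theorem~\ref{dm}, and then iterate — your "telescoping" is the same argument as the paper's induction on $k$, just resummed. The one delicate point you flag (justifying the perturbation identity at the level of the iterated limits in Definition~\ref{nosep}) is exactly the step the paper outsources to \cite{ST19}, so your instinct to cite rather than reprove it matches the paper's treatment.
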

\begin{proof}
By \cite[Theorem 3.3.8]{ST19} for $k=0$ and \cite[Theorem 4.3.14]{ST19} for
$k\ge 1$,
\begin{align}
\label{pf}
&T^{H_0,H_1+V,H_2,\ldots,H_k}_{f^{[k]}}(V_1,\ldots,V_k)
-T^{H_0,\ldots,H_k}_{f^{[k]}}(V_1,\ldots,V_k)\\ \nonumber
&=T^{H_0,H_1+V,H_1,\ldots,H_k}_{f^{[k+1]}}( V_1,V,V_2,\ldots,V_k),
\end{align}
where $H_0,\ldots,H_k$ are self-adjoint operators in $\H$ and
$V,V_1,\ldots,V_k\in\mB(\H)_{\text{sa}}$. In particular,
\begin{align}\label{pf2}
T^{H,H+V,H,\ldots,H}_{f^{[k]}}(V,\ldots,V)-T^{H,\ldots,H}_{f^{[k]}}(V,\ldots,V)
=T^{H,H+V,H,\ldots,H}_{f^{[k+1]}}(V,\ldots,V).
\end{align}
Combining \eqref{pf2} with \eqref{dermoi} and proceeding by induction on $k$ yields \eqref{remmoi}.
\end{proof}

\section{Existence of the spectral shift function}
\label{sec3}

In this section we establish our main result.

\begin{thm}
\label{rsmain}
Let $n\in\N$, let $H$ be a self-adjoint operator in $\H$, and let $V\in\mB(\H)_{\text{sa}}$ be such that $V(H-iI)^{-1}\in\S^n$. Then, there exists $c_n>0$ and a real-valued function $\eta_n$
such that
\begin{align}\label{eta estimate}
\int_\R |\eta_n(x)|\,\frac{dx}{(1+|x|)^{n+\epsilon}}\leq c_n\,(1+\epsilon^{-1})(1+\norm{V})\nrm{V(H-iI)^{-1}}{n}^n\quad\text{for all } \epsilon >0
\end{align} and
\begin{align}
\label{tff}
\Tr(R_{n,H,f}(V))=\int_\R f^{(n)}(x)\eta_n(x)\,dx\,
\end{align}
for every $f\in\mW_n$. The locally integrable function $\eta_n$ is determined by \eqref{tff} uniquely up to a polynomial summand of degree at most $n-1$.
\end{thm}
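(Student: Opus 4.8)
The plan is to assemble Theorem~\ref{rsmain} from the two intermediate representations advertised in the introduction, namely Proposition~\ref{prop:SSM with growth} (the trace formula \eqref{trexist} with a Borel measure $\mu_n$ and the weighted bound on $(x-i)^{-n-\epsilon}\,d\mu_n$) and Proposition~\ref{prop:SSF locally} (a local trace formula for $f\in C_c^{n+1}$ involving a component of the Taylor remainder against $f$ times a complex weight). First I would invoke Proposition~\ref{prop:SSM with growth} to obtain the measure $\mu_n$ and the estimate $\|(\cdot-i)^{-n-\epsilon}\,d\mu_n\|\le c_n(1+\epsilon^{-1})\nrm{V(H-iI)^{-1}}{n}^n$. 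The main content is then to upgrade $\mu_n$ from a measure to an absolutely continuous measure with a locally integrable density $\eta_n$, and simultaneously to verify that $\eta_n$ may be taken real-valued. For absolute continuity I would compare \eqref{trexist} against the local formula of Proposition~\ref{prop:SSF locally}: testing both against $f\in C_c^{n+1}$ and using that $C_c^{n+1}\subset\mW_n$, one gets that the distribution $f^{(n)}\mapsto\int f^{(n)}\,d\mu_n$ agrees, modulo the weight rearrangements supplied by Theorem~\ref{thm:adding resolvents} applied now at order $n-1$, with integration of $f$ (times a weight) against an $L^1_{\mathrm{loc}}$ object; since $f$ ranges over a class whose $n$th derivatives are dense enough, $d\mu_n$ restricted to any compact set is given by an $L^1$ density, so $\mu_n$ is absolutely continuous with density $\eta_n\in L^1_{\mathrm{loc}}$.

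Once $d\mu_n = \eta_n\,dx$, the weighted bound on the measure becomes exactly \eqref{eta estimate}: $\int_\R|\eta_n(x)|\,(1+|x|)^{-n-\epsilon}\,dx \le c_n'\,\|(\cdot-i)^{-n-\epsilon}\,d\mu_n\| \le c_n(1+\epsilon^{-1})\nrm{V(H-iI)^{-1}}{n}^n$, absorbing the comparison constant between $(1+|x|)^{-n-\epsilon}$ and $|x-i|^{-n-\epsilon}$ into $c_n$. Real-valuedness I would extract by a conjugation-symmetry argument: for real-valued $f\in\mW_n$, $\overline{R_{n,H,f}(V)} = R_{n,H,f}(V)^{*}$ since $H,V$ are self-adjoint and $f$ is real, whence $\Tr(R_{n,H,f}(V))\in\R$; since $f^{(n)}$ is real, \eqref{tff} forces $\int f^{(n)}\,\overline{\eta_n} = \int f^{(n)}\,\eta_n$, and because the real $f^{(n)}$ arising from real $f\in C_c^{n+1}\subset\mW_n$ are dense in a suitable sense (they span, modulo polynomials, a set separating $L^1_{\mathrm{loc}}$ functions up to degree $n-1$), $\eta_n$ is real-valued up to a real polynomial of degree $\le n-1$, which we fix by a choice of representative. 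Finally, the uniqueness clause: if $\eta_n$ and $\eta_n'$ both satisfy \eqref{tff} for all $f\in\mW_n$, then $\int_\R f^{(n)}(x)\,(\eta_n-\eta_n')(x)\,dx = 0$ for every $f\in\mW_n$; since $\mW_n\supseteq C_c^{n+1}$ and the map $f\mapsto f^{(n)}$ sends $C_c^{n+1}$ onto $\{g\in C_c^{1}:\int x^j g(x)\,dx = 0,\ j=0,\dots,n-1\}$, the difference $\eta_n-\eta_n'$ annihilates all such $g$, which is precisely the classical characterization forcing $\eta_n-\eta_n'$ to agree a.e.\ with a polynomial of degree at most $n-1$.

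I expect the main obstacle to be the absolute-continuity step, i.e.\ genuinely producing a \emph{function} rather than a measure. This is where Proposition~\ref{prop:SSF locally} and the second application of Theorem~\ref{thm:adding resolvents} (at order $n-1$) do the real work, since that change of variables introduces the extra terms whose traces require the additional summability that finite-rank approximation (Proposition~\ref{prop36}, Lemmas~\ref{lem:approximating V by V_k}, \ref{prop:|Tr(R)|}, and the remainder decomposition of Lemma~\ref{lem:pth part of remainder}) is designed to handle; reconciling the global measure from Proposition~\ref{prop:SSM with growth} with the local density from Proposition~\ref{prop:SSF locally} on overlapping test functions, and checking the weight bookkeeping matches, is the delicate part. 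The real-valuedness and uniqueness steps, by contrast, are essentially formal once the $C_c^{n+1}$-density of test functions inside $\mW_n$ (Proposition~\ref{inclusions} and Remark~\ref{mncontains}) is in hand.
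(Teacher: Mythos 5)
Your proposal follows essentially the same route as the paper: you obtain the measure $\mu_n$ and the weighted bound from Proposition~\ref{prop:SSM with growth}, then use Proposition~\ref{prop:SSF locally} (whose proof is where Theorem~\ref{thm:adding resolvents} at order $n-1$, the remainder decomposition of Lemma~\ref{lem:pth part of remainder}, and the finite-rank approximation machinery all live) together with the Leibniz rule and repeated integration by parts to produce an $L^1_{\mathrm{loc}}$ density $\grave\eta_n$ with $\int f^{(n)}\grave\eta_n\,dx = \Tr(R_{n,H,f}(V))$ for $f\in C_c^{n+1}$, and then invoke the uniqueness clause of Proposition~\ref{prop:SSM with growth} to conclude $d\mu_n$ differs from $\grave\eta_n\,dx$ by a polynomial density, hence is absolutely continuous. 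Your real-valuedness and uniqueness arguments are also the paper's in substance — the paper sets $\eta_n := \Re(\acute\eta_n)$ and observes that for real $f\in\mW_n$ the left side of \eqref{tff} is real so the imaginary part of the integral vanishes (equivalent to your conjugation-symmetry argument), and the uniqueness is verified by the same distributional characterization ($T^{(n)}=0$ implies $T$ is a polynomial of degree $\le n-1$) which is a restatement of your moment-annihilation criterion. The one place where your sketch is materially vaguer than the paper is the absolute-continuity step: you say the comparison happens "modulo the weight rearrangements," but the concrete mechanism is Leibniz expansion of $(fu^p)^{(p+1)}\breve\eta_p$ followed by repeated integration by parts (using $f^{(k)}u^k\to 0$ at infinity) to concentrate everything on $f^{(n)}$, and it is worth being explicit that this is where the $L^1_{\mathrm{loc}}$-ness of the antiderivatives is used.
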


We start by outlining major steps and ideas of the proof of Theorem \ref{rsmain}.

In Proposition \ref{prop:SSM with growth} we establish a weaker version of \eqref{tff} with  measure $d\mu_n$ on the right hand side of \eqref{tff} in place of the desired absolutely continuous measure $\eta_n(x)\,dx$. The measure $\mu_n$, which we call the spectral shift measure, satisfies the bound \eqref{eta estimate}. In Proposition \ref{prop:SSF locally} we establish another weaker version of \eqref{tff} for compactly supported $f$, where on the left hand side we have a certain component of the remainder and on the right hand side in place of $f$ we have its product with some complex weight. By combining advantages of the results of Propositions \ref{prop:SSM with growth} and \ref{prop:SSF locally} we derive the trace formula \eqref{tff}.

One of our main tools is multilinear operator integration developed for Schatten class perturbations. We have onset technical obstacles since our perturbations are not compact.
To bridge the gap between existing results and our setting we impose suitable weights on the perturbations and involve multistage approximation arguments. In particular, the proof of
Proposition \ref{prop:SSF locally} requires two novel techniques. The first one is a new expression for the remainder $R_{n,H,f}(V)$ in terms of handy components that are continuous in $V$ in a very strong sense. The second one is an approximation argument that allows replacing relative Schatten $V$ by finite rank $V_k$ and strengthens convergence arguments present in the literature.

\subsection{Existence of the spectral shift measure}

The following result is our first major step in the proof of the representation \eqref{tff}.

\begin{prop}\label{prop:SSM with growth}
Let $n\in\N$, let $H$ be a self-adjoint operator in $\H$, and let $V\in\mB(\H)_{\text{sa}}$ be such that $V(H-iI)^{-1}\in\S^n$.
Then, there exists a Borel measure $\mu_n$ such that
\begin{align}\label{mu tilde}
	\Tr(R_{n,H,f}(V))=\int_\R f^{(n)}\,d\mu_n\,
\end{align}
for every $f\in\mW_n$ and
\begin{align}
\label{munfla}
d\mu_n(x)=u^n(x)\,d\nu_n(x)+\xi_n(x)\,dx,
\end{align}
where $\nu_n$ is a finite measure satisfying
\begin{align}\label{eq:nu bound}
\|\nu_n\|\leq c_n(1+\norm{V})\nrm{V(H-iI)^{-1}}{n}^n,
\end{align}
and $\xi_n$ is a continuous function
satisfying
\begin{align}\label{eq:xi bound}
	|\xi_n(x)|\leq c_n(1+\norm{V})\nrm{V(H-iI)^{-1}}{n}^n(1+|x|)^{n-1},\quad x\in\R,
\end{align}
for some constant $c_n>0$.
If $\tilde\mu_n$ is another locally finite Borel measure such that \eqref{mu tilde} holds for all $f\in C^{n+1}_c$, then $d\tilde{\mu}_n(x)=d\mu_n(x)+p_{n-1}(x)\,dx$, where $p_{n-1}$ is a polynomial of degree at most $n-1$.
\end{prop}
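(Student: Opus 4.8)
The plan is to reduce the formula \eqref{mu tilde} to an application of Theorem \ref{thm:adding resolvents} together with the trace formula already available in the Schatten class case. By Theorem \ref{rm}, for $f\in\mW_n\subset\mathfrak{B}_n$ we have $R_{n,H,f}(V)=T^{H+V,H,\ldots,H}_{f^{[n]}}(V,\ldots,V)$, and Theorem \ref{thm:adding resolvents}\eqref{weights}--\eqref{trace class} rewrite this trace-class operator as a sum of multiple operator integrals whose entries are the relative-Schatten perturbations $\tilde V=V(H-iI)^{-1}\in\S^n$, with the function $f$ replaced by $fu^p$ for $p=0,\ldots,n$. First I would apply $\Tr$ to the decomposition in Theorem \ref{thm:adding resolvents}\eqref{weights}, use the cyclicity of the trace to move the trailing factor $\tilde V_{j_p,n}$ around, and recognize each resulting summand — for the top term $p=n$, where all $H_j$ coincide with $H$ up to the first slot being $H+V$ — as (up to combinatorial bookkeeping) a trace of a Taylor-type remainder for the function $fu^n$ with the genuinely compact perturbation $\tilde V$. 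For that term one invokes the classical higher-order trace formula of \cite{PSS13} (see \cite[Section 5.5]{ST19}), which produces an integral $\int_\R (fu^n)^{(n)}(x)\,\eta(x)\,dx$ against an $L^1$ spectral shift function $\eta$ with $\|\eta\|_1\le c_n\|\tilde V\|_n^n$; expanding $(fu^n)^{(n)}$ by the Leibniz rule gives $f^{(n)}u^n$ plus lower-order terms $f^{(k)}$ times polynomials of degree $\le n-1$.

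The lower-order terms ($p<n$) in Theorem \ref{thm:adding resolvents}\eqref{weights} are multiple operator integrals of order $p<n$ multiplied by an extra $\S^n$-factor, so after taking the trace and using cyclicity they are traces of $p$-th order objects with $p$ copies of $\tilde V$ and one or more extra $\tilde V$ factors, i.e. products in $\S^{n/p}\cdot\S^{n/(n-p)}\subseteq\S^1$. Each such trace I would handle either by the same lower-order trace formula (when it can be recast as a genuine remainder for $fu^p$) or, more robustly, by Theorem \ref{thm:FT-L1-bound of MOI} together with the Fourier representation \eqref{ddfourier}: writing the multiple operator integral via $\widehat{(fu^p)^{(p)}}$, taking the trace under the integral sign, and expressing the result as integration of $(fu^p)^{(p)}$ against a finite Borel measure of total variation $\le c_n\|\tilde V\|_n^n$. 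Collecting all contributions and re-expanding every $(fu^p)^{(p)}$ via Leibniz, the $p=n$ contribution supplies the term $u^n\,d\nu_n$ with the bound \eqref{eq:nu bound}, while the $f^{(n)}$-free remainders and the lower-order Leibniz terms — each an $f^{(k)}$, $k<n$, times a polynomial of degree $\le n-1$, integrated against a finite measure — assemble, after integrating by parts $n-k$ times to convert $f^{(k)}$ into $f^{(n)}$, into the absolutely continuous part $\xi_n(x)\,dx$ with a continuous density satisfying the polynomial growth bound \eqref{eq:xi bound}. The integration-by-parts step is legitimate on $\mW_n$ because of the decay built into Definition \ref{fracw}\eqref{fracwii}, which kills all boundary terms.

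For the uniqueness statement, suppose $\tilde\mu_n$ is another locally finite Borel measure with $\Tr(R_{n,H,f}(V))=\int_\R f^{(n)}\,d\tilde\mu_n$ for all $f\in C^{n+1}_c$. Subtracting, the signed measure $\rho:=\mu_n-\tilde\mu_n$ satisfies $\int_\R f^{(n)}\,d\rho=0$ for every $f\in C^{n+1}_c$. Since the $n$-th derivatives of $C^{n+1}_c$ functions are exactly the $C^1_c$ functions $g$ with $\int x^j g(x)\,dx=0$ for $j=0,\ldots,n-1$ (integrate $g$ up $n$ times and adjust polynomial constants of integration so the antiderivative is compactly supported), $\rho$ annihilates every such $g$; a standard argument then forces $\rho$ to agree, as a distribution, with a polynomial of degree at most $n-1$, i.e. $d\tilde\mu_n(x)=d\mu_n(x)+p_{n-1}(x)\,dx$.

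The main obstacle I anticipate is purely bookkeeping-heavy: correctly matching the combinatorial sum in Theorem \ref{thm:adding resolvents}\eqref{weights} — with all its nested index sets $0<j_1<\cdots<j_p\le n$ and the trailing $\tilde V_{j_p,n}$ — against the structure of the classical remainder for $fu^p$, checking that the scalar coefficients and the permutations of the operator arguments under the cyclic trace conspire to give precisely a remainder (and not some spurious combination), and then keeping careful track of which Leibniz terms land in $u^n\,d\nu_n$ versus $\xi_n\,dx$. The analytic content — trace-class membership, the norm bounds, the integrability needed for integration by parts — is essentially handed to us by Theorems \ref{thm:adding resolvents}, \ref{thm:PSS Higher Order}, \ref{thm:FT-L1-bound of MOI} and the $\mathfrak{W}_n$ decay conditions, so the real work is organizing the algebra cleanly.
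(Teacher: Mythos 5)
Your first route for the top term $p=n$ does not go through. After applying Theorem~\ref{thm:adding resolvents}\eqref{weights} to $R_{n,H,f}(V)=T^{H+V,H,\ldots,H}_{f^{[n]}}(V,\ldots,V)$, the $p=n$ summand is $T^{H+V,H,\ldots,H}_{(fu^n)^{[n]}}(\tilde V,\ldots,\tilde V)$, where the leading operator slot is $H+V$, not $H+\tilde V$. A genuine Taylor remainder $R_{n,H,fu^n}(\tilde V)$ would require $H+\tilde V$ in that slot (cf.~\eqref{remmoi}), and cyclicity of the trace only permutes the argument slots $\tilde V^{j_i}$ — it cannot change the spectral measures defining the MOI. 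So this object is not a Taylor remainder for the compact perturbation $\tilde V$, and the classical trace formula from~\cite{PSS13} giving an $L^1$ density cannot be invoked. Indeed, if it could, one would immediately obtain absolute continuity of the top-degree part, and the whole point of the measure decomposition \eqref{munfla} (and of the separate Proposition~\ref{prop:SSF locally}) would be moot: absolute continuity is precisely the hard part of Theorem~\ref{rsmain} and is not achievable at this stage.

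Your fallback (``more robustly, by Theorem~\ref{thm:FT-L1-bound of MOI} together with the Fourier representation \eqref{ddfourier}'') is the right idea, and is essentially what the paper does, but it needs the \emph{sup-norm} bound of Lemma~\ref{lem:Hahn Banach Borel measures} (ultimately Theorem~\ref{thm:PSS Higher Order}), not the $\|\widehat{(fu^p)^{(p)}}\|_1$ bound: the Riesz--Markov step that produces a Borel measure requires a functional bounded on $C_0$, i.e.\ by $\|(fu^p)^{(p)}\|_\infty$. With that substitution, each $p$ yields a finite Borel measure $\breve\mu_p$ with $\|\breve\mu_p\|\le c_n\|\tilde V\|_n^n$ against which $(fu^p)^{(p)}$ is integrated; the Leibniz expansion and iterated integration by parts (justified by \eqref{fuc0} and Definition~\ref{fracw}\eqref{fracwii}) then produce exactly \eqref{munfla}, where the $p=n$ contribution remains only a finite measure $u^n\,d\nu_n$, and the lower-order terms yield the continuous polynomially-bounded density $\xi_n$ via the elementary estimate \eqref{add u^-1}. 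Your uniqueness argument is correct and coincides with the paper's distributional argument.
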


To prove Proposition \ref{prop:SSM with growth} we need the estimate stated below.

\begin{lem}\label{lem:Hahn Banach Borel measures}
Let $k\in\N$, let $H_0,\ldots,H_k$ be self-adjoint operators in $\H$, let  $\alpha_1\ldots,\alpha_k\in(1,\infty)$ be such that $\tfrac{1}{\alpha_1}+\ldots+\tfrac{1}{\alpha_k}=1$, and
let $B_j\in\S^{\alpha_k}$, $j=1,\dots,k$.
Then, there exists $c_\alpha:=c_{\alpha_1,\dots,\alpha_k}>0$ such that  for multiple operator integrals given by Definition \ref{nosep},
\begin{align*}
|\Tr(T^{H_k,H_1,\ldots,H_k}_{f^{[k]}}(B_1,\ldots,B_k))|&\leq c_\alpha\supnorm{f^{(k)}}\nrm{B_1}{\alpha_1}\cdots\nrm{B_k}{\alpha_k}\qquad
(\widehat{f^{(k)}}\in L^1)
\end{align*}
and
\begin{align*}
|\Tr(B_1T^{H_0,\ldots,H_{k-1}}_{f^{[k-1]}}(B_2,\ldots,B_k))|&\leq c_\alpha\supnorm{f^{(k-1)}}\nrm{B_1}{\alpha_1}\cdots\nrm{B_k}{\alpha_k}\qquad (f\in C_b^{k-1}).
\end{align*}
Consequently, there exist unique (complex) Borel measures $\mu_1,\mu_2$ with total variation bounded by $c_\alpha\nrm{B_1}{\alpha_1}\cdots\nrm{B_k}{\alpha_k}$ such that
\begin{align*}
\Tr(T^{H_k,H_1,\ldots,H_k}_{f^{[k]}}(B_1,\ldots,B_k))&=\int_\R f^{(k)} d\mu_1\qquad
(\widehat{f^{(k)}}\in L^1)
\end{align*}
and
\begin{align*}
\Tr(B_1 T^{H_0,\ldots,H_{k-1}}_{f^{[k-1]}}(B_2,\ldots,B_k))&=\int_\R f^{(k-1)}d\mu_2\,\qquad (f\in C_0^{k-1}).
\end{align*}
\end{lem}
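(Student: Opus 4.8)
The plan is to first establish the two pointwise trace estimates, and then deduce the existence and uniqueness of the representing measures by a Riesz–Markov/Hahn–Banach argument applied to the functional $f^{(k)}\mapsto\Tr(\cdots)$ on a suitable subspace of $C_0(\R)$.

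For the first estimate, I would combine Theorem \ref{thm:PSS Higher Order} with the trace-norm estimate on $\S^1$. Since $\tfrac{1}{\alpha_1}+\cdots+\tfrac{1}{\alpha_k}=1$, Theorem \ref{thm:PSS Higher Order} (applied with $\alpha=1$) gives $T^{H_0,\ldots,H_k}_{f^{[k]}}(B_1,\ldots,B_k)\in\S^1$ with $\|T^{H_0,\ldots,H_k}_{f^{[k]}}(B_1,\ldots,B_k)\|_1\le c_{\alpha_1,\ldots,\alpha_k}\supnorm{f^{(k)}}\nrm{B_1}{\alpha_1}\cdots\nrm{B_k}{\alpha_k}$; then $|\Tr(\cdot)|\le\|\cdot\|_1$ yields the claim for $f\in C_b^k$. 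For the second estimate, I would write $B_1 T^{H_0,\ldots,H_{k-1}}_{f^{[k-1]}}(B_2,\ldots,B_k)$ as a product, note that $T^{H_0,\ldots,H_{k-1}}_{f^{[k-1]}}(B_2,\ldots,B_k)\in\S^{\beta}$ with $\tfrac1\beta=\tfrac1{\alpha_2}+\cdots+\tfrac1{\alpha_k}$ by Theorem \ref{thm:PSS Higher Order} (here one needs $\beta\in(1,\infty)$, which holds since $\tfrac1\beta=1-\tfrac1{\alpha_1}\in(0,1)$), and apply Hölder's inequality for Schatten norms with the conjugate pair $(\alpha_1,\beta)$ so that $B_1\cdot(\text{that operator})\in\S^1$ with the stated norm bound; again $|\Tr|\le\|\cdot\|_1$ finishes it.

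For the consequence, I would proceed as follows. The map $\Phi_1: g\mapsto\Tr(T^{H_0,\ldots,H_k}_{f^{[k]}}(B_1,\ldots,B_k))$, where $g=f^{(k)}$, is well-defined on the subspace $\{f^{(k)}: f\in C_0^k\}$ of $C_0(\R)$: indeed if $f^{(k)}=\tilde f^{(k)}$ then $f-\tilde f$ is a polynomial of degree $\le k-1$, and the divided difference $(f-\tilde f)^{[k]}$ of a polynomial of degree $<k$ vanishes identically, so the multiple operator integral is unchanged (one must check the polynomial case lies in the domain, e.g. via the $C_b^k$ estimate or by direct computation on monomials). By the first estimate, $\Phi_1$ is a bounded linear functional on this subspace with norm $\le c_\alpha\nrm{B_1}{\alpha_1}\cdots\nrm{B_k}{\alpha_k}$; extend it by Hahn–Banach to all of $C_0(\R)$ without increasing the norm, and apply the Riesz representation theorem to obtain a unique complex Borel measure $\mu_1$ with $\|\mu_1\|\le c_\alpha\nrm{B_1}{\alpha_1}\cdots\nrm{B_k}{\alpha_k}$ and $\Phi_1(g)=\int_\R g\,d\mu_1$. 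Uniqueness of $\mu_1$ (not just of the extension) follows because the subspace $\{f^{(k)}:f\in C_0^k\}$ is dense in $C_0(\R)$: given any $g\in C_c^\infty$, the $k$-fold antiderivative of $g$ that decays at $-\infty$ will not in general lie in $C_0$, so instead I would argue density of $\{f^{(k)}: f\in C_c^{k}\}$ directly — any $g\in C_c(\R)$ can be uniformly approximated by $C_c^\infty$ functions $g_j$ with $\int g_j=0$, which then have compactly supported $k$-fold antiderivatives, giving density of the relevant subspace in $C_c(\R)$ and hence in $C_0(\R)$. The same argument applied to $\Phi_2: f^{(k-1)}\mapsto\Tr(B_1 T^{H_0,\ldots,H_{k-1}}_{f^{[k-1]}}(B_2,\ldots,B_k))$, using the second estimate and the fact that the divided difference of order $k-1$ annihilates polynomials of degree $<k-1$, produces $\mu_2$.

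The main obstacle I anticipate is the well-definedness and density bookkeeping in the Riesz–Markov step: one must be careful that (a) the functional only depends on $f^{(k)}$ (resp. $f^{(k-1)}$), which requires knowing the multiple operator integral of a polynomial divided difference vanishes and that such polynomials are admissible inputs, and (b) the space of available test functions $\{f^{(k)}: f\in C_0^k\}$ (or $C_c^k$) is dense in $C_0(\R)$, since an antiderivative of a decaying function need not decay — this is precisely why one restricts to test functions with vanishing integral at the appropriate stage. The actual analytic estimates are immediate from Theorem \ref{thm:PSS Higher Order} and Hölder's inequality.
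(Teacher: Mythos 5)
Your plan for the \emph{second} trace estimate is correct and matches what one would do: $T^{H_0,\ldots,H_{k-1}}_{f^{[k-1]}}(B_2,\ldots,B_k)\in\S^\beta$ with $\tfrac1\beta=1-\tfrac1{\alpha_1}\in(0,1)$ by Theorem~\ref{thm:PSS Higher Order}, then H\"older and $|\Tr|\le\|\cdot\|_1$. The Riesz--Markov step is also the right idea, and is exactly what the paper does (modulo the small slip noted below).

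However, the argument for the \emph{first} estimate has a genuine gap. You invoke ``Theorem~\ref{thm:PSS Higher Order} (applied with $\alpha=1$)'' to conclude $T^{H_0,\ldots,H_k}_{f^{[k]}}(B_1,\ldots,B_k)\in\S^1$ with the corresponding norm bound, but Theorem~\ref{thm:PSS Higher Order} explicitly requires $\alpha\in(1,\infty)$; the endpoint $\alpha=1$ is excluded, and not merely as a matter of formulation. Already for $k=1$ the bound $\|f(H+V)-f(H)\|_1\le c\,\|f'\|_\infty\|V\|_1$ is false for general $f\in C^1_b$ (Farforovskaya-type counterexamples), and for $k\ge2$ the $\S^1$-boundedness of $T^{H_0,\ldots,H_k}_{f^{[k]}}$ on $\S^{\alpha_1}\times\cdots\times\S^{\alpha_k}$ with $\sum_j 1/\alpha_j=1$ is not what Theorem~\ref{thm:PSS Higher Order} asserts. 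The trace bound in the lemma is a strictly weaker statement than an $\S^1$-norm bound, and it is precisely the substance of \cite[Theorem~5.3 and Remark~5.4]{PSS13} (extended to distinct $H_0,\ldots,H_k$ in \cite[Theorem~4.3.10]{ST19}), which the paper cites for both inequalities. In other words, the first estimate cannot be reduced to $|\Tr|\le\|\cdot\|_1$ plus the $\S^\alpha$ estimate: it requires the separate, deeper trace-functional bound from \cite{PSS13}.

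One small additional issue in the Riesz--Markov step: to show density of $\{f^{(k)}:f\in C^k_c\}$ in $C_0(\R)$ you approximate $g\in C_c$ by $g_j\in C^\infty_c$ with $\int g_j=0$ and claim these have compactly supported $k$-fold antiderivatives. That needs not just $\int g_j=0$ but the vanishing of \emph{all} moments $\int x^m g_j(x)\,dx$ for $m=0,\ldots,k-1$; otherwise only the first antiderivative is compactly supported. This is easily repaired (e.g.\ subtract a suitable polynomial times a bump supported far away, or antidifferentiate and cut off over a long interval), but as written the argument is incomplete for $k\ge2$.
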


\begin{proof}
The first assertion of the lemma follows from \cite[Theorem 5.3 and Remark 5.4]{PSS13}, H\"{o}lder's inequality, and \cite[Theorem 4.3.10]{ST19}. The second assertion of the lemma is subsequently obtained by the Riesz--Markov representation theorem for a bounded linear functional on the space $C_0(\R)$.
\end{proof}

\begin{proof}[Proof of Proposition \ref{prop:SSM with growth}]
Let $n\geq2$.
Using \eqref{remmoi} and Theorem \ref{thm:adding resolvents}\eqref{weights}, we obtain
\begin{align}
\label{eq:new expansion of remainder}
\nonumber
R_{n,H,f}(V)=&T^{H,H+V,H,\ldots,H}_{f^{[n]}}(V,\ldots,V)\\
	=&\sum_{p=0}^n\sum_{\substack{j_1,\ldots,j_{p}\geq1,j_{p+1}\geq0\\j_1+\ldots+j_{p+1}=n}}(-1)^{n-p}\,
T^{H,H_{j_1},H,\ldots,H}_{(fu^p)^{[p]}}(\tilde{V}^{j_1},\ldots,\tilde{V}^{j_{p}})\tilde{V}^{j_{p+1}},
\end{align}
where $H_{1}=H+V$ and $H_{j_1}=H$ for $j_1\neq1$, and in which the first factor of $\tilde V$ in the first input of the multilinear
operator integral should be interpreted as $V(H+V-iI)^{-1}$.
By the second resolvent identity,
$$\|V(H+V-iI)^{-1}\|_n\leq (1+\norm{V})\|V(H-iI)^{-1}\|_n.$$
By the definition of $\mW_n$ (see Definition \ref{fracw}), 
we obtain $\widehat{(fu^p)^{(p)}}\in L^1(\R)$
for every $f\in\mW_n$, $p=0,\dots,n$. Hence, by Lemma \ref{lem:Hahn Banach Borel measures}
applied to each term of \eqref{eq:new expansion of remainder},
there exist unique Borel measures $\breve\mu_0,\ldots,\breve\mu_n$ such that
\begin{align}
\label{mupbound}
\|\breve\mu_p\|\le C_n\,(1+\norm{V})\,\|V(H-iI)^{-1}\|_n^n
\end{align}
and
\begin{align}\label{eq:Tr(R) in measures 1}
\Tr( R_{n,H,f}(V))=&\sum_{p=0}^n\int (fu^p)^{(p)}\,d\breve\mu_p
\end{align}
for every $f\in\mW_n$, $n\ge 2$.

Let $n=1$. Denote $H_t=H+tV$.
By Theorem \ref{dm}, continuity of the transformation $t\mapsto T^{H_t,H_t}_{f^{[1]}}(V)$
(see \cite[Proposition 3.3.9]{ST19}), and the fundamental theorem of calculus,
\begin{align*}
R_{1,H,f}(V)
=f(H+V)-f(H)=\int_0^1 T^{H_t,H_t}_{f^{[1]}}(V)\,dt
\end{align*}
for $f\in\mW_1$.
By \eqref{weight1} of Theorem \ref{thm:adding resolvents}\eqref{weight} applied to $T^{H_t,H_t}_{f^{[1]}}(V)$  we obtain
\begin{align}\label{eq:no trace yet}
R_{1,H,f}(V)
=\int_0^1 (T^{H_t,H_t}_{(fu)^{[1]}}(V(H_t-iI)^{-1})-f(H_t)V(H_t-iI)^{-1})\,dt.
\end{align}
Noticing that
\begin{align*}
\sup_{t\in [0,1]}\|V(H_t-iI)^{-1}\|_1\le(1+\|V\|)\|V(H-iI)^{-1}\|_1,
\end{align*}
using the property of the double operator integral $\Tr(T^{H,H}_{g^{[1]}}(V))=\Tr(g'(H)V)$,
and applying H\"{o}lder's inequality and the Riesz--Markov representation theorem
completes the proof of \eqref{eq:Tr(R) in measures 1} for $n=1$.

Let $n\in\N$. Applying a higher order differentiation product rule on the right hand side of \eqref{eq:Tr(R) in measures 1} gives
\begin{align}
\Tr( R_{n,H,f}(V))&=\sum_{p=0}^n\sum_{k=0}^p\vect{p}{k}\frac{p!}{k!}\int f^{(k)}u^k\,d\breve\mu_p\nonumber\\
	&=\sum_{k=0}^{n-1}\int f^{(k)}u^k\,d\grave{\mu}_k+\int f^{(n)}u^n\,d\nu_n,\label{Tr(R) in integrals}
\end{align}
for some Borel measures $\grave{\mu}_0,\ldots\grave\mu_{n-1},\nu_n$ satisfying
\begin{align}
\label{normnun} 
\|\grave\mu_0\|,\dots,\|\grave\mu_{n-1}\|,\|\nu_n\|\le \tilde C_n\,(1+\norm{V})\,\|V(H-iI)^{-1}\|_n^n.
\end{align}
Integrating by parts in \eqref{Tr(R) in integrals} and applying
\begin{align}
\label{partbounds}
\lim\limits_{x\rightarrow\pm\infty} f^{(k)}(x)u^k(x)=0,\quad k=0,\dots,n-1,
\end{align}
yields
\begin{align*}
\Tr(R_{n,H,f}(V))=-\sum_{k=0}^{n-1}\int_{-\infty}^\infty (f^{(k+1)}u^k+kf^{(k)}u^{k-1})(x)\,\grave{\mu}_k((-\infty,x))\,dx+\int f^{(n)}u^n\,d\nu_n.
\end{align*}
Since $$f^{(k)}u^{k-1}\in L^1(\R),\quad k=1,\dots,n,$$ we rearrange the terms above to obtain
\begin{align}
\label{bp1}
\Tr(R_{n,H,f}(V))=\sum_{k=1}^{n}\int f^{(k)}(x)u^{k-1}(x)\,\tilde\xi_k(x)\,dx+\int f^{(n)}u^n\,d\nu_n,
\end{align}
where $\tilde\xi_k$ are continuous functions defined by
\begin{align*}
&\tilde{\xi}_k(x)=-\grave\mu_{k-1}((-\infty,x))-k\,\grave\mu_k((-\infty,x)),\quad k=1,\dots,n-1,\\
&\tilde\xi_n(x)=-\grave\mu_{n-1}((-\infty,x)),
\end{align*}
so that
\begin{align}\label{eq:xi_k bound}
	\big\|\tilde\xi_k\big\|_\infty\leq c_{n,k}(1+\norm{V})\nrm{V(H-iI)^{-1}}{n}^n,\quad k=1,\dots,n.
\end{align}
By a repeated partial integration in \eqref{bp1} and application of \eqref{partbounds}, we obtain
\begin{align*}
\Tr(R_{n,H,f}(V))=\int_\R f^{(n)}\,d\mu_n\quad (f\in\mW_n)
\end{align*}
with
\begin{align}
\label{mugravexi}
d\mu_n(x)=u^n(x)\,d\nu_n(x)+\xi_n(x)\,dx,
\end{align}
where
\begin{align}\label{eq:def xi}	
\xi_n(s_0):=\sum_{k=1}^n(-1)^{n-k}\int_0^{s_0}ds_1\cdots\int_0^{s_{n-k-1}}\,
u^{k-1}(s_{n-k})\,\tilde\xi_k(s_{n-k})\,ds_{n-k}.
\end{align}
The function $\xi_n$ given by \eqref{eq:def xi} is continuous. To confirm \eqref{eq:xi bound} we note that,
for all $m\in\N$,
\begin{align}
\label{add u^-1}
\sup_{x\in\R}\left|u^{-m}(x)\int_0^xg(t)\,dt\right|\leq \sup_{x\in\R}\left(\left|\frac{x}{u(x)}\,\right|
\sup_{|t|\leq|x|}|u^{1-m}(x)g(t)|\right)\le\|u^{1-m}g\|_\infty.
\end{align}
By applying \eqref{add u^-1} $(n-k)$-times in \eqref{eq:def xi} and using the bound \eqref{eq:xi_k bound}, we obtain
\begin{align}\label{xiprelbound}
|\xi_n(x)|\leq c_n(1+\norm{V})\nrm{V(H-iI)^{-1}}{n}^n(1+|x|)^{n-1},\quad x\in\R.
\end{align}
We have thereby proven the first part of the proposition.

To prove the second part of the proposition, we let $\tilde{\mu}_n$ be a locally finite measure such that \eqref{mu tilde} holds for all $f\in C_c^{n+1}$
and denote
$$\rho_n:=\mu_n-\tilde\mu_n.$$
Then,
\begin{align}\label{eq:measure nth derivative}
	\int f^{(n)}\,d\rho_n=0\qquad(f\in C_c^{n+1}).
\end{align}
We are left to confirm that
\begin{align}
\label{goal}
d\rho_n(x)=p_{n-1}(x)\,dx,
\end{align}
where $p_{n-1}$ is a polynomial of degree at most $n-1$. Consider the distribution $T$ defined by 	
$$T(g):=\int g\, d\rho_n$$ for all $g\in C^\infty_c(\R)$.
By \eqref{eq:measure nth derivative} and the definition of the derivative of a distribution, $T^{(n)}=0$.
Since the primitive of a distribution is unique up to an additive constant (see, e.g., \cite[Theorem 3.10]{gwaiz}),
by an inductive argument (see, e.g., \cite[Example 2.21]{gwaiz}) we obtain \eqref{goal}.
\end{proof}

\subsection{Alternative trace formula}

The following result is our second major step in the proof of the representation \eqref{tff}.
It provides an alternative to \eqref{tff} with weighted $f$ on the right hand side.
It also provides an alternative to \eqref{mu tilde} with weighted $f$ on the right hand side, thereby effectively replacing the measure $\mu_n$ with functions $\breve\eta_0,\ldots,\breve\eta_{n-1}\in L^1_{\text{loc}}$.
\begin{prop}\label{prop:SSF locally}
Let $n\in\N$, $n\ge 3$, let $H$ be a self-adjoint operator in $\H$, and let $V\in\mB(\H)_{\text{sa}}$ satisfy $V(H-iI)^{-1}\in\S^n$. Then, for every $p=0,\dots,n-1$,
there exists $\breve\eta_p\in L^1_{\text{loc}}$ such that
\begin{align}
\label{trfp}
\Tr(R_{n,H,f}(V))=\sum_{p=0}^{n-1}(-1)^{n-1-p}\int_\R (fu^p)^{(p+1)}(x)\breve\eta_p(x)\,dx
\end{align}
for all $f\in C_c^{n+1}$.
\end{prop}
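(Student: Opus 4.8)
The plan is to start from the decomposition of the Taylor remainder into weighted multiple operator integrals obtained in Proposition~\ref{prop:SSM with growth}, namely
\begin{align*}
R_{n,H,f}(V)=\sum_{p=0}^{n}R^p_{n,H,f}(V),
\end{align*}
and to further split off a trace-class term. The key idea is that each summand $R^p_{n,H,f}(V)$ for $p\le n-1$ already has the right shape: it is a multiple operator integral of order $p$ applied to $\tilde V$'s, multiplied on the right by some power $\tilde V^{j_{p+1}}$ with $j_{p+1}\ge 1$, so that the total number of relative-resolvent factors exceeds $p$ and H\"older's inequality together with Theorem~\ref{thm:PSS Higher Order} (and Theorems~\ref{thm:coincidence}, \ref{thm:FT-L1-bound of MOI} when $j_{p}=n$, i.e.\ the trailing factor is absent) puts it in $\S^1$. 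First I would therefore apply Theorem~\ref{thm:adding resolvents} a second time, now to the order-$n$ term $R^n_{n,H,f}(V)=T^{H+V,H,\ldots,H}_{(fu^n)^{[n]}}(\tilde V,\ldots,\tilde V)$, rewriting it via part~\eqref{weights} (applied to the function $fu^n\in\mathfrak B_n$, or after an approximation through $\mathfrak b_n$) so as to extract from it a piece that is a multiple operator integral of order at most $n-1$ with an extra trailing resolvent factor. Collecting all pieces, I expect to reach an identity of the form
\begin{align*}
R_{n,H,f}(V)=\sum_{p=0}^{n-1}(-1)^{n-1-p}\,\big(\text{order-}p\text{ MOI in }\tilde V\text{'s}\big)\,\tilde V + (\text{a term whose trace vanishes or is handled separately}),
\end{align*}
where each order-$p$ MOI carries the symbol $(fu^p)^{[p]}$ so that its trace pairs against $(fu^p)^{(p+1)}$ after one application of Lemma~\ref{lem:Hahn Banach Borel measures} in the form that produces an $L^1_{\mathrm{loc}}$ density.

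The second main step is the passage from finite-rank $V$ to relative Schatten $V$. For finite-rank $V$ all traces are literally defined and the cyclicity of the trace plus the algebraic divided-difference identities \eqref{eq:adding one weight}--\eqref{eq:ddweights} give \eqref{trfp} by a direct computation; this is where I would invoke the announced auxiliary results, i.e.\ the finite-rank statement of Proposition~\ref{prop36}, the representation of the remainder in ``handy components'' from Lemma~\ref{lem:pth part of remainder}, the approximation of $V$ by finite-rank $V_k$ with $\tilde V_k\to\tilde V$ in $\S^n$ from Lemma~\ref{lem:approximating V by V_k}, and the continuity of the Taylor remainder in $V$ from Lemma~\ref{prop:|Tr(R)|}. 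Concretely, I would fix $f\in C_c^{n+1}$, choose finite-rank $V_k$ with $V_k(H-iI)^{-1}\to V(H-iI)^{-1}$ in $\S^n$ and $\|V_k\|$ bounded, write \eqref{trfp} for each $V_k$, and pass to the limit: the left-hand side converges because $\Tr(R_{n,H,f}(\cdot))$ is continuous under this convergence (Lemma~\ref{prop:|Tr(R)|}), while on the right-hand side each order-$p$ multiple operator integral depends continuously (in $\S^1$-norm, hence in trace) on its resolvent-weighted arguments by Theorem~\ref{thm:PSS Higher Order} and H\"older, so the densities $\breve\eta_p^{(k)}$ converge in $L^1_{\mathrm{loc}}$ to some $\breve\eta_p$; one then identifies the limit densities and checks they do not depend on the approximating sequence because the pairing with $(fu^p)^{(p+1)}$, $f\in C_c^{n+1}$, determines them up to polynomials of controlled degree, which can be absorbed.

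The main obstacle I anticipate is bookkeeping the trace-class membership and the index ranges after the second application of Theorem~\ref{thm:adding resolvents}: one must verify that every MOI appearing carries strictly more relative-resolvent factors than its order, so that the H\"older exponents $\alpha_m=n/(j_m-j_{m-1})$ stay in $(1,\infty)$ and Theorem~\ref{thm:PSS Higher Order} applies, and one must handle separately the boundary cases where the trailing factor is absent (using Theorems~\ref{thm:coincidence} and~\ref{thm:FT-L1-bound of MOI} instead, which is why $f\in C_c^{n+1}$, hence $\widehat{(fu^p)^{(p)}}\in L^1$, is assumed here rather than merely $f\in\mathfrak W_n$). A secondary subtlety is that, unlike in Proposition~\ref{prop:SSM with growth}, the left-hand side of \eqref{trfp} now involves only ``a certain component of the remainder'' in the intermediate steps — I would keep track of which component via Lemma~\ref{lem:pth part of remainder} and only at the end reassemble the full remainder, using that the discarded components are traceless or contribute to the polynomial ambiguity. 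Once the finite-rank identity and the two convergence statements are in place, the extension to all $f\in C_c^{n+1}$ and the definition of the $\breve\eta_p$ are routine, and the proposition follows.
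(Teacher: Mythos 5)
There is a genuine gap in your first step. You propose to start from the decomposition $R_{n,H,f}(V)=\sum_{p=0}^{n}R^p_{n,H,f}(V)$ of Proposition~\ref{prop:SSM with growth} and then apply Theorem~\ref{thm:adding resolvents}\eqref{weights} \emph{again} to the top term $R^n_{n,H,f}(V)=T^{H+V,H,\ldots,H}_{(fu^n)^{[n]}}(\tilde V,\ldots,\tilde V)$ so as to ``extract a piece of order at most $n-1$.'' But Theorem~\ref{thm:adding resolvents}\eqref{weights} does not lower the maximal order of the multiple operator integral: applied to an order-$n$ MOI with symbol $g^{[n]}$, it produces terms of orders $0,\ldots,n$, and the $p=n$ summand is again an order-$n$ MOI, now with symbol $(gu^n)^{[n]}$ and arguments $\tilde V(H-iI)^{-1}$ in place of $\tilde V$. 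Iterating this never terminates and never removes the order-$n$ piece, and there is no reason for that piece to be traceless. So the identity $R_{n,H,f}(V)=\sum_{p=0}^{n-1}(-1)^{n-1-p}(\text{order-}p\text{ MOI})\tilde V+(\text{traceless})$ you ``expect to reach'' is not obtained by this route.

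The paper avoids this by starting from a \emph{different} algebraic identity, not from $R_{n,H,f}(V)=T^{H+V,H,\ldots,H}_{f^{[n]}}(V,\ldots,V)$. Writing $R_{n,H,f}(V)=R_{n-1,H,f}(V)-\frac{1}{(n-1)!}\frac{d^{n-1}}{dt^{n-1}}f(H+tV)\big|_{t=0}$ and using \eqref{dermoi}, \eqref{remmoi} gives
\begin{align*}
R_{n,H,f}(V)=T^{H+V,H,\ldots,H}_{f^{[n-1]}}(V,\ldots,V)-T^{H,\ldots,H}_{f^{[n-1]}}(V,\ldots,V),
\end{align*}
a difference of two MOIs of order $n-1$. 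Applying Theorem~\ref{thm:adding resolvents}\eqref{weights} to each of these (this is exactly Lemma~\ref{lem:pth part of remainder}) yields $\tilde R^p_{n,H,f}(V)$ for $p=0,\ldots,n-1$ only, with the symbol $(fu^p)^{[p]}$ and an exact decomposition with no leftover $p=n$ term. The price is that each $\tilde R^p_{n,H,f}(V)$ now carries a total of $n-1$, not $n$, copies of $\tilde V$, so your heuristic that ``every MOI appearing carries strictly more relative-resolvent factors than its order, so that the H\"older exponents stay in $(1,\infty)$'' does not apply here, and the $\tilde R^p_{n,H,f}(V)$ need \emph{not} individually be trace class for general $V$ with only $\tilde V\in\S^n$. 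This is precisely why the finite-rank case (Proposition~\ref{prop:SSF locally V_k}) and the approximation Lemmas~\ref{lem:approximating V by V_k} and~\ref{prop:|Tr(R)|} are indispensable, and why the telescoping in Lemma~\ref{prop:|Tr(R)|}, which produces one extra factor of $\tilde V_k-\tilde W$ (bringing the count of $\S^n$-factors up to $n$), is the crucial trace-class mechanism. Your second step (finite-rank $\to$ relative Schatten, via the named lemmas and $L^1_{\mathrm{loc}}$-Cauchy densities) is the right idea, but it only works after replacing your first step by the identity above.
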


In order to prove \eqref{trfp} firstly we decompose $R_{n,H,f}(V)$ into more convenient components for which we can derive trace formulas by utilizing the method of the previous subsection, partial integration, and approximation arguments.


\begin{lem}\label{lem:pth part of remainder}
Let $H$ be a self-adjoint operator in $\H$, let $V\in\mB(\H)_{\text{sa}}$, let $n\in\N$, and let $f\in C_c^{n+1}$. Then, $$R_{n,H,f}(V)=\sum_{p=0}^{n-1} (-1)^{n-1-p}\tilde R^p_{n,H,f}(V),$$
where
\begin{align}
\label{R0}
\nonumber
&\tilde R^0_{1,H,f}(V):=f(H+V)-f(H),\\
&\tilde R^0_{n,H,f}(V):=f(H)V((H+V-iI)^{-1}-(H-iI)^{-1})\tilde V^{n-2}
\end{align}
for $n\geq 2$ and
\begin{align}
\label{rpdef}
\tilde R^p_{n,H,f}(V):=\sum_{\substack{j_1,\ldots,j_{p}\geq1,j_{p+1}\geq0\\j_1+\ldots+j_{p+1}=n-1}}\!\, \Big(& T^{H,H_{j_1},H,\ldots,H}_{(fu^p)^{[p]}}(V(H+V-iI)^{-1}\tilde{V}^{j_1-1}, \ldots,\tilde{V}^{j_p})\,\tilde{V}^{j_{p+1}}\nonumber\\
&-T^{H,\ldots,H}_{(fu^p)^{[p]}}(\tilde{V}^{j_1},\ldots,\tilde{V}^{j_p})\tilde{V}^{j_{p+1}}\Big)
\end{align}
for $ p=1,\dots, n-1$, with $H_1=H+V$ and $H_{j_1}=H$ for $j_1\neq1$.
\end{lem}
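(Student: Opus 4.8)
The plan is to prove the decomposition of $R_{n,H,f}(V)$ by massaging the weighted multiple operator integral representation already available from Theorem \ref{thm:adding resolvents}, but applied to the \emph{order $n-1$} objects appearing inside the Taylor remainder rather than to $f^{[n]}$ directly. Recall that by Theorem \ref{rm} we have $R_{n,H,f}(V)=T^{H+V,H,\ldots,H}_{f^{[n]}}(V,\ldots,V)$, and by the telescoping identity \eqref{pf2} (with $k=n-1$) this equals $T^{H+V,H,\ldots,H}_{f^{[n-1]}}(V,\ldots,V)-T^{H,\ldots,H}_{f^{[n-1]}}(V,\ldots,V)$, an expression involving only $n-1$ copies of $V$ under operator integrals of order $n-1$. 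The first step is therefore to rewrite $R_{n,H,f}(V)$ in this differenced form.

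Next I would apply Theorem \ref{thm:adding resolvents}\eqref{weights} — with $n$ there replaced by $n-1$ — separately to each of the two terms $T^{H+V,H,\ldots,H}_{f^{[n-1]}}(V,\ldots,V)$ and $T^{H,\ldots,H}_{f^{[n-1]}}(V,\ldots,V)$. Since $f\in C_c^{n+1}\subset C_c^{n}$, Theorem \ref{thm:adding resolvents}\eqref{weights} is applicable, and it expresses each term as a sum over $p=0,\dots,n-1$ and over compositions $j_1,\ldots,j_{p}\ge 1$, $j_{p+1}\ge 0$ with $j_1+\cdots+j_{p+1}=n-1$ of the terms $T_{(fu^p)^{[p]}}(\tilde V^{j_1},\ldots,\tilde V^{j_p})\,\tilde V^{j_{p+1}}$, each weighted by $(-1)^{n-1-p}$, where the first operator in each integral is $H+V$ (resp. $H$) and the remaining operators are $H$. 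Subtracting the two expansions, the $p\ge 1$ summands combine pairwise into exactly $\tilde R^p_{n,H,f}(V)$ as defined in \eqref{rpdef}, since the only difference between a term from the first expansion and the corresponding term from the second is the leftmost operator $H+V$ versus $H$. The $p=0$ summand is special: there $(fu^0)^{[0]}=f$ and the ``multiple operator integral of order $0$'' is just the operator $f(H+V)$ (resp. $f(H)$), with the single composition $j_1=n-1$ forced (so $\tilde V^{j_{p+1}}=\tilde V^{n-1}$); subtracting gives $(f(H+V)-f(H))\tilde V^{n-1}=\tilde R^0_{n,H,f}(V)$, matching \eqref{R0}. Collecting, $R_{n,H,f}(V)=\sum_{p=0}^{n-1}(-1)^{n-1-p}\tilde R^p_{n,H,f}(V)$, which is the claim.

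A couple of bookkeeping points need care. First, one must check that the sign $(-1)^{n-1-p}$ from Theorem \ref{thm:adding resolvents}\eqref{weights} (with $n-1$ in the role of $n$) is indeed the exponent appearing in the statement of Lemma \ref{lem:pth part of remainder}, which it is. Second, the indexing of operators: in the $p\ge 1$ terms of Theorem \ref{thm:adding resolvents}\eqref{weights} the operators are $H_0,H_{j_1},\ldots,H_{j_p}$; when all but the $0$-th of the original $H_1,\ldots,H_{n-1}$ equal $H$ and $H_0$ equals $H+V$ or $H$, this reduces to $T^{H+V,H,\ldots,H}$ or $T^{H,\ldots,H}$ as written in \eqref{rpdef}, so the translation is clean. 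Third, one should note the reindexing of the double sum: the sum over $0<j_1<\cdots<j_p\le n-1$ with the factors $\tilde V_{j_{m-1},j_m}$ of Theorem \ref{thm:adding resolvents}\eqref{weights} becomes, upon setting $j_1,\dots,j_p$ equal to the \emph{gaps} $j_m-j_{m-1}$ and $j_{p+1}=(n-1)-j_p$, the sum over compositions $j_1,\dots,j_p\ge1$, $j_{p+1}\ge0$ summing to $n-1$ with factors $\tilde V^{j_m}$ (here using that all $H_k=H$ so $\tilde V_k=\tilde V$), exactly as in \eqref{rpdef}. None of this is hard; the main (minor) obstacle is simply carrying the two expansions side by side and verifying that the subtraction matches the stated formulas term by term, in particular handling the degenerate $p=0$ case separately since there is no genuine operator integral there.
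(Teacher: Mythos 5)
Your proof is correct and takes essentially the same route as the paper: rewrite $R_{n,H,f}(V)$ as $T^{H+V,H,\ldots,H}_{f^{[n-1]}}(V,\ldots,V)-T^{H,\ldots,H}_{f^{[n-1]}}(V,\ldots,V)$ (the paper gets there via $R_n=R_{n-1}-\tfrac{1}{(n-1)!}\tfrac{d^{n-1}}{dt^{n-1}}f(H+tV)|_{t=0}$ together with \eqref{dermoi} and \eqref{remmoi}, which is the same maneuver as your Theorem~\ref{rm} plus \eqref{pf2}), then apply Theorem~\ref{thm:adding resolvents}\eqref{weights} with $n-1$ in place of $n$ to each of the two terms and subtract. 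Your extra bookkeeping remarks on signs, the composition reindexing, and the degenerate $p=0$ case are accurate and simply make explicit what the paper leaves implicit.
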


\begin{proof}\label{prop36}
Using \eqref{dermoi} and \eqref{remmoi}, we get
\begin{align}
R_{n,H,f}(V)=&R_{n-1,H,f}(V)-\frac{1}{(n-1)!}\frac{d^{n-1}}{dt^{n-1}}f(H+tV)|_{t=0}\nonumber\\
=&T^{H,H+V,H,\ldots,H}_{f^{[n-1]}}(V,\ldots,V)
-T^{H,\ldots,H}_{f^{[n-1]}}(V,\ldots,V)\label{birem}.
\end{align}
An application of Theorem \ref{thm:adding resolvents}\eqref{weights} to each of the terms in \eqref{birem} completes the proof.
\end{proof}

Firstly we show that \eqref{trfp} holds when $V$ is a finite-rank operator. This is done by establishing an analog of \eqref{trfp} for $\tilde R^p_{n,H,f}(V)$ and then extending \eqref{trfp} to $R_{n,H,f}(V)$ with help of Lemma \ref{lem:pth part of remainder}.

\begin{prop}\label{prop:SSF locally V_k}
Let $n\in\N$, $n\ge 3$, let $H$ be a self-adjoint operator in $\H$, and let $V\in\mB(\H)_{\text{sa}}$
be of finite rank. Then, for $p=0,\dots, n-1$, there exists $\breve\eta_{p}\in L^1_{\text{loc}}$ such that
\begin{align*}
\Tr(\tilde R^p_{n,H,f}(V))=\int_\R(fu^p)^{(p+1)}(x)\breve\eta_{p}(x)\,dx\,
\end{align*}
for all $f\in C^{n+1}_c$,  where $\tilde R^p_{n,H,f}$ is given by \eqref{rpdef}.
\end{prop}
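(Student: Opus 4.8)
The plan is to handle the case $p=0$ and the cases $p\geq 1$ somewhat differently, using in both cases the machinery of Lemma \ref{lem:Hahn Banach Borel measures} followed by repeated integration by parts, exactly mirroring the derivation of \eqref{bp1}--\eqref{eq:def xi} in the proof of Proposition \ref{prop:SSM with growth}. Since $V$ has finite rank, $\tilde V = V(H-iI)^{-1}$ has finite rank and in particular $\tilde V \in \S^\alpha$ for every $\alpha \in [1,\infty]$; this is what makes the traces below well defined and lets us invoke Theorem \ref{thm:PSS Higher Order} and Lemma \ref{lem:Hahn Banach Borel measures} freely with any Hölder-admissible exponents. Throughout, $f\in C_c^{n+1}$, so $(fu^p)^{(p)}\in C_0$ and the divided-difference integrands are as regular as needed.

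First I would treat $p\geq 1$. For each fixed admissible tuple $(j_1,\dots,j_{p+1})$ with $j_1,\dots,j_p\geq 1$, $j_{p+1}\geq 0$, $\sum j_i = n-1$, consider the two terms in \eqref{rpdef}. When $j_{p+1}\geq 1$, Lemma \ref{lem:Hahn Banach Borel measures} (first inequality, applied to the order-$p$ multiple operator integral against the extra factor folded in by the second inequality, or more simply: $\Tr$ of a product of a Schatten-class operator with an MOI) gives a bound $c\,\|(fu^p)^{(p)}\|_\infty$ times a product of Schatten norms of the $\tilde V^{j_i}$; the Riesz--Markov step then yields a finite complex Borel measure $\mu_{j_1,\dots,j_{p+1}}$ with $\Tr(\cdots) = \int (fu^p)^{(p)}\,d\mu_{j_1,\dots,j_{p+1}}$ for all $f\in C_0^p$, hence for $f\in C_c^{n+1}$. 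When $j_{p+1}=0$ one uses instead the cancellation: by \eqref{pf} the difference $T^{H+V,H,\dots,H}_{(fu^p)^{[p]}}(\dots) - T^{H,\dots,H}_{(fu^p)^{[p]}}(\dots)$ equals $T^{H+V,H,\dots,H}_{(fu^p)^{[p+1]}}(\tilde V,\tilde V^{j_1},\dots,\tilde V^{j_p})$, an order-$(p+1)$ MOI to which Lemma \ref{lem:Hahn Banach Borel measures} applies with integrand $(fu^p)^{(p+1)}$, producing a finite measure paired against $(fu^p)^{(p+1)}$. Either way, after summing over the finitely many tuples I get $\Tr(\tilde R^p_{n,H,f}(V)) = \int (fu^p)^{(p)}\,d\breve\mu_p$ for some finite Borel measure $\breve\mu_p$ (with the $j_{p+1}=0$ contributions already in $(fu^p)^{(p+1)}$-form and to be folded in after one integration by parts). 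Then, just as in \eqref{bp1}, I integrate by parts once: since $(fu^p)^{(p)}\in C_0$ and $f$ is compactly supported, boundary terms vanish, giving $\int (fu^p)^{(p)}\,d\breve\mu_p = -\int (fu^p)^{(p+1)}(x)\,\breve\mu_p((-\infty,x))\,dx$. Setting $\breve\eta_p(x) := \pm\breve\mu_p((-\infty,x))$ (a bounded, hence $L^1_{\text{loc}}$, function), and absorbing the $j_{p+1}=0$ measures, yields the claimed formula. For $p=0$, the term $\tilde R^0_{n,H,f}(V) = (f(H+V)-f(H))\tilde V^{n-1}$ is handled by writing $f(H+V)-f(H) = T^{H+V,H}_{f^{[1]}}(V) = R_{1,H,f}(V)$ via Theorem \ref{rm}, so $\Tr(\tilde R^0_{n,H,f}(V)) = \Tr\big(T^{H+V,H}_{f^{[1]}}(V)\,\tilde V^{n-1}\big)$, to which the second inequality of Lemma \ref{lem:Hahn Banach Borel measures} (with $B_1 = \tilde V^{n-1}\in\S^{\alpha_1}$, $B_2 = V$... — here one must be slightly careful since $V$ itself need not be in a proper Schatten ideal, so instead absorb a resolvent: $T^{H+V,H}_{f^{[1]}}(V)\tilde V^{n-1}$, and apply Theorem \ref{thm:adding resolvents}\eqref{weight} to replace $f^{[1]}(V)$ by $(fu)^{[1]}(\tilde V)$ minus an order-$0$ term) gives a finite measure $\breve\mu_0$ with $\Tr(\tilde R^0) = \int (fu)^{(1)}\,d(\text{finite measure}) + (\text{lower order})$, and one integration by parts produces $\int (fu)^{(1)}(x)\,\breve\eta_0(x)\,dx$.

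The main obstacle I anticipate is the bookkeeping around the $p=0$ case and, more generally, ensuring that every trace written down is genuinely well defined and that the repeated use of Theorem \ref{thm:adding resolvents}\eqref{weight} to trade bare factors of $V$ for $\tilde V$ (together with the resulting lower-order $f^{[n-1]}$ terms) is organized so that the final right-hand side collapses to exactly the single weight $(fu^p)^{(p+1)}$ against $\breve\eta_p$ rather than a sum over several derivative orders. In Proposition \ref{prop:SSM with growth} the analogous mismatch was resolved by the further integrations by parts in \eqref{eq:def xi}; here, because we want the weight $u^p$ tied precisely to the $p$-th block, I expect the correct move is to apply the weight-adding identity to $R_{n,H,f}(V)$ in the specific grouped form of Lemma \ref{lem:pth part of remainder} (which is exactly why that lemma is stated), so that each $\tilde R^p$ already carries the factor $(fu^p)^{[p]}$, and then only a single integration by parts per $p$ is needed. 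A secondary technical point is the choice of Hölder exponents $\alpha_i = (n-1)/j_i$ for the blocks $\tilde V^{j_i}$ (with the convention $\S^\infty = \mB(\H)$ when $j_{p+1}=0$): these lie in $(1,\infty)$ precisely when each $j_i < n-1$, and the boundary cases ($p=1$, $j_1 = n-1$) must be checked to land in the regime where Lemma \ref{lem:Hahn Banach Borel measures} applies — but since $V$ is finite rank all Schatten norms are finite, so this is a matter of citing the right instance rather than a real difficulty.
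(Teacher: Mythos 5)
Your overall plan — bound $|\Tr(\tilde R^p_{n,H,f}(V))|$ by a constant times a sup-norm of a derivative of $fu^p$, invoke Riesz--Markov, and integrate by parts once — is the right one, and for the tuples with $j_{p+1}\geq 1$ your argument matches the paper exactly. But you deviate from the paper in the cases you were worried about ($j_{p+1}=0$, and $p=0$), and the deviation introduces a genuine gap. In those cases you pass, via \eqref{pf} (or via Theorem \ref{thm:adding resolvents}\eqref{weight} for $p=0$), to an MOI of order $p+1$ whose trace is bounded by $\supnorm{(fu^p)^{(p+1)}}$ rather than $\supnorm{(fu^p)^{(p)}}$. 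Riesz--Markov then gives you only a finite Borel measure $\nu$ paired against $(fu^p)^{(p+1)}$, i.e. $\int (fu^p)^{(p+1)}\,d\nu$. There is now nothing left to integrate by parts: you have already spent the extra derivative, and a general finite Borel measure $\nu$ need not be absolutely continuous, so $\int (fu^p)^{(p+1)}\,d\nu$ cannot be rewritten as $\int (fu^p)^{(p+1)}(x)\,\breve\eta_p(x)\,dx$ with $\breve\eta_p\in L^1_{\rm loc}$. Your phrase "to be folded in after one integration by parts" is where the argument silently breaks; those contributions cannot be folded into the distribution-function density produced by the $j_{p+1}\geq1$ terms.

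The paper avoids this by treating every tuple, including $j_{p+1}=0$ and $p=0$, uniformly: it does \emph{not} apply \eqref{pf}, but instead applies Lemma \ref{lem:Hahn Banach Borel measures} to each of the two traces $\Tr(T^{H+V,H,\dots,H}_{(fu^p)^{[p]}}(\cdots)\tilde V^{j_{p+1}})$ and $\Tr(T^{H,\dots,H}_{(fu^p)^{[p]}}(\cdots)\tilde V^{j_{p+1}})$ separately, always with integrand $(fu^p)^{(p)}$. Every summand is then bounded by $c\supnorm{(fu^p)^{(p)}}$, Riesz--Markov produces one finite measure $\breve\mu_p$ with $\Tr(\tilde R^p_{n,H,f}(V))=\int (fu^p)^{(p)}\,d\breve\mu_p$, and a single integration by parts produces $\breve\eta_p(x)=-\breve\mu_p((-\infty,x))$, which is bounded and hence in $L^1_{\rm loc}$. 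In particular the $p=0$ case for $n\geq 2$ requires nothing beyond the trivial estimate $|\Tr(f(H_0)\tilde V^{n-1})|\leq\supnorm{f}\,\|\tilde V\|_{n-1}^{n-1}$; there is no need to invoke Theorem \ref{thm:adding resolvents}\eqref{weight} or to rewrite $f(H+V)-f(H)$ as a double operator integral of $V$. (The $n=1$ case is delegated to Yafaev.) Two smaller points: after \eqref{pf} the new first argument is $V$, not $\tilde V$ — producing a $\tilde V$ would require Theorem \ref{thm:adding resolvents}\eqref{weight} and would introduce yet another term; and since $V$ is finite rank here, it \emph{is} in every Schatten class, so the concern you raise about $V$ "not being in a proper Schatten ideal" does not arise in this proposition (it is relevant only in the general relative Schatten setting, which is handled later by approximation).
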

\begin{proof}

By the definition of $\tilde R^p_{n,H,f}(V)$ in Lemma~\ref{lem:pth part of remainder},
\begin{align}
\label{sumabstrt}
&|\Tr(\tilde R^p_{n,H,f}(V))|\\ \nonumber
&\leq\sum_{\substack{j_1,\ldots,j_{p}\geq1,j_{p+1}\geq0\\j_1+\ldots+j_{p+1}=n-1}}
\Big(\big|\Tr\big(T^{H,H_{j_1},H,\ldots,H}_{(fu^p)^{[p]}}(V(H+V-iI)^{-1}\tilde{V}^{j_1-1},\ldots,
\tilde{V}^{j_p})\tilde{V}^{j_{p+1}}\big)\big|\\ \nonumber
&\quad+\big|\Tr\big(T^{H,\ldots,H}_{(fu^p)^{[p]}}
(\tilde{V}^{j_1},\ldots,\tilde{V}^{j_p})\tilde{V}^{j_{p+1}}\big)\big|\Big).
\end{align}
By Lemma \ref{lem:Hahn Banach Borel measures} applied to each summand on the right hand side of \eqref{sumabstrt},
\begin{align}	
|\Tr(\tilde R^p_{n,H,f}(V))|
\leq&\sum_{\substack{j_1,\ldots,j_{p}\geq1,j_{p+1}\geq0\\
j_1+\ldots+j_{p+1}=n-1}}2c_{n,j}\supnorm{(fu^p)^{(p)}}
(1+\|V\|)\nrm{V(H-iI)^{-1}}{n-1}^{n-1}\nonumber\\
=:&\,c_n\supnorm{(fu^p)^{(p)}}(1+\|V\|)\nrm{V(H-iI)^{-1}}{n-1}^{n-1}.
\label{eq:bound R^p}
\end{align}
Hence, by the Riesz-Markov representation theorem, there exist unique Borel measures $\breve\mu_{p}$ such that
$$\|\breve\mu_{p}\|\le c_n(1+\|V\|)\nrm{V(H-iI)^{-1}}{n-1}^{n-1}$$
and $$\Tr(\tilde R^p_{n,H,f}(V))=\int(fu^p)^{(p)}\,d\breve\mu_{p}$$
for all $f\in C^{n+1}_c\subseteq \mW_n$. Hence, $\eta_{p}(x):=-\breve\mu_{p}((-\infty,x))$ is a bounded function in $L^1_{\text{loc}}(\R)$ and the proposition follows by the partial integration formula for distribution functions.
\end{proof}

Proposition \ref{prop:SSF locally V_k} will be extended from finite rank to relative Schatten class perturbations by an approximation argument. To carry out the latter we build some technical machinery below.

We need the next standard result in operator theory.

\begin{lem}\label{lem:Schatten product is continuous}
Let $\alpha,\alpha_j\in[1,\infty]$ satisfy $\tfrac{1}{\alpha_1}+\ldots+\tfrac{1}{\alpha_n}=\tfrac{1}{\alpha}$.
Denote $\L^\alpha:=(\S^\alpha,\nrm{\cdot}{\alpha})$ for $\alpha\in[1,\infty)$ and $\L^\infty:=(\mB(\H)_1,\text{so*}\,)$, where $\mB(\H)_1$ denotes the closed unit ball in $\mB(\H)$.
Then, the function $$(A_1,\ldots,A_n)\mapsto A_1\cdots A_n$$
is a continuous map from $\L^{\alpha_1}\times\cdots\times\L^{\alpha_n}$ to $\L^\alpha$.
\end{lem}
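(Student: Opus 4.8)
The plan is to prove the continuity statement of Lemma~\ref{lem:Schatten product is continuous} by a telescoping decomposition combined with the relevant H\"older-type inequalities for Schatten norms, treating the factors that live in a genuine Schatten class $\S^{\alpha_j}$ with $\alpha_j<\infty$ and those that live in the unit ball of $\mB(\H)$ with the strong operator topology separately. First I would fix sequences $A_j^{(m)}\to A_j$ in $\L^{\alpha_j}$ as $m\to\infty$, and write the difference
\begin{align*}
A_1^{(m)}\cdots A_n^{(m)}-A_1\cdots A_n
=\sum_{j=1}^n A_1^{(m)}\cdots A_{j-1}^{(m)}\,\big(A_j^{(m)}-A_j\big)\,A_{j+1}\cdots A_n.
\end{align*}
It then suffices to show that each of the $n$ terms on the right converges to $0$ in $\L^\alpha$. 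For a fixed $j$, one estimates the $j$th term: if $\alpha<\infty$, by the generalized H\"older inequality for Schatten ideals its $\S^\alpha$-norm is at most $\prod_{i<j}\tnrm{A_i^{(m)}}{\alpha_i}\cdot\tnrm{A_j^{(m)}-A_j}{\alpha_j}\cdot\prod_{i>j}\tnrm{A_i}{\alpha_i}$ (with $\|\cdot\|_\infty$ denoting the operator norm when $\alpha_i=\infty$, which is bounded by $1$ on the unit ball). Since the norms $\tnrm{A_i^{(m)}}{\alpha_i}$ are bounded uniformly in $m$ (convergent sequences are bounded, and in the $\alpha_i=\infty$ case they sit in the unit ball by definition of $\L^\infty$) and $\tnrm{A_j^{(m)}-A_j}{\alpha_j}\to 0$, the term vanishes. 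The only subtlety arises when $\alpha=\infty$, i.e.\ every $\alpha_j=\infty$: then I would instead bound the $j$th term in operator norm only when $A_j^{(m)}-A_j\to 0$ in operator norm; but here $\L^\infty$ carries the s.o.t., so convergence is only strong, not in norm.

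Consequently the main obstacle, and the point requiring care, is the case where one or more factors converge only strongly. The resolution is to argue strong convergence of the product directly: for $y\in\H$, in the telescoped $j$th term with all $\alpha_i=\infty$ one writes $A_1^{(m)}\cdots A_{j-1}^{(m)}(A_j^{(m)}-A_j)A_{j+1}\cdots A_n\,y$ and observes that $w:=A_{j+1}\cdots A_n\,y$ is a fixed vector, $(A_j^{(m)}-A_j)w\to 0$ by s.o.t.\ convergence, and the operators $A_1^{(m)}\cdots A_{j-1}^{(m)}$ have operator norm bounded by $1$, so the whole expression tends to $0$ in $\H$. Mixed cases (some factors in $\S^{\alpha_i}$ with $\alpha_i<\infty$, others in the unit ball with s.o.t.) are handled by the same telescoping: a term whose "moving" factor $A_j^{(m)}-A_j$ lies in a genuine Schatten class is controlled in $\S^\alpha$-norm as above using that the remaining factors (whether norm-bounded Schatten operators or unit-ball operators) have bounded norms; a term whose moving factor converges only strongly is controlled by first noting that, since not all exponents are infinite, $\alpha<\infty$ is impossible to leverage directly, so one instead must check that the target topology on $\L^\alpha$ is the one in which convergence holds — when $\alpha<\infty$ a strongly convergent bounded net composed against a fixed Schatten-class operator does converge in $\S^\alpha$-norm (this is the standard fact that $B_m\to B$ strongly with $\sup_m\|B_m\|<\infty$ implies $B_mK\to BK$ and $KB_m\to KB$ in $\S^\alpha$ for $K\in\S^\alpha$), which dispatches the remaining cases.

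In short, I would structure the proof as: (1) reduce to a single telescoped term by the identity above; (2) dispose of terms whose moving factor lies in a finite-exponent Schatten ideal via H\"older and uniform boundedness; (3) dispose of terms whose moving factor converges only in s.o.t.\ by the auxiliary fact that composition with a fixed Schatten-class operator turns bounded strong convergence into Schatten-norm convergence, or, in the all-infinite case, by testing against vectors directly. The technical heart is step~(3) and the bookkeeping of which topology each factor carries; everything else is a routine application of H\"older's inequality for Schatten ideals.
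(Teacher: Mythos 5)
Your telescoping decomposition is the natural starting point, and the terms whose moving factor $A_j^{(m)}-A_j$ sits in a genuine Schatten ideal are indeed disposed of by H\"older's inequality and uniform boundedness, exactly as you say. The gap is in the ``standard fact'' you invoke for the s.o.t.\ slots. You claim that if $B_m\to B$ strongly with $\sup_m\|B_m\|<\infty$ and $K\in\S^\alpha$, then both $B_mK\to BK$ and $KB_m\to KB$ in $\S^\alpha$. The first is true (approximate $K$ by a finite-rank $K_\epsilon=\sum_i\lambda_i|f_i\rangle\langle g_i|$; then $(B_m-B)K_\epsilon=\sum_i\lambda_i|(B_m-B)f_i\rangle\langle g_i|$ tends to $0$ in $\S^\alpha$, and the tail is controlled by $\sup_m\|B_m-B\|\cdot\|K-K_\epsilon\|_\alpha$). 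The second is false: on $\ell^2(\N)$ take $K=|e_1\rangle\langle e_1|$ and $B_m=(S^*)^m$, the $m$th power of the backward shift; then $B_m\to 0$ strongly with $\|B_m\|=1$, yet $KB_m=|e_1\rangle\langle e_{m+1}|$ has $\|KB_m\|_\alpha=1$ for all $m$ and all $\alpha$. The point is that $KB_m=(B_m^*K^*)^*$, so controlling it in $\S^\alpha$ really requires strong convergence of $B_m^*$, which is not implied by strong convergence of $B_m$.

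This is not a bookkeeping defect that a cleverer telescope can fix: the same example falsifies the lemma as literally stated, since with $n=2$, $\alpha_1=2$, $\alpha_2=\infty$, $A_1^{(m)}=A_1=|e_1\rangle\langle e_1|$ and $A_2^{(m)}=(S^*)^m\to0=A_2$ in $\L^\infty$ one has $A_1^{(m)}A_2^{(m)}=|e_1\rangle\langle e_{m+1}|\nrightarrow 0=A_1A_2$ in $\S^2$. The correct form of the ``standard result'' equips $\mB(\H)_1$ with the strong* operator topology (joint strong convergence of $A_m$ and $A_m^*$), under which your telescoping argument does go through because both $B_mK\to BK$ and $KB_m\to KB$ then hold. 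In the paper the lemma is applied only to the two-sided sandwich $P_kV(H-iI)^{-1}P_k$ with self-adjoint spectral projections $P_k$, for which s.o.t.\ and strong* convergence coincide, so that application is safe; but as a freestanding lemma it needs the strong* topology, and your proof inherits exactly that gap at the step where you pass from $B_mK$ to $KB_m$.
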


The following approximation of weighted perturbations is an important step in the approximation of the trace formula given by Proposition \ref{prop:SSF locally V_k}.

\begin{lem}\label{lem:approximating V by V_k}
	Let $\H$ be a Hilbert space, $H$ a self-adjoint operator in $\H$, and let $V\in \mB(\H)_{\text{sa}}$ be such that $V(H-iI)^{-1}\in\S^n$. Then, there exists a sequence $(V_k)_k\subset\mB(\H)_{\text{sa}}$ of finite-rank operators such that $(V_k)_k$ converges strongly to $V$ and such that
\begin{align}\label{eq:V_k Schatten conv}
\nrm{V_k(H-iI)^{-1}-V(H-iI)^{-1}}{n}\to0\, \text{ as } k\rightarrow\infty\,	
\end{align}
and, moreover,
\begin{align}
\label{c=1}
\norm{V_k}\leq\norm{V}\quad\text{and}
\quad\nrm{V_k(H-iI)^{-1}}{n}\leq\nrm{V(H-iI)^{-1}}{n}.
\end{align}
\end{lem}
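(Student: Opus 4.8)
The plan is to construct $V_k$ explicitly by a spectral cutoff of $H$ combined with a finite-rank truncation, and then verify the three claimed properties. Let $P_k := E_H([-k,k])$ be the spectral projection of $H$ onto the interval $[-k,k]$. First I would consider the operators $W_k := P_k V P_k$. These are self-adjoint, satisfy $\|W_k\|\le\|V\|$, and converge strongly to $V$ since $P_k\to I$ strongly and $V$ is bounded. Moreover $W_k(H-iI)^{-1} = P_k V P_k (H-iI)^{-1} = P_k V (H-iI)^{-1} P_k$ (using that $P_k$ commutes with $(H-iI)^{-1}$), so $W_k(H-iI)^{-1} = P_k \tilde V P_k$. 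Since $\tilde V\in\S^n$ and $P_k\to I$ strongly (hence $P_k$ and its adjoint converge strongly), the standard fact that two-sided multiplication by strongly convergent uniformly bounded sequences converges in $\S^n$-norm on a fixed compact operator (Lemma~\ref{lem:Schatten product is continuous}, or a direct $\epsilon$-argument approximating $\tilde V$ by finite rank) gives $\|P_k\tilde V P_k - \tilde V\|_n\to 0$. Also $\|P_k\tilde V P_k\|_n\le\|\tilde V\|_n$. So $W_k$ already satisfies strong convergence, \eqref{eq:V_k Schatten conv}, and \eqref{c=1}, but $W_k$ need not be finite rank.

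The second step is to upgrade $W_k$ to finite rank. Here I would note that $P_k\H$ need not be finite-dimensional (the spectrum of $H$ in $[-k,k]$ can have continuous part), so I cannot simply take $W_k$ itself. Instead, within $P_k\H$ I would further truncate: pick an increasing sequence of finite-rank projections $Q_{k}\le P_k$ with $Q_k\to P_k$ strongly as an auxiliary index is refined, and set $V_k := Q_k V Q_k$ after a diagonal selection. More cleanly: for each fixed $k$, since $P_k\tilde V P_k\in\S^n$, choose a finite-rank projection $Q_k\le P_k$ (large enough, depending on $k$) so that $\|Q_k P_k\tilde V P_k Q_k - P_k\tilde V P_k\|_n < 1/k$ and $\|Q_k V Q_k x - P_k V P_k x\| < 1/k$ for $x$ in a suitable finite subset of a dense sequence. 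Define $V_k := Q_k V Q_k$. Then $V_k$ is finite rank (its range lies in $Q_k\H$, finite-dimensional), self-adjoint, $\|V_k\|\le\|V\|$, and $V_k(H-iI)^{-1} = Q_k\tilde V Q_k$ since $Q_k\le P_k$ commutes with $(H-iI)^{-1}$ on the relevant subspace; hence $\|V_k(H-iI)^{-1}\|_n = \|Q_k\tilde V Q_k\|_n\le\|\tilde V\|_n$, giving \eqref{c=1}. By the triangle inequality $\|V_k(H-iI)^{-1}-\tilde V\|_n \le \|Q_k P_k\tilde V P_k Q_k - P_k\tilde V P_k\|_n + \|P_k\tilde V P_k - \tilde V\|_n \to 0$, which is \eqref{eq:V_k Schatten conv}. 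Strong convergence $V_k\to V$ follows from the diagonal choice: given $x$ and $\epsilon$, for large $k$ both $\|P_kVP_kx - Vx\|$ and $\|V_kx - P_kVP_kx\|$ are small.

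I expect the main obstacle to be bookkeeping the diagonal argument so that one gets a single sequence $(V_k)_k$ that simultaneously achieves strong convergence to $V$, $\S^n$-norm convergence of the weighted perturbations, and the two monotone bounds in \eqref{c=1} — the bounds \eqref{c=1} are the delicate point, since one must ensure the truncations do not inflate either norm, which is guaranteed precisely because $Q_k\le P_k$ and $\|Q_k\|=1$ makes $A\mapsto Q_k A Q_k$ norm-nonincreasing on both $\mB(\H)$ and $\S^n$. A secondary point requiring care is the claim that $Q_k$ commutes with $(H-iI)^{-1}$: this holds because $Q_k$ is built from the spectral projections of $H$ (one takes $Q_k$ to be a spectral projection of $H$ restricted to $[-k,k]$ onto finitely many spectral subspaces, or more simply one chooses $Q_k$ of the form $E_H(B_k)$ for a bounded Borel set $B_k$ for which $E_H(B_k)$ happens to be finite rank — if no such nontrivial $B_k$ exists because $H$ has no eigenvalues, one instead does the finite-rank truncation of $\tilde V$ directly rather than of the ambient space and rewrites $V_k$ accordingly). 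The cleanest route, which I would adopt to sidestep the commutation subtlety entirely, is: approximate $\tilde V$ in $\S^n$ by finite-rank $F_k = \tilde V R_k$ where $R_k$ is the spectral projection of $|\tilde V|$ onto $[1/k,\infty)$ (so $F_k\to\tilde V$ in $\S^n$, $\|F_k\|_n\le\|\tilde V\|_n$, $\|F_k\|\le\|\tilde V\|$), then set $V_k := F_k(H-iI)$ — but $(H-iI)$ is unbounded, so instead keep $V$ on the left and define $V_k$ via the bounded surgery $V_k := P_k V P_k$ with $P_k\to I$, accepting that finite-rankness then needs the further truncation described above; the delicate merging of these two reductions into one sequence is where the real work lies.
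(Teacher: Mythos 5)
Your first reduction $W_k := P_k V P_k$ with $P_k := E_H((-k,k))$ is exactly the paper's, and your verification that $W_k$ has all the desired properties except finite-rankness is correct. The genuine gap is in the second step: you try to upgrade $W_k$ to finite rank by compressing with finite-rank projections $Q_k \le P_k$ that commute with $(H-iI)^{-1}$, and you correctly observe that such projections need not exist (e.g., when $H$ has purely continuous spectrum, no nonzero spectral projection of $H$ has finite rank). You flag this and try two escape routes (truncating $\tilde V$ directly; a diagonal merging), but neither resolves the obstruction — your proposal honestly ends with ``the delicate merging ... is where the real work lies,'' which is precisely the missing step.

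The resolution in the paper does \emph{not} require the finite-rank projection to commute with $H$. It rests on two observations you do not make. First, $P_k V P_k$ itself lies in $\S^n$: since $(H-iI)^{-1}P_k + (I-P_k)$ is invertible (the resolvent is bounded below on $[-k,k]$), one has
\begin{align*}
P_k V P_k = P_k V (H-iI)^{-1} P_k \,\big((H-iI)^{-1}P_k + (I-P_k)\big)^{-1}\in\S^n.
\end{align*}
Second, one should take finite-rank spectral projections $E_{l}$ of the \emph{compact self-adjoint operator} $P_kVP_k$ (not of $H$) and set $V_k := E_{l_k} P_k V P_k$ with $l_k$ chosen so that $\|E_{l_k}P_kVP_k - P_kVP_k\|_n < 1/k$. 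Commutation of $E_{l_k}$ with $P_kVP_k$ gives self-adjointness of $V_k$; $E_{l_k}$ being finite rank gives finite rank; and because $P_k$ (not $E_{l_k}$) commutes with $(H-iI)^{-1}$, one has $V_k(H-iI)^{-1} = E_{l_k}\,P_k\tilde V P_k$, so $\|V_k(H-iI)^{-1}\|_n \le \|\tilde V\|_n$ follows from the one-sided contraction $E_{l_k}$, with no commutation of $E_{l_k}$ with $H$ needed. The Schatten convergence then splits as $\|(E_{l_k}-I)P_kVP_k\|_n\,\|(H-iI)^{-1}\| + \|P_k\tilde V P_k - \tilde V\|_n \to 0$. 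Your proposal, by insisting on a two-sided compression $Q_kVQ_k$ with $Q_k$ in the commutant of $H$, chases an impossible requirement; the fix is to drop symmetry of the truncation and place the finite-rank projection on one side only, which is made possible by the $\S^n$-membership of $P_kVP_k$.
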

\begin{proof}
We start with a sequence of spectral projections, denoted $$P_k:=E_H((-k,k)),$$ which by the functional calculus converges strongly to $I$.
Applying subsequently the property of orthogonal projections and standard functional calculus we obtain
\begin{align}
\label{pk}
(P_kVP_k)((H-iI)^{-1}P_k+(I-P_k))
\nonumber
&=(P_kVP_k)((H-iI)^{-1}P_k)\\
&=P_kV(H-iI)^{-1}P_k\in\S^n
\end{align}
for each $k\in\N$.
By the functional calculus, $(H-iI)^{-1}P_k+(I-P_k)$ is invertible.
Therefore, from \eqref{pk} we derive $$P_kVP_k=P_kV(H-iI)^{-1}P_k\left((H-iI)^{-1}P_k+(I-P_k)\right)^{-1}\in\S^n.$$
For a fixed $k$, by the spectral theorem of compact self-adjoint operators, there exists a sequence $(E_l)_{l=1}^\infty$ of finite-rank projections, each $E_l$ commuting with $P_kVP_k$, such that $E_lP_kVP_k$ converges to $P_kVP_k$ in $\S^n$ as $l\to\infty$. For all $k\in\N$, there exists $l_k\in\N$ such that $$\nrm{E_{l_k}P_kVP_k-P_kVP_k}{n}<1/k.$$
Define $$V_k:=E_{l_k}P_kVP_k.$$ Then $\|V_k\|\leq\|V\|$ holds, $V_k$ is self-adjoint, $V_k\to V$ strongly, and
\begin{align*}
\nrm{V_k(H-iI)^{-1}-V(H-iI)^{-1}}{n}\leq& \nrm{E_{l_k}P_kVP_k-P_kVP_k}{n}\norm{(H-iI)^{-1}}\\
&+\nrm{P_kV(H-iI)^{-1}P_k-V(H-iI)^{-1}}{n}\to0.
\end{align*}
By Lemma \ref{lem:Schatten product is continuous}, the latter expression converges to $0$ as $k\rightarrow\infty$.
The second inequality in \eqref{c=1} follows from the estimate $$\nrm{E_{l_k}P_kVP_k(H-iI)^{-1}}{n}
\leq\norm{E_{l_k}}\norm{P_k}\nrm{V(H-iI)^{-1}}{n}\norm{P_k}.$$
\end{proof}


Our approximation on the left hand side of the trace formula in Proposition \ref{prop:SSF locally V_k} is based on the next estimate.
	
\begin{lem}\label{prop:|Tr(R)|}
Let $H$ be a self-adjoint operator in $\H$, let $n\in\N$, $n\neq2$, and let $V\in \mB(\H)_{\text{sa}}$ be such that $V(H-iI)^{-1}\in\S^n$.
Let $(V_k)_k\subset\mB(\H)_{\text{sa}}$ be a sequence satisfying the assertions of Lemma \ref{lem:approximating V by V_k}.
Let $W\in \{V,V_m\}$, where $m\in\N$.
Then, given $a>0$, there exists $c_{n,H,V,a}>0$ such that
$$|\Tr (\tilde R^p_{n,H,f}(V_k)-\tilde R^p_{n,H,f}(W))|\leq c_{n,H,V,a}\supnorm{(fu^p)^{(p+1)}}\|\tilde{V}_k-\tilde{W}\|_n$$	
for all $p=0,\dots,n-1$, $k\in\N$, and $f\in C^{n+1}_c[-a,a]$, where $\tilde R^p_{n,H,f}$ is given by \eqref{rpdef}.
In addition,
\begin{align*}
\Tr(R_{2,H,f}(V_k)-R_{2,H,f}(W))=\sum_{p=0}^2\int_0^1\Tr(R_{t,t,H,W,V_k,f}^p+R_{t,H,W,V_k,f}^p)\,dt
\end{align*}
for some operators $R_{t,t,H,W,V_k,f}^p$ and $R_{t,H,W,V_k,f}^p$ satisfying
$$|\Tr(R_{t,t,H,W,V_k,f}^p+R_{t,H,W,V_k,f}^p)|\leq c_{H,V}\|(fu^p)^{(p)}\|_\infty\|\tilde{V}_k-\tilde{W}\|_2$$
for all $f\in C^3_c[-a,a]$.
\end{lem}

\begin{proof}
Let $n\geq3$. By \eqref{rpdef} in Lemma \ref{lem:pth part of remainder},
\begin{align}
\label{rvw0}
&\tilde R^p_{n,H,f}(V_k)-\tilde R^p_{n,H,f}(W)\\
\nonumber
&=\sum_{\substack{j_1,\ldots,j_{p}\geq1,j_{p+1}\geq0\\j_1+\ldots+j_{p+1}=n-1}}\!\,
\Big(T^{H,H+V_{k,j_1},H,\ldots,H}_{(fu^p)^{[p]}}(V_{k}(H+V_{k}-iI)^{-1}\tilde{V}_k^{j_1-1}, \ldots,\tilde{V}_k^{j_p})\,\tilde{V}_k^{j_{p+1}}\\ \nonumber
&\quad-T^{H,H+W_{j_1},H,\ldots,H}_{(fu^p)^{[p]}}(W(H+W-iI)^{-1}\tilde{W}^{j_1-1}, \ldots,\tilde{W}^{j_p})\,\tilde{W}^{j_{p+1}}\\
\nonumber
&\quad-T^{H,\ldots,H}_{(fu^p)^{[p]}}(\tilde{V}_k^{j_1},\ldots,\tilde{V}_k^{j_p})\tilde{V}_k^{j_{p+1}}
+T^{H,\ldots,H}_{(fu^p)^{[p]}}(\tilde{W}^{j_1}, \ldots,\tilde{W}^{j_p})\,\tilde{W}^{j_{p+1}}\Big),
\end{align}
where $V_{k,1}=V_k$, $W_1=W$ and $V_{k,j}=W_j=0$ for $j\neq1$.
Below we also use the notations $\breve{V}_k^{j}=V_k(H+V_k-iI)^{-1}\tilde{V}_k^{j-1}$ and $\breve{W}^{j}=W(H+W-iI)^{-1}\tilde{W}^{j-1}$.

Firstly we handle the summands in \eqref{rvw0} with $j_1=1$.
By \eqref{pf},
\begin{align}
\label{rvw1}
&T^{H,H+V_k,H,\ldots,H}_{(fu^p)^{[p]}}(\breve{V}_k,\tilde{V}_k^{j_2}, \ldots,\tilde{V}_k^{j_p})\,\tilde{V}_k^{j_{p+1}}
-T^{H,H+W,H,\ldots,H}_{(fu^p)^{[p]}}(\breve{V}_k,\tilde{V}_k^{j_2},\ldots,\tilde{V}_k^{j_p})\tilde{V}_k^{j_{p+1}}\\
\nonumber
&=T^{H,H+V_k,H+W,H,\ldots,H}_{(fu^p)^{[p+1]}}
(\breve{V}_k,V_k-W,\tilde{V}_k^{j_2},\ldots,\tilde{V}_k^{j_p})\tilde{V}_k^{j_{p+1}}.
\end{align}
By telescoping we obtain
\begin{align}
\label{rvw2}
&T^{H,H+W,H,\ldots,H}_{(fu^p)^{[p]}}(\breve{V}_k,\tilde{V}_k^{j_2},\ldots,\tilde{V}_k^{j_p})\tilde{V}_k^{j_{p+1}}
-T^{H,H+W,H,\ldots,H}_{(fu^p)^{[p]}}(\breve{W},\tilde{W}^{j_2},\ldots,\tilde{W}^{j_p})\tilde{W}^{j_{p+1}}\\  \nonumber
&\quad-T^{H,\ldots,H}_{(fu^p)^{[p]}}(\tilde{V}_k,\tilde{V}_k^{j_2},\ldots,\tilde{V}_k^{j_p})\tilde{V}_k^{j_{p+1}}
+T^{H,\ldots,H}_{(fu^p)^{[p]}}(\tilde{W},\tilde{W}^{j_2},\ldots,\tilde{W}^{j_p})\tilde{W}^{j_{p+1}}\\ \nonumber
=&\sum_{l=1}^{p+1}T^{H,H+W,H,\ldots,H}_{(fu^p)^{[p]}}(\breve{V}_k,\tilde{V}^{j_2}_k,\ldots,\tilde{V}_k^{j_{l-1}},\tilde{V}_k^{j_l}-\tilde{W}^{j_l},\tilde{W}^{j_{l+1}},\ldots,\tilde{W}^{j_p})\tilde{W}^{j_{p+1}}\\
\nonumber
&-\sum_{l=1}^{p+1}T^{H,\ldots,H}_{(fu^p)^{[p]}}( \tilde{V}_k,\tilde{V}_k^{j_2},\ldots,\tilde{V}_k^{j_{l-1}},\tilde{V}_k^{j_l}-\tilde{W}^{j_l},\tilde{W}^{j_{l+1}},\ldots,\tilde{W}^{j_p})\tilde{W}^{j_{p+1}}\\
\nonumber
=&\sum_{l=1}^{p+1}T^{H,H+W,H,\ldots,H}_{(fu^p)^{[p]}}(\breve{V}_k-\tilde{V}_k,\tilde{V}^{j_2}_k,\ldots,\tilde{V}_k^{j_{l-1}},\tilde{V}_k^{j_l}-\tilde{W}^{j_l},\tilde{W}^{j_{l+1}},\ldots,\tilde{W}^{j_p})\tilde{W}^{j_{p+1}}\\
\nonumber
&+\sum_{l=1}^{p+1}(T^{H,H+W,H,\ldots,H}_{(fu^p)^{[p]}}-T^{H,\ldots,H}_{(fu^p)^{[p]}})( \tilde{V}_k,\tilde{V}_k^{j_2},\ldots,\tilde{V}_k^{j_{l-1}},\tilde{V}_k^{j_l}-\tilde{W}^{j_l},\tilde{W}^{j_{l+1}},\ldots,\tilde{W}^{j_p})\tilde{W}^{j_{p+1}}.
\end{align}
Noticing that $\breve{V}^j-\tilde{V}^j=-\breve{V}^{j+1}$ and applying \eqref{pf} in the last sum in \eqref{rvw2} yields
\begin{align}
\label{rvw2b}
&T^{ H,H+W,H,\ldots,H}_{(fu^p)^{[p]}}(\breve{V}_k,\tilde{V}_k^{j_2},\ldots,\tilde{V}_k^{j_p})\tilde{V}_k^{j_{p+1}}
-T^{ H,H+W,H,\ldots,H}_{(fu^p)^{[p]}}(\breve{W},\tilde{W}^{j_2},\ldots,\tilde{W}^{j_p})\tilde{W}^{j_{p+1}}\\  \nonumber
&\quad-T^{H,\ldots,H}_{(fu^p)^{[p]}}(\tilde{V}_k,\tilde{V}_k^{j_2},\ldots,\tilde{V}_k^{j_p})\tilde{V}_k^{j_{p+1}}
+T^{H,\ldots,H}_{(fu^p)^{[p]}}(\tilde{W},\tilde{W}^{j_2},\ldots,\tilde{W}^{j_p})\tilde{W}^{j_{p+1}}\\
\nonumber
=&\sum_{l=1}^{p+1}\Big(-T^{H,H+W,H,\ldots,H}_{(fu^p)^{[p]}}(\breve{V}_k^2,\tilde{V}^{j_2}_k,\ldots,\tilde{V}_k^{j_{l-1}},\tilde{V}_k^{j_l}-\tilde{W}^{j_l},\tilde{W}^{j_{l+1}},\ldots,\tilde{W}^{j_p})\tilde{W}^{j_{p+1}}\\
\nonumber
&+T^{H,H+W,H,\ldots,H}_{(fu^p)^{[p+1]}}( \tilde{V}_k,W,\tilde{V}_k^{j_2},\ldots,\tilde{V}_k^{j_{l-1}},\tilde{V}_k^{j_l}-\tilde{W}^{j_l},\tilde{W}^{j_{l+1}},\ldots,\tilde{W}^{j_p})\tilde{W}^{j_{p+1}}\Big).
\end{align}
Secondly we handle the summands in \eqref{rvw0} with $j_1\neq1$. By telescoping we obtain
\begin{align}
\label{rvw2c}
&T^{H,\ldots,H}_{(fu^p)^{[p]}}(\breve{V}_k^{j_1},\tilde{V}_k^{j_2},\ldots,\tilde{V}_k^{j_p})\,\tilde{V}_k^{j_{p+1}} -T^{H,\ldots,H}_{(fu^p)^{[p]}}(\breve{W}^{j_1},\tilde{W}^{j_2},\ldots,\tilde{W}^{j_p})\,
\tilde{W}^{j_{p+1}}\\
\nonumber
&-T^{H,\ldots,H}_{(fu^p)^{[p]}}(\tilde{V}_k^{j_1},\tilde{V}_k^{j_2},\ldots,\tilde{V}_k^{j_p})\,\tilde{V}_k^{j_{p+1}} +T^{H,\ldots,H}_{(fu^p)^{[p]}}(\tilde{W}^{j_1},\tilde{W}^{j_2},\ldots,\tilde{W}^{j_p})\,
\tilde{ W}^{j_{p+1}}\\
\nonumber
&=\sum_{l=1}^{p+1}\Big(T^{H,\ldots,H}_{(fu^p)^{[p]}}(\breve{V}_k^{j_1},\tilde{V}^{j_2}_k,\ldots,\tilde{V}_k^{j_{l-1}},\tilde{V}_k^{j_l}-\tilde{W}^{j_l},\tilde{W}^{j_{l+1}},\ldots,\tilde{W}^{j_p})\tilde{W}^{j_{p+1}}\\
\nonumber
&\quad-T^{H,\ldots,H}_{(fu^p)^{[p]}}( \tilde{V}_k^{j_1},\tilde{V}_k^{j_2},\ldots,\tilde{V}_k^{j_{l-1}},\tilde{V}_k^{j_l}-\tilde{W}^{j_l},\tilde{W}^{j_{l+1}},\ldots,\tilde{W}^{j_p})\tilde{W}^{j_{p+1}}\Big)\\
\nonumber
&=-\sum_{l=1}^{p+1}T^{H,\ldots,H}_{(fu^p)^{[p]}}(\breve{V}_k^{j_1+1},\tilde{V}^{j_2}_k,\ldots,\tilde{V}_k^{j_{l-1}},\tilde{V}_k^{j_l}-\tilde{W}^{j_l},\tilde{W}^{j_{l+1}},\ldots,\tilde{W}^{j_p})\tilde{W}^{j_{p+1}}.
\end{align}

Combining \eqref{rvw0}, \eqref{rvw1}, \eqref{rvw2b}, and \eqref{rvw2c} yields
\begin{align}
\label{rvw3a}
&\tilde R^p_{n,H,f}(V_k)-\tilde R^p_{n,H,f}(W)\\
\nonumber
&=\sum_{\substack{j_2,\ldots,j_{p}\geq1\\j_{p+1}\geq0\\j_2+\ldots+j_{p+1}=n-2}}
\Big(T^{H,H+V_k,H+W,H,\ldots,H}_{(fu^p)^{[p+1]}}
(\breve{V}_k,V_k-W,\tilde{V}_k^{j_2},\ldots,\tilde{V}_k^{j_p})\tilde{V}_k^{j_{p+1}}\\
\nonumber
&\quad+\sum_{l=1}^{p+1}\big(T^{H,H+W,H,\ldots,H}_{(fu^p)^{[p+1]}}( \tilde{V}_k,W,\tilde{V}_k^{j_2},\ldots,\tilde{V}_k^{j_{l-1}},\tilde{V}_k^{j_l}-\tilde{W}^{j_l},\tilde{W}^{j_{l+1}},\ldots,\tilde{W}^{j_p})\tilde{W}^{j_{p+1}}\\
\nonumber
&\quad-T^{H,H+W,H,\ldots,H}_{(fu^p)^{[p]}}(\breve{V}_k^2,\tilde{V}^{j_2}_k,\ldots,\tilde{V}_k^{j_{l-1}},\tilde{V}_k^{j_l}-\tilde{W}^{j_l},\tilde{W}^{j_{l+1}},\ldots,\tilde{W}^{j_p})\tilde{W}^{j_{p+1}}\big)\Big)\\
\nonumber
&\quad-\sum_{\substack{j_2,\ldots,j_{p}\geq1\\j_1\ge 2,j_{p+1}\geq0\\j_1+\ldots+j_{p+1}=n-1}}
\sum_{l=1}^{p+1}T^{H,\ldots,H}_{(fu^p)^{[p]}}(\breve{V}_k^{j_1+1},\tilde{V}^{j_2}_k,\ldots,\tilde{V}_k^{j_{l-1}},\tilde{V}_k^{j_l}-\tilde{W}^{j_l},\tilde{W}^{j_{l+1}},\ldots,\tilde{W}^{j_p})\tilde{W}^{j_{p+1}}.
\end{align}
By \eqref{weight1} of Theorem \ref{thm:adding resolvents}\eqref{weight}, for $p\geq1$ we have
\begin{align}
\label{rvw3b}
&T^{H,H+V_k,H+W,H,\ldots,H}_{(fu^p)^{[p+1]}}
(\breve{V}_k,V_k-W,\tilde{V}_k^{j_2},\ldots,\tilde{V}_k^{j_p})\tilde{V}_k^{j_{p+1}}\\
\nonumber
&=T^{H,H+V_k,H+W,H,\ldots,H}_{(fu^{p+1})^{[p+1]}}
(\breve{V}_k,(V_k-W)(H+W-iI)^{-1},\tilde{V}_k^{j_2},\ldots,\tilde{V}_k^{j_p})\tilde{V}_k^{j_{p+1}}\\
\nonumber
&\quad-T^{H,H+V_k,H,\ldots,H}_{(fu^p)^{[p]}}
(\breve{V}_k,(V_k-W)(H+W-iI)^{-1}\tilde{V}_k^{j_2},\tilde{V}_k^{j_3},\ldots,\tilde{V}_k^{j_p})\tilde{V}_k^{j_{p+1}}
\end{align}
and
\begin{align}
\label{rvw3c}
&T^{H,H+W,H,\ldots,H}_{(fu^p)^{[p+1]}}( \tilde{V}_k,W,\tilde{V}_k^{j_2},\ldots,\tilde{V}_k^{j_{l-1}},\tilde{V}_k^{j_l}-\tilde{W}^{j_l},\tilde{W}^{j_{l+1}},\ldots,\tilde{W}^{j_p})\tilde{W}^{j_{p+1}}\\
&\nonumber
=T^{H,H+W,H,\ldots,H}_{(fu^{p+1})^{[p+1]}}( \tilde{V}_k,\tilde{W},\tilde{V}_k^{j_2},\ldots,\tilde{V}_k^{j_{l-1}},\tilde{V}_k^{j_l}-\tilde{W}^{j_l},\tilde{W}^{j_{l+1}},\ldots,\tilde{W}^{j_p})\tilde{W}^{j_{p+1}}\\
\nonumber
&\quad-T^{H,H+W,H,\ldots,H}_{(fu^p)^{[p]}}( \tilde{V}_k,\tilde{W}\tilde{V}_k^{j_2},\tilde{V}_k^{j_3},\ldots,\tilde{V}_k^{j_{l-1}},\tilde{V}_k^{j_l}-\tilde{W}^{j_l},\tilde{W}^{j_{l+1}},\ldots,\tilde{W}^{j_p})\tilde{W}^{j_{p+1}}.
\end{align}
Combining \eqref{rvw3a}--\eqref{rvw3c} yields
\begin{align}
\label{rvw3}
&\tilde R^p_{n,H,f}(V_k)-\tilde R^p_{n,H,f}(W)\\
\nonumber
&=\sum_{\substack{j_1,\ldots,j_{p}\geq1,j_{p+1}\geq0\\
j_1+\ldots+j_{p+1}=n-1}}\bigg(T^{ H,H+V_{k,j_1},H+W_{j_1},H,\ldots,H}_{(fu^{p+1})^{[p+1]}}
(\breve{V}^{j_1}_k,(V_{k,j_1}-W_{j_1})(H+W-iI)^{-1},\tilde{V}^{j_2}_k,\ldots,\tilde{V}^{j_p}_k)\tilde{V}^{j_{p+1}}_k\\
\nonumber
&\quad-T^{H,H+V_{k,j_1},H,\ldots,H}_{(fu^p)^{[p]}}(\breve{V}_k^{j_1},(V_{k,j_1}-W_{j_1})(H+W-iI)^{-1}\tilde{V}_k^{j_2},\ldots,\tilde{V}_k^{j_p})\tilde{V}_k^{j_{p+1}}\\
\nonumber
&\quad+\sum_{l=1}^{p+1}\Big(T^{ H,H+W_{j_1},H,\ldots,H}_{(fu^{p+1})^{[p+1]}}
(\tilde{V}^{j_1}_k,\tilde{W}_{j_1},\tilde{V}_k^{j_2},\ldots,\tilde{V}_k^{j_{l-1}},\tilde{V}_k^{j_l}-\tilde{W}^{j_l},
\tilde{W}^{j_{l+1}},\ldots,\tilde{W}^{j_p})\tilde{W}^{j_{p+1}}\\
\nonumber
&\quad-T^{H,H+W_{j_1},H,\ldots,H}_{(fu^{p})^{[p]}}
(\tilde{V}^{j_1}_k,\tilde W_{j_1} \tilde{V}_k^{j_2},\ldots,\tilde{V}_k^{j_{l-1}},\tilde{V}_k^{j_l}-\tilde{W}^{j_l},
\tilde{W}^{j_{l+1}},\ldots,\tilde{W}^{j_p})\tilde{W}^{j_{p+1}}\\ \nonumber
&\quad-T^{H,H+W_{j_1},H,\ldots,H}_{(fu^{p})^{[p]}}
(\breve{V}^{j_1+1}_k,\tilde{V}_k^{j_2},\ldots,\tilde{V}_k^{j_{l-1}},\tilde{V}_k^{j_l}-\tilde{W}^{j_l},
\tilde{W}^{j_{l+1}},\ldots,\tilde{W}^{j_p})\tilde{W}^{j_{p+1}}\Big)\bigg).
\end{align}

A straightforward application of the second resolvent identity implies
\begin{align*}
\nonumber
(V_k-W)(H+W-iI)^{-1}=(V_k-W)(H-iI)^{-1}(I-W(H+W-iI)^{-1}).
\end{align*}
For each $W\in\{V, V_m\}$, by the estimates \eqref{c=1} of
Lemma \ref{lem:approximating V by V_k}, we obtain
\begin{align}
\label{vfromw}
&\tnrm{\tilde{W}}{n}\leq\tnrm{\tilde{V}}{n}.
\end{align}
and
\begin{align*}
\norm{I-W(H+W-iI)^{-1}}\leq 1+\norm{V}.
\end{align*}
By the latter estimate,
\begin{align*}
\nrm{(V_k-W)(H+W-iI)^{-1}}{n}\leq(1+\norm{V})\|\tilde V_k-\tilde W\|_n.
\end{align*}
It follows from \eqref{vfromw} and the telescoping identity
$\tilde V_k^j-\tilde W^j=\sum_{i=0}^{j-1}\tilde V_k^i(\tilde V_k-\tilde W)\tilde W^{j-1-i}$ that
$$\tnrm{\tilde{V}_k^j-\tilde{W}^j}{n/j}\leq j\tnrm{\tilde{V}}{n}^{j-1}\tnrm{\tilde{V}_k-\tilde{W}}{n}.$$
Applying the latter bound
and Lemma \ref{lem:Hahn Banach Borel measures} in \eqref{rvw3} implies
\begin{align}\label{eq:diff pth remainders}
&|\Tr(\tilde R^p_{n,H,f}(V_k)-\tilde R^p_{n,H,f}(W))|\nonumber\\
&\quad\leq\sum_{\substack{j_1,\ldots,j_{p}\geq1,j_{p+1}\geq0\\j_1+\ldots+j_{p+1}=n-1}}\Big(c^1_{n,j}
\supnorm{(fu^{p+1})^{(p+1)}}+c^2_{n,j}\supnorm{(fu^p)^{(p)}}\Big)C_{n,V,H}
\tnrm{\tilde{V}_k-\tilde{W}}{n},
\end{align}
for some constants $c^1_{n,j}$ and $c^2_{n,j}$ depending only on $n$ and $j_1,\ldots,j_{p+1}$, and the constant $$C_{n,V,H}:=(1+\norm{V})^{2}\,\|\tilde{V}\|_{n}^{n-1}.$$
If $\supp f\subseteq [-a,a]$, then the fundamental theorem of calculus gives
\begin{align*}
\supnorm{(fu^{p})^{(p)}}\leq2a\supnorm{(fu^p)^{(p+1)}}.
\end{align*}
Since $(fu^{p+1})^{(p+1)}=(fu^p)^{(p+1)}u+(p+1)(fu^p)^{(p)}$, we obtain
	$$\supnorm{(fu^{p+1})^{(p+1)}}\leq(|u(a)|+2a(p+1))\supnorm{(fu^p)^{(p+1)}}.$$
Along with \eqref{eq:diff pth remainders}, the latter two inequalities yield the result for $n\geq 3$.

If $n=1$, then $p=0$ and \eqref{R0} gives $\tilde R^0_{1,H,f}(V_k)-\tilde R^0_{1,H,f}(W)=f(H+V_k)-f(H+W)$.
Hence, by Theorem \ref{dm} and the fundamental theorem of calculus,
\begin{align*}
\tilde R^0_{1,H,f}(V_k)-\tilde R^0_{1,H,f}(W)
&=\int_0^1 T^{H_t,H_t}_{f^{[1]}}(V_k-W)\,dt,
\end{align*}
where $H_t=H+W+t(V_k-W)$. By \eqref{weight1} of Theorem \ref{thm:adding resolvents}\eqref{weight} for $j=1$ applied to $T^{H_t,H_t}_{f^{[1]}}(V_k-W)$ and by continuity of the trace, we obtain
\begin{align*}
\Tr(\tilde R^0_{1,H,f}(V_k)-\tilde R^0_{1,H,f}(W))=\int_0^1 \big(&\Tr(T^{H_t,H_t}_{(fu)^{[1]}}((V_k-W)(H_t-iI)^{-1}))\\
&-\Tr(f(H_t)(V_k-W)(H_t-iI)^{-1})\big)\,dt.
\end{align*}
Noticing that
\begin{align}
\label{tvianot}
\sup_{t\in [0,1]}\|(V_k-W)(H_t-iI)^{-1}\|_1&\le(1+\|V_k-W\|)\|\tilde{V}_k-\tilde{W}\|_1\\
\nonumber
&\le(1+2\|V\|)\|\tilde{V}_k-\tilde{W}\|_1
\end{align}
and applying H\"{o}lder's inequality and
the Riesz--Markov representation theorem completes the proof of the result for $n=1$.

If $n=2$, then by Theorem \ref{dm} and the fundamental theorem of calculus,
\begin{align*}
&R_{2,H,f}(V_k)-R_{2,H,f}(W)\\
&=f(H+V_k)-f(H)-T^{H,H}_{f^{[1]}}(V_k)-(f(H+W)-f(H)-T^{H,H}_{f^{[1]}}(W))\\
&=f(H+V_k)-f(H+W)-T^{H,H}_{f^{[1]}}(V_k-W)\\
&=\int_0^1T^{H_t,H_t}_{f^{[1]}}(V_k-W)dt-\int_0^1 T^{H,H}_{f^{[1]}}(V_k-W)dt.
\end{align*}
By \eqref{pf},
\begin{align*}
&R_{2,H,f}(V_k)-R_{2,H,f}(W)\\
&=\int_0^1(T^{H_t,H_t}_{f^{[1]}}(V_k-W)-T^{H,H_t}_{f^{[1]}}(V_k-W)+T^{H,H_t}_{f^{[1]}}(V_k-W)-T^{H,H}_{f^{[1]}}(V_k-W))dt\\
&=\int_0^1(T^{H_t,H,H_t}_{f^{[2]}}(W+t(V_k-W),V_k-W)+T^{H,H_t,H}_{f^{[2]}}(V_k-W,W+t(V_k-W)))dt.
\end{align*}
By Theorem \ref{thm:adding resolvents}\eqref{weights},
\begin{align*}
&T^{H_t,H,H_t}_{f^{[2]}}(W+t(V_k-W),V_k-W)\\
&=f(H_t)(W+t(V_k-W))(H-iI)^{-1}(V_k-W)(H_t-iI)^{-1}\\
&\quad-T_{(fu)^{[1]}}^{H_t,H}\big((W+t(V_k-W))(H-iI)^{-1}\big)(V_k-W)(H_t-iI)^{-1}\\
&\quad-T_{(fu)^{[1]}}^{H_t,H_t}\big((W+t(V_k-W))(H-iI)^{-1}(V_k-W)(H_t-iI)^{-1}\big)\\
&\quad+T_{(fu^2)^{[2]}}^{H_t,H,H_t}\big((W+t(V_k-W))(H-iI)^{-1},(V_k-W)(H_t-iI)^{-1}\big)
\end{align*}
and
\begin{align*}
&T^{H,H_t,H}_{f^{[2]}}(V_k-W,W+t(V_k-W))\\
&=f(H)(V_k-W)(H_t-iI)^{-1}(W+t(V_k-W))(H-iI)^{-1}\\
&\quad-T_{(fu)^{[1]}}^{H,H_t}\big((V_k-W)(H_t-iI)^{-1}\big)(W+t(V_k-W))(H-iI)^{-1}\\
&\quad-T_{(fu)^{[1]}}^{H,H}\big((V_k-W)(H_t-iI)^{-1}(W+t(V_k-W))(H-iI)^{-1}\big)\\
&\quad+T_{(fu^2)^{[2]}}^{H,H_t,H}\big((V_k-W)(H_t-iI)^{-1},(W+t(V_k-W))(H-iI)^{-1}\big).
\end{align*}
Denote
\begin{align*}
R_{t,t,H,W,V_k,f}^0=&f(H_t)(W+t(V_k-W))(H-iI)^{-1}(V_k-W)(H_t-iI)^{-1},\\
R_{t,t,H,W,V_k,f}^1=&-T_{(fu)^{[1]}}^{H_t,H}\big((W+t(V_k-W))(H-iI)^{-1}\big)(V_k-W)(H_t-iI)^{-1}\\
&\quad-T_{(fu)^{[1]}}^{H_t,H_t}\big((W+t(V_k-W))(H-iI)^{-1}(V_k-W)(H_t-iI)^{-1}\big),\\
R_{t,t,H,W,V_k,f}^2=&T_{(fu^2)^{[2]}}^{H_t,H,H_t}\big((W+t(V_k-W))(H-iI)^{-1},(V_k-W)(H_t-iI)^{-1}\big),\\
R_{t,H,W,V_k,f}^0=&f(H)(V_k-W)(H_t-iI)^{-1}(W+t(V_k-W))(H-iI)^{-1},\\
R_{t,H,W,V_k,f}^1=&-T_{(fu)^{[1]}}^{H,H_t}\big((V_k-W)(H_t-iI)^{-1}\big)(W+t(V_k-W))(H-iI)^{-1}\\
&\quad-T_{(fu)^{[1]}}^{H,H}\big((V_k-W)(H_t-iI)^{-1}(W+t(V_k-W))(H-iI)^{-1}\big),\\
R_{t,H,W,V_k,f}^2=&T_{(fu^2)^{[2]}}^{H,H_t,H}\big((V_k-W)(H_t-iI)^{-1},(W+t(V_k-W))(H-iI)^{-1}\big).
\end{align*}
Applying continuity of $t\mapsto\Tr(R_{t,t,H,W,V_k,f}^p)$ and $t\mapsto\Tr(R_{t,H,W,V_k,f}^p)$
(see \cite[Proposition 4.3.15]{ST19}) yields
\begin{align*}
\Tr(R_{2,H,f}(V_k)-R_{2,H,f}(W))=\sum_{p=0}^2\int_0^1\Tr(R_{t,t,H,W,V_k,f}^p+R_{t,H,W,V_k,f}^p)\,dt.
\end{align*}
By Lemma \ref{lem:Hahn Banach Borel measures}, \eqref{c=1}, and an analog of \eqref{tvianot} for the Hilbert-Schmidt norm, we obtain
\begin{align*}
|\Tr(R_{t,t,H,W,V_k,f}^p+R_{t,H,W,V_k,f}^p)|
\leq c_{H,V}\|(fu^p)^{(p)}\|_\infty\|\tilde{V}_k-\tilde{W}\|_2
\end{align*}
for every $t\in[0,1]$, completing the proof of the lemma.
\end{proof}


Below we extend the result of Proposition \ref{prop:SSF locally V_k} to relative Schatten class perturbations.

\begin{proof}[Proof of Proposition \ref{prop:SSF locally}]
Let $(V_k)_k$ be a sequence provided by Lemma \ref{lem:approximating V by V_k}.
For every $p\in\{0,\dots,n-1\}$ and $k\in\N$, let $\breve\eta_{p,k}$ be a function satisfying
$$\Tr(\tilde R^p_{n,H,f}(V_k))=\int(fu^p)^{(p+1)}(x)\breve\eta_{p,k}(x)\,dx,$$
which exists by Proposition \ref{prop:SSF locally V_k}. By Lemma \ref{prop:|Tr(R)|} applied to $W=V_m$, we have
\begin{align*}
\norm{\breve\eta_{p,k}-\breve\eta_{p,m}}_{L^1((-a,a))}
&=\sup_{\substack{ f\in C^{n+1}_c[-a,a]\\
\supnorm{(fu^p)^{(p+1)}}\leq1}}|\Tr(\tilde R^p_{n,H,f}(V_k)-\tilde R^p_{n,H,f}(V_m))|\\
&\leq c_{n,H,V,a}\|\tilde{V}_k-\tilde{V}_m\|_n.
\end{align*}
By Lemma \ref{lem:approximating V by V_k}, the latter expression approaches $0$ as $k\geq m\to\infty$. Thus, $(\breve\eta_{p,k})_k$ is Cauchy in $L^1_\text{loc}(\R)$. Let $\breve\eta_p$ be its $L^1_{\text{loc}}$-limit.

Assume that $f\in C^{n+1}_c[-a,a]$. We obtain
\begin{align*}
\int_\R(fu^p)^{(p+1)}(x)\,\breve\eta_p(x)\,dx
=&\int_{\supp f}(fu^p)^{(p+1)}(x)\,\breve\eta_p(x)\,dx\\
=&\lim_{k\rightarrow\infty}\int_{\supp f}(fu^p)^{(p+1)}(x)\,\breve\eta_{p,k}(x)\,dx\\
=&\lim_{k\rightarrow\infty}\Tr(\tilde R^p_{n,H,f}(V_k)).
\end{align*}
By Lemma \ref{prop:|Tr(R)|} applied to $W=V$,
$$|\Tr (\tilde R^p_{n,H,f}(V_k)-\tilde R^p_{n,H,f}(V))|\leq c_{n,H,V,a}\supnorm{(fu^p)^{(p+1)}}\|\tilde{V}_k-\tilde{V}\|_n\,$$	
for every $k\in\N$.
Hence, by Lemma \ref{lem:approximating V by V_k},
\begin{align*}
\Tr(\tilde R^p_{n,H,f}(V))=\lim_{k\rightarrow\infty}\Tr(\tilde R^p_{n,H,f}(V_k))
=\int_\R(fu^p)^{(p+1)}(x)\,\breve\eta_p(x)\,dx,
\end{align*}
completing the proof of the result.
\end{proof}

\subsection{Absolute continuity of the spectral shift measure}

In this subsection we prove our main result for relative Schatten class perturbations.

\begin{proof}[Proof of Theorem \ref{rsmain}]
Let $f\in C_c^{n+1}$. We provide a proof in the case $n\ge 3$; the cases $n=1$ and $n=2$ can be proved completely analogously.

Applying the general Leibniz differentiation rule on the right hand side of \eqref{trfp} (see Proposition \ref{prop:SSF locally}) gives
\begin{align*}
\Tr(R_{n,H,f}(V))
&=\sum_{p=0}^{n-1}(-1)^{n-1-p}\int_\R(fu^p)^{(p+1)}(x)\,\breve\eta_p(x)\,dx.\\
&=\sum_{p=0}^{n-1}(-1)^{n-1-p}\sum_{k=0}^{p+1}\int_\R\vect{p+1}{k} f^{(k)}(x)(u^p)^{(p+1-k)}(x)\breve\eta_p(x)\,dx\,\\
&=\sum_{p=0}^{n-1}(-1)^{n-1-p}\sum_{k=1}^{p+1}\int_\R f^{(k)}(x) \vect{p+1}{k}\frac{p!}{(k-1)!}u^{k-1}(x)\breve\eta_p(x)\,dx.
\end{align*}
Integration by parts gives
\begin{align}
\nonumber
\Tr(R_{n,H,f}(V))=&\sum_{p=0}^{n-1}\int_\R f^{(p+1)}(x)\tilde{\eta}_p(x)\,dx,
\end{align}
where
\begin{align*}
\tilde{\eta}_p(t)
=\sum_{k=1}^{p+1}\frac{(-1)^{n-k}\,(p+1)!\,p!}{(p+1-k)!\,k!\,(k-1)!}\int_0^t ds_1\int_0^{s_1}ds_2\cdots\int_0^{s_{p-k}}u^{k-1}(x)\breve\eta_p(x)\,dx.
\end{align*}
Subsequent integration by parts gives
\begin{align}
\label{eta}
\nonumber&\Tr(R_{n,H,f}(V))\\
\nonumber
&=\int_\R f^{(n)}(x)\Bigg(\sum_{p=0}^{n-1}(-1)^{n-1-p}\int_0^xds_1\int_0^{s_1}ds_2\cdots
\int_0^{s_{n-p-2}}\tilde{\eta}_p(t)\,dt\Bigg)\,dx\quad\\
 &=:\int_\R f^{(n)}(x)\grave\eta_n(x)\,dx
\end{align}
for every $f\in C_c^{n+1}$.
Since $\breve{\eta}_p\in L^1_{\text{loc}}$ (see Proposition \ref{prop:SSF locally}),
we have that $\tilde{\eta}_p\in L^1_{\text{loc}}$ and, hence, $\grave\eta_n\in L^1_{\text{loc}}$.

By Proposition \ref{prop:SSM with growth}, there exists a locally finite Borel measure $\mu_n$ satisfying \eqref{mu tilde} and determined by \eqref{mu tilde} for every $f\in C_c^{n+1}$ uniquely up to an absolutely continuous measure whose density is a polynomial of degree at most $n-1$.
Combining the latter with \eqref{eta} implies
\begin{align}
\label{311}
d\mu_n(x)=\grave\eta_n(x)dx+p_{n-1}(x)dx =:\acute\eta_n(x)dx,
\end{align}
where $p_{n-1}$ is a polynomial of degree at most $n-1$. By Proposition \ref{prop:SSM with growth}, the function $\acute\eta_n:=\grave\eta_n+p_{n-1}$ satisfies \eqref{tff} for every $f\in\mW_n$. The fact that $u^{-n-\epsilon}d\mu_n$ is a finite measure translates to $\acute\eta_n\in L^1(\R,u^{-n-\epsilon}(x)dx)$.

It follows from \eqref{munfla} that
\begin{align*}
\|u^{-n-\epsilon}\,d\mu_n\|\le\|u^{-\epsilon}\|_\infty\|\nu_n\|
+\|u^{-n-\epsilon}\,\xi_n\|_1.
\end{align*}
Along with \eqref{eq:nu bound} and \eqref{eq:xi bound}, the latter implies
\begin{align*}
\|u^{-n-\epsilon}\,d\mu_n\|\le c_n(1+\|u^{-1-\epsilon}\|_1)(1+\norm{V})\nrm{V(H-iI)^{-1}}{n}^n.
\end{align*}
Since
\begin{align}
\label{312}
\int_0^1(1+x^2)^{(-1-\epsilon)/2}\,dx\le 1\quad\text{and}\quad
\int_1^\infty(1+x^2)^{(-1-\epsilon)/2}\,dx\le\int_1^\infty x^{-1-\epsilon}\,dx=\epsilon^{-1},
\end{align}
we obtain the bound
\begin{align}
\label{313}
\|u^{-n-\epsilon}\,d\mu_n\|\le c_n\,(1+\epsilon^{-1})(1+\norm{V})\nrm{V(H-iI)^{-1}}{n}^n,
\end{align}
which translates to
	$$\int_\R |\acute\eta_n(x)|\,\frac{dx}{(1+|x|)^{n+\epsilon}}\leq c_n(1+\epsilon^{-1})(1+\norm{V})\|V(H-iI)^{-1}\|_n^n.$$
We define $$\eta_n:=\Re(\acute\eta_n),$$ and obtain \eqref{eta estimate} by using $|\eta_n|\leq|\acute\eta_n|$.
As we have seen, $\acute\eta_n$ satisfies \eqref{tff} for all $f\in\mW_n$. Therefore,
\begin{align}
\label{eq:tr formula real and im part}
	\Tr(R_{n,H,f}(V))=\int_\R f^{(n)}(x)\eta_n(x)\,dx+i\int_\R f^{(n)}(x)\Im(\acute\eta_n(x))\,dx.
\end{align}
When $f\in \mW_n$ is real-valued, the left hand side of \eqref{eq:tr formula real and im part} is real, and consequently the second term on the right hand side of \eqref{eq:tr formula real and im part} vanishes. The latter implies \eqref{tff} for real-valued $f\in\mW_n$. By applying \eqref{tff} to the real-valued functions $\Re(f)$ and $\Im(f)$, we extend \eqref{tff} to all $f\in\mW_n$.

The uniqueness of $\eta_n$ satisfying \eqref{tff} up to a polynomial summand of order at most $n-1$ can be established completely analogously to the uniqueness of the measure $\mu_n$ established in Proposition \ref{prop:SSM with growth}.
\end{proof}

\section{Examples}
\label{sec4}

In this section we discuss models of noncommutative geometry and mathematical physics that satisfy the condition \eqref{vresinsn}.

\subsection{Noncommutative geometry}
\label{sec4a}
In this subsection we show that the relative Schatten class condition occurs naturally in noncommutative geometry, namely, in inner perturbations of regular locally compact spectral triples (see Definition \ref{lcst} below). Many examples, including noncommutative field theory \cite{GGISV}, satisfy the following definition.

Let $\operatorname{dom}(D)$ denote the domain of any operator $D$
and let \begin{align*}
\delta_D(T):=[|D|,T]
\end{align*}
be defined on those $T\in\mB(\H)$ for which $\delta_D(T)$ extends to a bounded operator.

\begin{defn}
\label{lcst}
A locally compact spectral triple $(\mathcal{A},\H,D)$ consists of a separable Hilbert space $\H$, a self-adjoint operator $D$ in $\H$ and a *-algebra $\mathcal{A}\subseteq B(\H)$ such that $a(\operatorname{dom}(D))\subseteq\operatorname{dom}(D)$, $[D,a]$ extends to a bounded operator,
and $a(D-iI)^{-s}\in \S^1$ for all $a\in\mathcal{A}$ and some $s\in\N$, called the summability of $(\mathcal{A},\H,D)$.
A spectral triple $(\mathcal{A},\H,D)$ is called regular if for all $a\in\mathcal{A}$, we have $a,[D,a]\in\bigcap_{k=1}^\infty \dom(\delta_D^k)$.
\end{defn}

The following result appears to be known, but nowhere explicitly proven, although a similar statement is made in \cite{SZ18}.

Let $\Omega^1_D(\mathcal{A}):=\{\sum_{j=1}^n a_j[D,b_j]: a_j,b_j\in\mathcal{A}, n\in\N\}$ denote the set of inner fluctuations \cite{CC97} or \textit{Connes' differential one-forms}.

\begin{thm}
	A regular locally compact spectral triple $(\mathcal{A},\H, D)$ of summability $s$ satisfies $V(D-iI)^{-1}\in\S^s$ for all $V\in\Omega^1_D(\mathcal{A})$.
\end{thm}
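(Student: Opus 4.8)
The plan is to prove that $V(D-iI)^{-1}\in\S^s$ for a single generator $V=a[D,b]$ with $a,b\in\mathcal{A}$; the general case in $\Omega^1_D(\mathcal{A})$ then follows by linearity since $\S^s$ is a vector space. First I would observe that, by the definition of summability, $b(D-iI)^{-s}\in\S^1$, hence $(D-iI)^{-s}b^*\in\S^1$ by taking adjoints, and by interpolation (or simply by writing $(D-iI)^{-1}b\cdot$(bounded)) one expects $b(D-iI)^{-1}\in\S^s$; I would make this precise using the fact that $\S^1\subseteq\S^s$ and a resolvent-power bookkeeping argument, or directly cite that $a(D-iI)^{-s}\in\S^1$ implies $a(D-iI)^{-1}\in\S^s$ via the factorization $a(D-iI)^{-1}=\big(a(D-iI)^{-s}\big)(D-iI)^{s-1}$ combined with boundedness considerations — more carefully, one uses that $(D-iI)^{-1}=(D-iI)^{-s}(D-iI)^{s-1}$ is \emph{not} bounded, so instead I would argue that $a(D-iI)^{-s}\in\S^1$ gives $a(D-iI)^{-s}\in\S^s$, and then ``move'' resolvent powers across the commutator.

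The key mechanism is a commutator identity: for any operator $T$ in the domain of $\delta_D$ and any $k$, one has
\begin{align*}
T(D-iI)^{-k}=(D-iI)^{-k}T+(D-iI)^{-k}\big([D,T]\text{-type terms}\big)(D-iI)^{-1}\cdots,
\end{align*}
and the regularity hypothesis $a,[D,a]\in\bigcap_k\dom(\delta_D^k)$ guarantees that all the commutators $\delta_D^j(a)$, $\delta_D^j([D,b])$ that arise are bounded operators. So the plan is: write
\begin{align*}
a[D,b](D-iI)^{-1}=a(D-iI)^{-1}[D,b]+a\big[(D-iI)^{-1},[D,b]\big]\cdot(\text{lower order}),
\end{align*}
and iterate/commute until $s$ resolvent powers sit next to a single algebra element. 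More systematically, I would prove by induction on $s$ a statement of the form: for all $c_0,\ldots,c_s$ each lying in $\mathcal{A}\cup[D,\mathcal{A}]$ (so bounded, regular), the product $c_0(D-iI)^{-1}c_1(D-iI)^{-1}\cdots c_{s-1}(D-iI)^{-1}c_s$ lies in $\S^1$; this is what one needs because after commuting one reduces $a[D,b](D-iI)^{-1}$, via $(D-iI)^{-1}=(D-iI)^{-s}(D^2+I)^{(s-1)/2}\cdot(\text{bounded unitary-ish factor})$ — actually cleaner: use $(D-iI)^{-1}=\big((D-iI)^{-1}\big)$ and insert $s-1$ extra resolvents paid for by the relative-boundedness of $D$ — to sums of such products. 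Hölder's inequality for Schatten norms, $\|X_1\cdots X_m\|_1\le\prod\|X_j\|_{p_j}$ with $\sum 1/p_j=1$, together with $c_j(D-iI)^{-1}\in\S^s$ for each $j$ (each being an algebra-type element times a single resolvent, which we reduce to the summability hypothesis), then closes the estimate: $s$ factors each in $\S^s$ multiply into $\S^1$.

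I expect the main obstacle to be the bookkeeping that turns a \emph{single} resolvent $(D-iI)^{-1}$ into the \emph{$s$} resolvents needed to invoke summability, while keeping every newly created commutator bounded. The honest way to handle this is to commute all resolvents to one side: repeatedly apply $(D-iI)^{-1}c=c(D-iI)^{-1}+(D-iI)^{-1}[c,D](D-iI)^{-1}$ (valid since $[c,D]$ is bounded for $c\in\mathcal{A}$, and bounded with all its $\delta_D$-iterates by regularity) to rewrite $a[D,b](D-iI)^{-1}$ as a finite sum $\sum_\alpha B_\alpha (D-iI)^{-m_\alpha}$ with $B_\alpha$ bounded and $m_\alpha\ge 1$; but a single resolvent power is not enough, so one instead writes $a[D,b](D-iI)^{-s}\in\S^1$ first (by commuting the $s$ resolvents of the summability hypothesis past $b$ and then past $a[D,\cdot]$, each move costing a bounded commutator, so $a[D,b](D-iI)^{-s}$ is a finite sum of bounded operators times $\S^1$ operators times resolvent powers, landing in $\S^1$), and \emph{then} concludes $a[D,b](D-iI)^{-1}\in\S^s$ by the inclusion $\S^1\cdot(D-iI)^{s-1}(D-iI)^{-1}$... — the cleanest statement being: if $X(D-iI)^{-s}\in\S^1$ and $X$ is $D$-regular then $X(D-iI)^{-1}\in\S^s$, proved by polar decomposition / interpolation between $X(D-iI)^{-s}\in\S^1$ and $X(D-iI)^{0}\in\S^\infty=\mathcal B(\H)$ after commuting $X$ through. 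I would present the regularity-driven commutator manipulations carefully as the heart of the argument and treat the Schatten interpolation/Hölder step as routine.
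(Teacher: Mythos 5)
Your overall plan — use regularity to commute resolvents past $a$ and $[D,b]$, reduce to the summability hypothesis $a(D-iI)^{-s}\in\S^1$, and finish via a polar-decomposition argument — is indeed the paper's strategy, and you correctly identify the intermediate claim one should aim for: if $X(D-iI)^{-s}\in\S^1$ and $X$ is suitably regular, then $X(D-iI)^{-1}\in\S^s$. But the proposal leaves two genuine gaps at precisely the places where the paper has to work.

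First, the regularity hypothesis is phrased in terms of $|D|$, not $D$: the derivation $\delta_D$ is $[\,|D|\,,\cdot\,]$, so what you actually get to iterate boundedly is $\delta_D^k(a)$ and $\delta_D^k([D,b])$, not $[\,[\,c,D\,],D\,]$ and so on. Your commutation identity $(D-iI)^{-1}c=c(D-iI)^{-1}+(D-iI)^{-1}[c,D](D-iI)^{-1}$ is fine for one step, but the iterated commutators it produces are with $D$, and regularity does not control those. The paper instead proves $X(|D|-iI)^{-s}=(|D|-iI)^{-s}Y$ for $X\in\bigcap_k\dom(\delta_D^k)$ by induction on the $|D|$-commutation identity, and then passes between $(|D|-iI)^{-1}$ and $(D-iI)^{-1}$ using the bounded Borel function $g(t)=(|t|-i)/(t-i)$ via the functional calculus. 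Without some device of this sort, the bookkeeping you describe does not close.

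Second, the step from $V(D-iI)^{-s}\in\S^1$ to $V(D-iI)^{-1}\in\S^s$ is the heart of the theorem, and you do not prove it. The implication is \emph{not} automatic for general bounded $V$: there is no direct Hölder or ``$\S^1\subseteq\S^s$ plus bookkeeping'' argument, as you yourself notice when the naive factorization $(D-iI)^{-1}=(D-iI)^{-s}(D-iI)^{s-1}$ fails. The interpolation route you float (Stein interpolation between $\S^\infty$ at power $0$ and $\S^1$ at power $-s$) could in principle be made to work, but it is not developed, and it is a genuinely different (and more delicate) argument than the paper's. What the paper actually does is compute $|V(D-iI)^{-1}|^{s}$ (for odd $s$, after a polar decomposition $|V(D-iI)^{-1}|=UV(D-iI)^{-1}$) or $|(D+iI)^{-1}V^*|^s$ (for even $s$) as an explicit iterated product of factors $V(D^2+I)^{-1}V^*$, and then applies the commutation-to-one-side lemma \emph{again} to rewrite the whole product as $V(D-iI)^{-s}Y$ with $Y$ bounded, landing in $\S^1$. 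That second application of the regularity-driven commutation is exactly what your proposal omits, so as written the argument does not establish the claim.
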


\begin{proof}
Let $V=\sum_{j=1}^n a_j[D,b_j]\in\Omega^1_D(\mathcal{A})$ be arbitrary and let $\delta:=\delta_D$.
For all $X\in\bigcap_{k=1}^\infty \dom(\delta^k)$ we have
\begin{align*}
X(|D|-iI)^{-1}=(|D|-iI)^{-1}X+(|D|-iI)^{-1}\delta(X)(|D|-iI)^{-1}.
\end{align*}
By induction, for all $X\in\bigcap_{k=1}^\infty\dom(\delta^k)$ there exists some $Y\in\bigcap_{k=1}^\infty\dom(\delta^k)$ such that
\begin{align}\label{eq:Commute with powers of resolvents of |D|}
X(|D|-iI)^{-s}=(|D|-iI)^{-s}Y.
\end{align}

Since $[D,b_j]\in \bigcap_{k=1}^\infty \dom(\delta^k)$ for all $j$ and since $g:\R\to\C,t\mapsto (|t|-i)/(t-i)$ is continuous and bounded, we have $g(D)\in\mB(\H)$ and there exists some $Y_j\in\mB(\H)$ such that
\begin{align*}
	V(D-iI)^{-s}&=\sum_j a_j[D,b_j](|D|-iI)^{-s}g(D)^s\\
	&=\sum_j a_j(|D|-iI)^{-s}Y_jg(D)^s=\sum_j a_j(D-iI)^{-s}g(D)^{-s}Y_jg(D)^s\in\S^1.
\end{align*}

More generally, let $X_1,\ldots,X_m\in\bigcap_{k=1}^\infty \dom(\delta^k)$, let $k_1,\ldots,k_m\in\N$ and set $k=\sum_{j=1}^m k_j$. By induction, noting that $\bigcap_{k=1}^\infty\dom(\delta^k)$ is an algebra, and applying \eqref{eq:Commute with powers of resolvents of |D|} to $s=k_j$, we obtain
	\begin{align*}
		\prod_{j=1}^m X_j(D-iI)^{-k_j}=(D-iI)^{-k}Y,
	\end{align*}
	for some $Y\in\bigcap_{k=1}^\infty \dom(\delta^k)$. If $s$ is even, we obtain
	\begin{align*}
		|(D+iI)^{-1}V^*|^{s}&=V(D^2+I)^{-1}V^*\cdots V(D^2+I)^{-1}V^*\\
		&=V(D-iI)^{-s}Y\in\S^1,
	\end{align*}
	for some $Y\in\bigcap_{k=1}^\infty\dom(\delta^k)$.
Therefore, $V(D-iI)^{-1}=((D+iI)^{-1}V^*)^*\in\S^{s}$.

If $s$ is odd, we use polar decomposition to obtain $U\in\mB(\H)$ such that $|V(D-iI)^{-1}|=UV(D-iI)^{-1}$. Hence,
\begin{align*}
		|V(D-iI)^{-1}|^{s}&=UV(D-iI)^{-1}|V(D-iI)^{-1}|^{s-1}\\
		&=UV(D^2+I)^{-1}V^*\cdots V(D^2+I)^{-1}V^*V(D-iI)^{-1}\\
		&=UV(D-iI)^{-s}Y'\in\S^1
	\end{align*}
	for some $Y'\in\bigcap_{k=1}^\infty\dom(\delta^k)$. Therefore, $V(D-iI)^{-1}\in\S^s$.
\end{proof}

\subsection{Differential operators}
\label{sec4b}
In this section we consider conditions sufficient for perturbations of Dirac and Schr\"{o}dinger operators to satisfy \eqref{vresinsn}.

Given $v\in L^\infty(\R^d)$, let $M_v$ denote the operator of multiplication by $v$, that is,
$$M_v(g):=vg,\quad g\in L^2(\R).$$
We will consider self-adjoint perturbations $V=M_v$, where $v$ is real-valued.

Let $$\Delta=\sum_{k=1}^d\frac{\partial^2}{\partial x_k^2}$$ denote the Laplacian operator densely defined in the Hilbert space $ L^2(\mathbb{R}^d)$.

For $m\geq0$, let $D_m$ denote the free massive Dirac operator defined as follows. For $d\in\mathbb{N},$ let $N(d):=2^{\lfloor (d+1)/2\rfloor}$. Let $e_k\in M_{N(d)}(\mathbb{C}),$ $0\leq k\leq d,$ be the Clifford generators, that is, self-adjoint matrices satisfying $e_k^2=I$ for $0\leq k\leq d$ and
$e_{k_1}e_{k_2}=-e_{k_2}e_{k_1}$ for $0\leq k_1,k_2\leq d,$ such that
$k_1\neq k_2.$ Let $D_k:=\frac{\partial}{i\partial x_k}$. Then, the operator
$$D_m:=e_0\otimes mI+\sum_{k=1}^de_k\otimes D_k$$
is densely defined in the Hilbert space $\mathbb{C}^{N(d)}\otimes L^2(\mathbb{R}^d).$

We note that $D_0$ is unitarily equivalent to $I\otimes D$, where $I\in M_{N(d)/N(d-1)}(\C)$ and $D$ is the usual massless Dirac operator. We also note that, in the case when $d=1$, the Dirac operator $D_0=I\otimes\frac{\partial}{i\partial x}$ can be identified with the differential operator $\frac{\partial}{i\partial x}$ in the Hilbert space $L^2(\R)$.

The Schatten class membership of the weighted resolvents below was derived in \cite[Theorem 3.3 and Remark 3.6]{S21}.
To estimate the respective Schatten norms one just needs to carefully follow the proof of the latter result.

\begin{thm}
\label{src}
Let $d\in\N$, $1\le p<\infty$. Let
$$v\in\begin{cases}\ell^p(L^2(\R^d))\cap L^\infty(\R^d) &\text{if }\; 1\le p<2\\
L^p(\R^d)\cap L^\infty(\R^d) &\text{if }\; 2\le p<\infty\end{cases}$$
be real-valued.
\begin{enumerate}[(i)]
\item \label{srci}
If $p>d$ and $m\ge 0$, then
$(I\otimes M_v)(D_m-iI)^{-1}\in\S^p$
and
\begin{align}
\label{wrd}
\|(I\otimes M_v)(D_m-iI)^{-1}\|_p\le c_{d,p}\begin{cases}\|v\|_{\ell^p(L^2)} &\text{if }\; 1\le p<2\\
\|v\|_{L^p} &\text{if }\; 2\le p<\infty.\end{cases}
\end{align}

\item \label{srcii}
If $p>\frac{d}{2}$, then
$M_v(-\Delta-iI)^{-1}\in\S^p$
and
\begin{align}
\label{wrs}
\|M_v(-\Delta-iI)^{-1}\|_p\le c_{d,p}\begin{cases}\|v\|_{\ell^p(L^2)} &\text{if }\; 1\le p<2\\
\|v\|_{L^p} &\text{if }\; 2\le p<\infty.\end{cases}
\end{align}
\end{enumerate}
\end{thm}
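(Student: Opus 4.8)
The plan is to reduce both \eqref{srci} and \eqref{srcii} to Schatten-norm bounds for operators of the form $M_v\,\varphi(-i\nabla)$, where $\varphi(-i\nabla)$ denotes the Fourier multiplier with a suitably decaying symbol $\varphi$, and then to run the classical estimates for such operators in a quantitative fashion. For \eqref{srcii} one writes $(-\Delta-iI)^{-1}=\varphi(-i\nabla)$ with $\varphi(\xi)=(|\xi|^2-i)^{-1}=\O\big((1+|\xi|)^{-2}\big)$. For \eqref{srci} one first uses the Clifford relations to compute $D_m^2=I_{N(d)}\otimes(m^2-\Delta)$, so that $(D_m-iI)^{-1}=(D_m+iI)\big(I_{N(d)}\otimes(m^2+1-\Delta)^{-1}\big)$; distributing $I\otimes M_v$ and writing $D_m+iI=e_0\otimes mI+\sum_{k=1}^d e_k\otimes D_k+iI$, the operator $(I\otimes M_v)(D_m-iI)^{-1}$ becomes a finite linear combination of terms $e_k\otimes M_v\varphi_k(-i\nabla)$ whose dominant symbols satisfy $\varphi_k(\xi)=\O\big((1+|\xi|)^{-1}\big)$, the remaining ones being $\O\big((1+|\xi|)^{-2}\big)$. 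In either case matters reduce to bounding $\|M_v\varphi(-i\nabla)\|_p$ for continuous $\varphi$ with $\varphi(\xi)=\O\big((1+|\xi|)^{-\theta}\big)$, $\theta\in\{1,2\}$; such $\varphi$ lies in $L^p(\R^d)$ precisely when $\theta p>d$, i.e.\ $p>d$ in case \eqref{srci} and $p>d/2$ in case \eqref{srcii}, which accounts for the dimensional thresholds.

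For $2\le p<\infty$ I would invoke the Kato--Seiler--Simon inequality (see \cite{Simon05}), $\|M_v\varphi(-i\nabla)\|_p\le(2\pi)^{-d/p}\|v\|_{L^p}\|\varphi\|_{L^p}$, which holds precisely on this range of $p$. Under the hypotheses one has $\varphi\in L^p(\R^d)$, so this yields \eqref{wrd} and \eqref{wrs} at once, with $c_{d,p}$ equal to the explicit constant $(2\pi)^{-d/p}\|\varphi\|_{L^p}$ (up to the harmless factor $N(d)^{1/p}$ and a sum over the finitely many terms in the Dirac case).

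The range $1\le p<2$ is where the genuine work lies, since the Kato--Seiler--Simon inequality fails there. Here I would localize in physical space: with $Q_k:=(0,1)^d+k$ for $k\in\Z^d$ and $v_k:=v\chi_{Q_k}$, the functions $v_k$ have pairwise disjoint supports, whence $M_{v_j}M_{v_k}=0$ for $j\neq k$, and therefore $(M_v\varphi(-i\nabla))^*M_v\varphi(-i\nabla)=\sum_{k\in\Z^d}(M_{v_k}\varphi(-i\nabla))^*M_{v_k}\varphi(-i\nabla)$, a sum of positive operators. Since $A\mapsto\|A\|_{p/2}^{p/2}$ is subadditive on positive operators for $p/2\le1$ and $\|K\|_p^p=\|K^*K\|_{p/2}^{p/2}$, one gets
\[
\|M_v\varphi(-i\nabla)\|_p^p\le\sum_{k\in\Z^d}\|M_{v_k}\varphi(-i\nabla)\|_p^p .
\]
It then suffices to prove the per-cube estimate $\|M_{v_k}\varphi(-i\nabla)\|_p\le c_{d,p}\|v_k\|_{L^2}$; summing the $p$-th powers reproduces exactly $\|v\|_{\ell^p(L^2(\R^d))}$ as defined in \eqref{lLdef}, giving \eqref{wrd}--\eqref{wrs} on this range. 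I expect this single-cube Schatten bound to be the main obstacle: for $p<2$ mere square-integrability of the kernel of $M_{v_k}\varphi(-i\nabla)$ is not enough, and one must genuinely exploit the smoothness and decay of $\varphi$ (equivalently, treat $M_{\chi_{Q_k}}\varphi(-i\nabla)$ as a pseudodifferential-type operator and control its singular values directly, or invoke the Birman--Solomyak refinement of the Kato--Seiler--Simon inequality); here the constraint $1\le p<2$ is mild, as it forces $d\le3$ and hence $\varphi\in L^2(\R^d)$. This quantitative per-cube estimate is essentially the content of \cite[Theorem~3.3 and Remark~3.6]{S21}, whose proof I would follow step by step; the only extra observation needed is that each of those steps is explicit, which produces the constants $c_{d,p}$.
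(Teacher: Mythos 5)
Your proposal is correct and takes essentially the same approach as the paper: the paper's ``proof'' is simply the remark that Schatten-class membership is derived in \cite[Theorem~3.3 and Remark~3.6]{S21} and that the norm bounds are obtained by carefully following that proof, and your outline (reduction to $M_v\varphi(-i\nabla)$ via $D_m^2=I\otimes(m^2-\Delta)$, Kato--Seiler--Simon for $2\le p<\infty$, physical-space localization over unit cubes combined with subadditivity of $A\mapsto\|A\|_{p/2}^{p/2}$ on positive operators for $1\le p<2$) is precisely a reconstruction of the argument in that reference. You also correctly pinpoint that the only nontrivial ingredient is the per-cube estimate, for which you cite exactly the same source the authors do.
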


\begin{remark}
\label{perturbedr}
The bounds analogous to \eqref{wrd} and \eqref{wrs} can also be established for perturbed Dirac $D_m+W$ and perturbed Schr\"{o}dinger $-\Delta+W$ operators, respectively. The respective results follow from Theorem \ref{src} and
Proposition \ref{perturbedp} below. In particular, we have the following bound for a massive Dirac operator with
electromagnetic potential in the case $p>d$:
\begin{align*}
&\Big\|(I\otimes M_v)\Big(D_m+\sum_{k=1}^{d}e_k\otimes M_{w_k}+I\otimes M_{w_{d+1}}-iI\Big)^{-1}\Big\|_p\\
&\quad\le c_{d,p}\big(1+\max_{1\le k\le d+1}\|w_k\|_{L^\infty}\big)\begin{cases}\|v\|_{\ell^p(L^2)} &\text{if }\; 1\le p<2\\
\|v\|_{L^p} &\text{if }\; 2\le p<\infty.\end{cases}
\end{align*}
The same reasoning applies to generalized Dirac operators $I_k\otimes D+W$, where $k\in\N$ and $W\in\mB(\C^k\otimes\H)_{\text{sa}}$, that are associated with almost-commutative spectral triples.
\end{remark}

\begin{prop}
\label{perturbedp}
Let $H,V$ be self-adjoint operators in $\H$ and $W\in\mB(\H)_{\text{sa}}$. Let $1\le p<\infty$ and assume that
$\|V(H-iI)^{-1}\|_p<\infty$. Then,
\begin{align*}
\|V(H+W-iI)^{-1}\|_p\le\|V(H-iI)^{-1}\|_p(1+\|W\|).
\end{align*}
\end{prop}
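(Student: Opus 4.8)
\emph{Proof proposal.} The plan is to reduce the statement to the second resolvent identity together with the elementary bound $\|(A-iI)^{-1}\|\le 1$, valid for every self-adjoint operator $A$ by the spectral theorem (since $\sup_{\lambda\in\R}|\lambda-i|^{-1}=1$). First I would note that, because $W\in\mB(\H)$ and $H,H+W$ are self-adjoint with $i$ in their resolvent sets (in particular $\dom(H)=\dom(H+W)$), the identity
\[
(H+W-iI)^{-1}=(H-iI)^{-1}-(H-iI)^{-1}W(H+W-iI)^{-1}
\]
holds as an identity of bounded operators on all of $\H$, and factoring $(H-iI)^{-1}$ out on the left gives
\[
(H+W-iI)^{-1}=(H-iI)^{-1}\big(I-W(H+W-iI)^{-1}\big).
\]

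Next I would left-multiply by $V$ and invoke the two-sided ideal property of $\S^p$ (equivalently, H\"older's inequality with exponents $p$ and $\infty$): since $V(H-iI)^{-1}\in\S^p$ by hypothesis and $I-W(H+W-iI)^{-1}\in\mB(\H)$, the operator $V(H+W-iI)^{-1}=V(H-iI)^{-1}\big(I-W(H+W-iI)^{-1}\big)$ lies in $\S^p$ with
\[
\|V(H+W-iI)^{-1}\|_p\le\|V(H-iI)^{-1}\|_p\,\big\|I-W(H+W-iI)^{-1}\big\|.
\]

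Finally I would estimate the bounded factor: self-adjointness of $H+W$ yields $\|(H+W-iI)^{-1}\|\le 1$, hence $\|W(H+W-iI)^{-1}\|\le\|W\|$ and therefore $\|I-W(H+W-iI)^{-1}\|\le 1+\|W\|$; substituting this into the previous display gives the asserted estimate. There is essentially no genuine obstacle here; the only two points deserving a word of justification are that the resolvent identity is a legitimate identity of bounded operators on $\H$ (standard, since $W$ is bounded so the two resolvents have the same domain range considerations) and that multiplication of an $\S^p$ element by a bounded operator on either side remains in $\S^p$ with the submultiplicative norm bound.
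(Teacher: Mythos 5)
Your proposal is correct and follows essentially the same route as the paper: start from the second resolvent identity, multiply by $V$ on the left, and use the Schatten ideal property together with $\|(H+W-iI)^{-1}\|\le 1$ to get the factor $1+\|W\|$. The only cosmetic difference is that you factor out $(H-iI)^{-1}$ before estimating, whereas the paper applies the triangle inequality to the two terms of the resolvent identity directly; both yield the same bound.
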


\begin{proof}
The result follows from the second resolvent identity
\begin{align*}
(H+W-iI)^{-1}=(H-iI)^{-1}-(H-iI)^{-1}W(H+W-iI)^{-1}
\end{align*}
upon multiplying it by $V$ and applying H\"{o}lder's inequality for Schatten norms.
\end{proof}


\end{document}